\colorlet{darkblue}{blue!50!black}
\colorlet{darkblue}{blue!50!black}
\renewcommand{\tocsection}[3]{%
	\indentlabel{\@ifnotempty{#2}{\bfseries\ignorespaces#1 #2\quad}}\bfseries#3}
\renewcommand{\tocsubsection}[3]{%
	\indentlabel{\@ifnotempty{#2}{\ignorespaces#1 #2\quad}}#3}
 \let\oldtocsubsubsection=\tocsubsubsection
 \renewcommand{\tocsubsubsection}[2]{\hspace{2em}\oldtocsubsubsection{#1}{#2}}
\newcommand\@dotsep{4.5}
\def\@tocline#1#2#3#4#5#6#7{\relax
	\ifnum #1>\c@tocdepth 
	\else
	\par \addpenalty\@secpenalty\addvspace{#2}%
	\begingroup \hyphenpenalty\@M
	\@ifempty{#4}{%
		\@tempdima\csname r@tocindent\number#1\endcsname\relax
	}{%
		\@tempdima#4\relax
	}%
	\parindent\z@ \leftskip#3\relax \advance\leftskip\@tempdima\relax
	\rightskip\@pnumwidth plus1em \parfillskip-\@pnumwidth
	#5\leavevmode\hskip-\@tempdima{#6}\nobreak
	\leaders\hbox{$\m@th\mkern \@dotsep mu\hbox{.}\mkern \@dotsep mu$}\hfill
	\nobreak
	\hbox to\@pnumwidth{\@tocpagenum{\ifnum#1=1\bfseries\fi#7}}\par
	\nobreak
	\endgroup
	\fi}
\renewcommand\csname r@tocindent0\endcsname{0pt}
\def\l@subsection{\@tocline{2}{0pt}{2.5pc}{5pc}{}}
\newtheorem{theorem}{Theorem}[section]
\newtheorem{lemma}[theorem]{Lemma}
\newtheorem{proposition}[theorem]{Proposition}
\newtheorem{definition}[theorem]{Definition}
\newtheorem{remark}[theorem]{Remark}
\newtheorem{hypothesis}[theorem]{Hypothesis}
\let\originalleft\left
\let\originalright\right
\renewcommand{\left}{\mathopen{}\mathclose\bgroup\originalleft}
\renewcommand{\right}{\aftergroup\egroup\originalright}
\renewcommand{\d}{\/\mathrm{d}\/}
\def\w{\textbf{W}^{\varepsilon}_{{\theta}^{\varepsilon}}}
\def\L{\mathbb{L}}
\def\A{\mathrm{A}}
\def\I{\mathrm{I}}
\def\F{\mathrm{F}}
\def\C{\mathrm{C}}
\def\f{\boldsymbol{f}}
\def\B{\mathrm{B}}
\def\D{\mathrm{D}}
\def\X{\mathbb{X}}
\def\x{\boldsymbol{x}}
\def\h{\boldsymbol{h}}
\def\z{\boldsymbol{z} }
\def\v{\boldsymbol{v}}
\def\w{\boldsymbol{w}}
\def\W{\mathrm{W}}
\def\Q{\mathrm{Q}}
\def\N{\mathbb{N}}
\def\V{\mathbb{V}}
\def\wi{\widetilde}
\def\Q{\mathrm{Q}}
\def\u{\mathrm{U}}
\def\P{\mathrm{P}}
\def\u{\boldsymbol{u}}
\def\H{\mathbb{H}}
\newcommand{\R}{\mathbb{R}}
\renewcommand{\d}{\/\mathrm{d}\/}
\newcommand{\Addresses}{{
		\footnote{
			\noindent \textsuperscript{1,3}Department of Mathematics, Indian Institute of Technology Roorkee-IIT Roorkee,
		Haridwar Highway, Roorkee, Uttarakhand 247667, INDIA.
		
		\noindent \textsuperscript{2}School of Mathematical Sciences, Guizhou Normal University, Guiyang 550001, China.
		\par\nopagebreak
		\noindent  \textit{e-mail:} \texttt{Manil T. Mohan: \href{maniltmohan@ma.iitr.ac.in}{maniltmohan@ma.iitr.ac.in}, \href{maniltmohan@gmail.com}{maniltmohan@gmail.com}.}
		
		\textit{e-mail:} \texttt{Renhai Wang: \href{rwang-math@outlook.com}{rwang-math@outlook.com}.}
		
		\textit{e-mail:} \texttt{Kush Kinra: \href{kkinra@ma.iitr.ac.in}{kkinra@ma.iitr.ac.in}.}
		
		\noindent \textsuperscript{*}Corresponding author.
			
			\textit{Key words:} Backward compact random attractor, asymptotic autonomy, stochastic Navier-Stokes equations, backward tempered set, backward asymptotic compactness, backward flattening estimate.
			
			Mathematics Subject Classification (2020): Primary 37L55; Secondary 37B55, 35B41, 35B40.

}}}
\begin{document}
	
	\title[Asymptotic autonomy  for random attractors for 2D SNSE equations]{Asymptotic Autonomy of Random Attractors in Regular Spaces for
Non-autonomous Stochastic Navier-Stokes Equations
		\Addresses}
	
	\author[K. Kinra, Renhai Wang and M. T. Mohan]
{Kush Kinra\textsuperscript{1}, Renhai Wang\textsuperscript{2} and Manil T. Mohan\textsuperscript{3*}}

	\maketitle
	
	\begin{abstract}
		This article concerns the long-term  random dynamics in regular spaces for a non-autonomous Navier-Stokes equation defined on a bounded smooth domain $\mathcal{O}$ driven by multiplicative and additive noise.  For the two kinds of noise driven equations, we demonstrate  the existence of a unique pullback attractor which is backward compact and asymptotically autonomous in $\mathbb{L}^2(\mathcal{O})$ and $\mathbb{H}_0^1(\mathcal{O})$, respectively.  The backward-uniform flattening property of the solution is used to prove the backward-uniform pullback asymptotic compactness of the non-autonomous random dynamical systems in the regular space $\mathbb{H}_0^1(\mathcal{O})$.
	\end{abstract}
	
\tableofcontents
	
	\section{Introduction} \label{sec1}\setcounter{equation}{0}
		In this work, we are interested in the  \emph{existence and asymptotic autonomy of random attractors} of the mathematical model concerning the two dimensional stochastic Navier-Stokes equations (SNSE) driven by multiplicative as well as additive noises with non-autonomous forcing term (deterministic term) defined on bounded domains. Let $\mathcal{O}$ be an open, connected and bounded subset of $\R^2$, the boundary of which is of  class $\mathrm{C}^2$.  Given $\tau\in\R$, consider the following 2D non-autonomous SNSE in $\mathcal{O}$:
		\begin{equation}\label{1}
			\left\{
			\begin{aligned}
				\frac{\partial \u}{\partial t}-\nu \Delta\u+(\u\cdot\nabla)\u+\nabla p&=\boldsymbol{f}+S(\u)\circ\frac{\d \W}{\d t}, \ \ \  \text{ in }\  \mathcal{O}\times(\tau,+\infty), \\ \nabla\cdot\u&=0, \hspace{30mm}  \text{ in } \ \ \mathcal{O}\times(\tau,+\infty), \\ \u&=0,\hspace{30mm}  \text{ in } \ \ \partial\mathcal{O}\times(\tau,+\infty), \\
				\u(x,\tau)&=\u_0(x),\hspace{23mm} x\in \mathcal{O} \ \text{ and }\ \tau\in\R,
			\end{aligned}
			\right.
		\end{equation}
		where $\u(x,t) :\mathcal{O}\times(\tau,+\infty)\to \R^2$ stands for the velocity field, $p(x,t):	\mathcal{O}\times(\tau,+\infty)\to\R$ denotes the pressure field and $\f(x,t):	\mathcal{O}\times(\tau,+\infty)\to \R^2$ is an external forcing. The constant $\nu>0$ is the \emph{kinematic viscosity} coefficient of the fluid. Here $S(\u)=\u$ (multiplicative noise) or independent of $\u$ (additive noise) is the diffusion term, the symbol $\circ$ means that the stochastic integral should be understood in the sense of Stratonovich and $\W=\W(t,\omega)$ is an one-dimensional two-sided Wiener process defined on some filtered probability space $(\Omega, \mathscr{F}, (\mathscr{F})_{t\in\R}, \mathbb{P})$ (see Subsection \ref{2.5}). The theory of global attractors for the deterministic dynamical systems is very well investigated in literature, see \cite{ICh,Robinson3,Robinson4,R.Temam} etc and references therein. For the well-posedness, global attractors and their properties (such as fractal dimension and upper semicontinuity, etc.) of 2D deterministic NSE, see \cite{CLR2,FSX,FMRT,Robinson2,R.Temam,ZD}, etc. and references therein. For the unique solvability of 2D SNSE, we refer the readers to \cite{GZ,MS,SS}, etc. and references therein. 
	
		Generally, the existence of random attractors  for stochastic systems is based on some  transformations which convert the stochastic system into a pathwise deterministic system (cf. \cite{BCF,CDF,CF,LaRo,Robinson5}, etc for the concept of random attractors and their properties for compact random dynamical systems). In \cite{SandN_Wang}, the author established  a necessary and sufficient criteria for the existence of pullback attractors for non-compact random dynamical systems. In the literature, for SNSE related models, this transformation is available only when the white noise is either linear multiplicative or additive, see  \cite{BCLLLR,CFL,FY,GLS,KM3,X.LI,LG,PeriodicWang,chenp,rwang1,rwang2,Zhang,Zhang1}, etc and references therein. In order to deal with the nonlinear diffusion term, the concept of weak pullback mean random attractors was introduced in \cite{Wang} and applied to physically relevant models (cf. \cite{KM4,Wang1,Wang3,Wang4,Wang5}, etc), where the authors assumed diffusion term to be Lipschitz nonlinear. Another approach in the direction of random attractors, when the diffusion term is nonlinear, is the Wong-Zakai approximation of random attractors, see \cite{GGW,GLW,KM7}, etc and references therein.
	
	In general, a non-autonomous random attractor carries the form
	\begin{align*}
		\mathcal{A}_{\varsigma}=\{\mathcal{A}_{\varsigma}(\tau,\omega):\tau\in\R, \omega\in\Omega\},
	\end{align*}
	 where $\varsigma$ stands for some external perturbation parameter. In the literature, the robustness with respect to the external parameter $\varsigma$ of the random attractors $\mathcal{A}_{\varsigma}(\tau,\omega)$ for SNSE have been established  (cf. \cite{HCPEK,GGW,GLW}, etc) but not for the internal parameter $\tau$. The aim of this work is not only to prove the existence a of random attractor $\mathcal{A}(\tau,\omega)$, but also the asymptotic autonomy of $\mathcal{A}(\tau,\omega)$, that is,
	 \begin{align}\label{AA}
	 	\lim_{\tau\to-\infty}\text{dist}_{\X}\left(\mathcal{A}(\tau,\omega),\mathcal{A}_{\infty}(\omega)\right)=0,\  \mathbb{P}\text{-a.s. } \omega\in\Omega.
	 \end{align}
Here, $\text{dist}_{\mathbb{X}}(\cdot,\cdot)$ denotes the Hausdorff semi-distance between two non-empty subsets of some Banach space $\mathbb{X}$, that is, for non-empty sets $A,B\subset \mathbb{X}$ $$\text{dist}_{\mathbb{X}}(A,B)=\sup_{a\in A}\inf_{b\in B} d(a,b)$$ and  $\mathcal{A}_{\infty}(\omega)$ is the random attractor of the corresponding autonomous system.

To prove the asymptotic autonomy of random attractors, we need to establish  the existence of backward compact random attractors. For this purpose, we define a backward tempered attracting universe (see \eqref{D-NSE}) which is indeed smaller than the usual tempered attracting universe (by omitting supremum from \eqref{D-NSE}). It is worth mentioning  that the radii of the absorbing set in this case is taken as the supremum over an uncountable set $(-\infty,\tau]$ (see Proposition \ref{IRAS}) which causes difficulties to show the measurability of absorbing sets. In \cite{YR}, the authors used Egoroff and Lusin theorems to deal with this difficulty (measurability of absorbing sets), see Proposition 3.1 in \cite{YR}. Moreover, the authors in \cite{YR} (Section 5) demonstrated abstract results for asymptotic autonomy of random attractors. For asymptotic autonomy of random attractors, we refer the readers to \cite{CGTW,YR,ZL}, etc. To the best of our knowledge,  there is no result available in the literature on the existence of backward compact random attractors as well as for their asymptotic autonomy for 2D SNSE driven by multiplicative noise. Whereas, for 2D SNSE driven by additive noise, the existence of backward compact random attractors in $\H$ as well as in $\V$ is known (cf. \cite{LXY}), but the asymptotic autonomy results not available. Therefore, in  this work, we  demonstrate the asymptotic autonomy of random attractors in both the spaces for additive noise case.

As discussed earlier, the backward asymptotic compactness plays a key role in our analysis. First, we establish the existence of increasing random absorbing sets in $\H$ and $\V$. We obtain the backward asymptotic compactness in $\H$ using compact Sobolev embedding $\V \subset\H$. Since the existence of random absorbing sets in $\D(\A^s)$, $s>1/2$ is not available for the 2D SNSE, the compactness arguments cannot be used to obtain the backward asymptotic compactness in $\mathbb{V}$. Therefore, in order to prove the backward asymptotic compactness in $\V$, we prove that the solution to the 2D SNSE driven by multiplicative noise satisfies the backward flattening property in $\V$. Even though, it has been established in the literature that there exists a unique random pullback attractor for SNSE in unbounded Poincar\'e domains (cf. \cite{BCLLLR,BL,PeriodicWang} etc), but the existence of backward compact random attractors and their asymptotic autonomy in unbounded domains is still an interesting open problem which will be addressed in a future work.
\vskip 0.2 cm
\noindent
\textbf{Aims and scopes of the work:} The major aims and novelties of this work are:
\begin{itemize}
	\item [(i)] We show the existence and asymptotic autonomy of random attractors for 2D SNSE  \eqref{1} driven by multiplicative noise on bounded domains in $\H$ as well as in $\V$ (Theorem \ref{MT1}).
	\item [(ii)] We show the asymptotic autonomy of random attractors for 2D SNSE \eqref{1} driven by additive noise on bounded domains under the additional Hypothesis \ref{AonH-N} in $\H$ as well as in $\V$ (Theorem \ref{MT1-N}).
\end{itemize}

The arrangement of the further sections is as follows: In the next section, we explain the necessary functions spaces, and the linear and bilinear operators along with their properties which are required for the analysis of this work. Then we provide an  abstract formulation of \eqref{1} using the linear and bilinear operators. In the same section, we discuss the Ornstein-Uhlenbeck process along with its properties which help us to transform the stochastic system \eqref{1} into a pathwise deterministic system. In section \ref{sec3}, we consider the 2D SNSE \eqref{1} driven by multiplicative noise and convert it into a pathwise deterministic system \eqref{CNSE-M} with the help of Ornstein-Uhlenbeck process. In order to apply the abstract results established in \cite{YR}, one needs to verify all the assumptions of Theorem \ref{Abstract-result}. For that purpose, we first prove the Lusin continuity (Lemma \ref{LusinC}), convergence of non-autonomous random dynamical system (RDS) to the corresponding autonomous RDS as $\tau\to-\infty$ in $\H$ (Proposition \ref{Back_conver}), increasing random absorbing set in $\H$ (Proposition \ref{IRAS}) and backward asymptotic compactness (using compact Sobolev embeddings). The above results altogether demonstrates the first main result of  section \ref{sec3} (Theorem \ref{MT1}) using the abstract result established in \cite{YR}, that is, the existence and asymptotic autonomy of random attractors for 2D SNSE driven by multiplicative noise in $\H$. Applying similar arguments, we demonstrate the existence and asymptotic autonomy of random attractors for 2D SNSE driven by multiplicative noise in $\V$ (Theorem \ref{MT1-V}), where we use the backward flattening property to prove the backward asymptotic compactness. In the  final section, we consider the 2D SNSE \eqref{1} driven by additive noise under the extra Hypothesis \ref{AonH-N}. Since the existence of backward compact random attractors in $\H$ as well as $\V$ is established in \cite{LXY}, we only prove the asymptotic autonomy of random attractors in both spaces. Therefore, we prove the convergence of non-autonomous RDS to the corresponding autonomous RDS as $\tau\to-\infty$ in $\H$ and $\V$ (Propositions \ref{Back_conver-N} and \ref{Back_conver-VA}, respectively) and using the abstract result established in \cite{YR}, we establish the main result of section \ref{sec4} (Theorem \ref{MT1-N}).

		\section{Mathematical formulation}\label{sec2}\setcounter{equation}{0}
	This section is devoted for providing the necessary function spaces needed to obtain the results of this work. Further, we define linear and bilinear operators to obtain abstract formulation of the system \eqref{1}. Finally, we discuss the Ornstein-Uhlenbeck process, its properties and the backward tempered random set.
	\subsection{Function spaces and operators}
	Let us define the space $$\mathcal{V}:=\{\u\in\C_0^{\infty}(\mathcal{O};\R^2):\nabla\cdot\u=0\},$$ where $\C_0^{\infty}(\mathcal{O};\R^2)$ denote the space of all infinitely differentiable functions  ($\R^2$-valued) with compact support in $\mathcal{O}$. Let $\H$, $\V$ and $\wi\L^p$, for $p\in(2,+\infty)$ denote the completion of $\mathcal{V}$ in 	$\mathrm{L}^2(\mathcal{O};\R^2)$, $\mathrm{H}_0^1(\mathcal{O};\R^2)$ and $\mathrm{L}^p(\mathcal{O};\R^2)$ norms, respectively. The spaces  $\H$, $\V$ and $\wi\L^p$  are endowed with the norms $\|\u\|_{\H}^2:=\int_{\mathcal{O}}|\u(x)|^2\d x$, $\|\u\|_{\V}^2:=\int_{\mathcal{O}}|\nabla\u(x)|^2\d x$ and  $\|\u\|_{\wi \L^p}^2:=\int_{\mathcal{O}}|\u(x)|^p\d x,$ respectively. The induced duality between the spaces $\V$ and $\V'$, and $\widetilde{\L}^p$ and its dual $\widetilde{\L}^{\frac{p}{p-1}}$ is denoted by $\langle\cdot,\cdot\rangle.$ Moreover, we have the following continuous  embedding also:
	$$\V\hookrightarrow\H\equiv\H'\hookrightarrow\V'.$$	
		\subsubsection{Linear operator}\label{LO}
	Let $\mathcal{P}: \L^2(\mathcal{O}) \to\H$ denote the Helmholtz-Hodge orthogonal projection (cf. \cite{OAL}). Let us define the Stokes operator
	\begin{equation*}
		\A\u:=-\mathcal{P}\Delta\u,\;\u\in\D(\A).
	\end{equation*}
	The operator $\A$ is a linear continuous operator from $\V$ into $\V'$, satisfying
	\begin{equation*}
		\langle\A\u,\v\rangle=(\!(\u,\v)\!), \ \ \ \u,\v\in\V.
	\end{equation*}
	Since the boundary of $\mathcal{O}$ is of class $\mathrm{C}^2$, this infer that $\D(\A)=\V\cap\H^2(\mathcal{O})$ and $\|\A\u\|_{\H}$ defines a norm in $\D(\A),$ which is equivalent to the one in $\H^2(\mathcal{O})$ (cf. Cattabriga regularity theorem, \cite{Temam2}). Note that the operator $\A$ is a non-negative self-adjoint operator in $\H$ and
	\begin{align}\label{2.7a}
		\langle\A\u,\u\rangle =\|\u\|_{\V}^2,\ \textrm{ for all }\ \u\in\V, \ \text{ so that }\ \|\A\u\|_{\V'}\leq \|\u\|_{\V}.
	\end{align}
For the bounded domain $\mathcal{O}$, we also have (Poincar\'e inequality)
\begin{align}\label{poin}
	\lambda_1\|\u\|_{\mathbb{H}}^2\leq\|\u\|_{\mathbb{V}}^2,  \text{ for all }  \u\in\V,
\end{align}where $\lambda_1$ is the smallest eigenvalue of operator $\A$.
	\subsubsection{Bilinear operator}\label{BO}
	Let us define the \emph{trilinear form} $b(\cdot,\cdot,\cdot):\V\times\V\times\V\to\R$ by $$b(\u,\v,\w)=\int_{\mathcal{O}}(\u(x)\cdot\nabla)\v(x)\cdot\w(x)\d x=\sum_{i,j=1}^2\int_{\mathcal{O}}\u_i(x)\frac{\partial \v_j(x)}{\partial x_i}\w_j(x)\d x.$$ If $\u, \v$ are such that the linear map $b(\u, \v, \cdot) $ is continuous on $\V$, the corresponding element of $\V'$ is denoted by $\B(\u, \v)$. We also denote $\B(\u) = \B(\u, \u)=\mathcal{P}[(\u\cdot\nabla)\u]$.	An integration by parts gives
	\begin{equation}\label{b0}
		\left\{
		\begin{aligned}
			b(\u,\v,\v) &= 0,\ \text{ for all }\ \u,\v \in\V,\\
			b(\u,\v,\w) &=  -b(\u,\w,\v),\ \text{ for all }\ \u,\v,\w\in \V.
		\end{aligned}
		\right.\end{equation}
	\begin{remark}
		The following estimates on the trilinear form $b(\cdot,\cdot,\cdot)$ is used in the sequel $($see Chapter 2, section 2.3, \cite{Temam1}$):$
			\begin{itemize}
			\item [$\bullet$] for all $\u, \v, \w\in \V$,
			\begin{align}
				|b(\u,\v,\w)|&\leq
				C\|\u\|^{1/2}_{\H}\|\u\|^{1/2}_{\V}\|\v\|_{\V}\|\w\|^{1/2}_{\H}\|\w\|^{1/2}_{\V}.   \label{b1}
			\end{align}
			\item  [$\bullet$] for all $\u_1\in \V, \u_2\in \D(\A), \u_3\in \H$, we have
			\begin{align}
				|b(\u_1,\u_2,\u_3)|&\leq C\|\u_1\|^{1/2}_{\H}\|\u_1\|^{1/2}_{\V}\|\u_2\|^{1/2}_{\V}\|\A\u_2\|^{1/2}_{\H}\|\u_3\|_{\H}.\label{b2}
			\end{align}
		\end{itemize}
	\end{remark}
	\begin{remark}
		Note that $\langle\B(\u,\u-\v),\u-\v\rangle=0$, which  implies that
		\begin{equation}\label{441}
			\begin{aligned}
				\langle \B(\u)-\B(\v),\u-\v\rangle =\langle\B(\u-\v,\v),\u-\v\rangle=-\langle\B(\u-\v,\u-\v),\v\rangle.
			\end{aligned}
		\end{equation}
	\end{remark}
\subsection{Abstract formulation and Ornstein-Uhlenbeck process}\label{2.5}
Taking the projection $\mathcal{P}$ on SNSE equations \eqref{1}, we write
\begin{equation}\label{SNSE}
	\left\{
	\begin{aligned}
		\frac{\d\u(t)}{\d t}+\nu \A\u(t)+\B(\u(t))&=\f(t) +S(\u(t))\circ\frac{\d \W(t)}{\d t} , \\
		\u(x,\tau)&=\u_{0}(x),\ \ \	x\in \mathcal{O},
	\end{aligned}
	\right.
\end{equation}
	where $S(\u)=\u$ or independent of $\u$ (for simplicity of notations, we denoted $\mathcal{P}S(\u)$ as $S(\u)$ and $\mathcal{P}\f$ as $\f$). Here, $\W(t,\omega)$ is the standard scalar Wiener process on the probability space $(\Omega, \mathscr{F}, \mathbb{P}),$ where $$\Omega=\{\omega\in C(\R;\R):\omega(0)=0\},$$ endowed with the compact-open topology given by the complete metric
\begin{align*}
	d_{\Omega}(\omega,\omega'):=\sum_{m=1}^{\infty} \frac{1}{2^m}\frac{\|\omega-\omega'\|_{m}}{1+\|\omega-\omega'\|_{m}},\text{ where } \|\omega-\omega'\|_{m}:=\sup_{-m\leq t\leq m} |\omega(t)-\omega'(t)|,
\end{align*}
and $\mathscr{F}$ is the Borel sigma-algebra induced by the compact-open topology of $(\Omega,d_{\Omega}),$ $\mathbb{P}$ is the two-sided Wiener measure on $(\Omega,\mathscr{F})$. From \cite{FS}, it is clear that  the measure $\mathbb{P}$ is ergodic and invariant under the translation-operator group $\{\theta_t\}_{t\in\R}$ on $\Omega$ defined by
\begin{align*}
	\theta_t \omega(\cdot) = \omega(\cdot+t)-\omega(t), \ \text{ for all }\ t\in\R, \ \omega\in \Omega.
\end{align*}
The operator $\theta(\cdot)$ is known as \emph{Wiener shift operator}. Moreover, the quadruple $(\Omega,\mathscr{F},\mathbb{P},\theta)$ defines a metric dynamical system, see \cite{Arnold,BCLLLR}.
\subsubsection{Ornstein-Uhlenbeck process}
Consider for some $\sigma>0$ (which will be specified later)
\begin{align}\label{OU1}
	z(\theta_{t}\omega) =  \int_{-\infty}^{t} e^{-\sigma(t-\xi)}\d \W(\xi), \ \ \omega\in \Omega,
\end{align} which is the stationary solution of the one dimensional Ornstein-Uhlenbeck equation
\begin{align}\label{OU2}
	\d z(\theta_t\omega) + \sigma z(\theta_t\omega)\d t =\d\W(t).
\end{align}
It is known from \cite{FAN} that there exists a $\theta$-invariant subset $\widetilde{\Omega}\subset\Omega$ of full measure such that $z(\theta_t\omega)$ is continuous in $t$ for every $\omega\in \widetilde{\Omega},$ and
\begin{align}
	\mathbb{E}\left(|z(\theta_s\omega)|^{\xi}\right)&=\frac{\Gamma\left(\frac{1+\xi}{2}\right)}{\sqrt{\pi\sigma^{\xi}}}, \ \text{ for all } \xi>0, s\in \R,\label{Z2}\\
	\lim_{t\to \pm \infty} \frac{|z(\theta_t\omega)|}{|t|}&=0,\label{Z3}\\
	\lim_{t\to \pm \infty} \frac{1}{t} \int_{0}^{t} z(\theta_{\xi}\omega)\d\xi &=0,\label{Z4}\\
	\lim_{t\to +\infty} e^{-\delta t}|z(\theta_{-t}\omega)| &=0, \ \text{ for all } \ \delta>0,\label{Z5}
\end{align} where $\Gamma$ is the Gamma function. For further analysis of this work, we do not distinguish between $\widetilde{\Omega}$ and $\Omega$.

Since, $\omega(\cdot)$ has sub-exponential growth  (cf. Lemma 11, \cite{CGSV}), $\Omega$ can be written as $\Omega=\cup_{N\in\N}\Omega_{N}$, where
\begin{align*}
	\Omega_{N}:=\{\omega\in\Omega:|\omega(t)|\leq Ne^{|t|},\text{ for all }t\in\R\}, \text{ for all } \ N\in\N.
\end{align*}
\begin{lemma}\label{conv_z}
	For each $N\in\N$, suppose $\omega_k,\omega_0\in\Omega_{N}$ such that $d_{\Omega}(\omega_k,\omega_0)\to0$ as $k\to+\infty$. Then, for each $\tau\in\R$ and $T\in\R^+$ ,
	\begin{align}
		\sup_{t\in[\tau,\tau+T]}&\bigg[|z(\theta_{t}\omega_k)-z(\theta_{t}\omega_0)|+|e^{ z(\theta_{t}\omega_k)}-e^{ z(\theta_{t}\omega_0)}|\bigg]\to 0 \ \text{ as } \ k\to+\infty,\nonumber\\
		\sup_{k\in\N}\sup_{t\in[\tau,\tau+T]}&|z(\theta_{t}\omega_k)|\leq C(\tau,T,\omega_0).\label{conv_z2}
	\end{align}
\end{lemma}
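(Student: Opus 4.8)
The plan is to reduce the statement to the standard pathwise representation of the Ornstein--Uhlenbeck process and then run a tail/main-part splitting argument. First I would recall that integrating \eqref{OU1} by parts, and using that every path in $\Omega_N$ grows at most exponentially while $\sigma$ is taken large enough (in particular $\sigma>1$) for the boundary term at $-\infty$ to vanish and the integral to converge, one obtains for all $\omega\in\Omega_N$ and $t\in\R$
\[
	z(\theta_t\omega)=-\sigma\int_{-\infty}^{0}e^{\sigma s}\bigl(\omega(t+s)-\omega(t)\bigr)\,\d s.
\]
With this in hand the second assertion is immediate: for $t\in[\tau,\tau+T]$ one has $|t|\le|\tau|+T$, hence $|\omega(t)|\le Ne^{|\tau|+T}$ and $|\omega(t+s)|\le Ne^{|\tau|+T}e^{-s}$ for $s\le0$; substituting and evaluating the elementary integral gives a bound of the form $\sigma Ne^{|\tau|+T}\bigl(\frac1{\sigma-1}+\frac1\sigma\bigr)$, which is independent of $k$ precisely because all the $\omega_k$ lie in the \emph{same} $\Omega_N$, and this is the desired $C(\tau,T,\omega_0)$.

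For the first assertion I would subtract the two representations,
\[
	z(\theta_t\omega_k)-z(\theta_t\omega_0)=-\sigma\int_{-\infty}^{0}e^{\sigma s}\Bigl[\bigl(\omega_k(t+s)-\omega_k(t)\bigr)-\bigl(\omega_0(t+s)-\omega_0(t)\bigr)\Bigr]\,\d s,
\]
and split the $s$-integral at a cut-off $-M<0$. On $(-\infty,-M]$ the integrand is dominated, uniformly in $k$ and in $t\in[\tau,\tau+T]$, by $2\sigma Ne^{|\tau|+T}e^{\sigma s}(e^{-s}+1)$, whose integral over $(-\infty,-M]$ tends to $0$ as $M\to\infty$; so this contribution is $<\varepsilon/2$ for $M$ large, uniformly in $k$ and $t$. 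On $[-M,0]$, fixing $m\in\N$ with $[\tau-M,\tau+T]\subset[-m,m]$, each square bracket is bounded by $2\|\omega_k-\omega_0\|_m$, so this contribution is $\le2\|\omega_k-\omega_0\|_m$; and since $d_\Omega(\omega_k,\omega_0)\to0$ forces $\|\omega_k-\omega_0\|_m\to0$ for every fixed $m$, it is $<\varepsilon/2$ for $k$ large. This gives $\sup_{t\in[\tau,\tau+T]}|z(\theta_t\omega_k)-z(\theta_t\omega_0)|\to0$. The exponential term is then treated by the elementary inequality $|e^a-e^b|\le e^{\max(|a|,|b|)}|a-b|$ together with the uniform bound $C(\tau,T,\omega_0)$ from the previous step, which upgrades the convergence just obtained to $\sup_{t\in[\tau,\tau+T]}|e^{z(\theta_t\omega_k)}-e^{z(\theta_t\omega_0)}|\to0$.

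I do not anticipate a deep obstacle: the only delicate point is uniformity. Every estimate must be uniform in $t\in[\tau,\tau+T]$ so that it survives passing to the supremum, and — crucially — the tail estimate must be uniform in $k$, which is exactly why the hypothesis places all the $\omega_k$ (and $\omega_0$) in one fixed $\Omega_N$: this makes the at-most-exponential growth bound, and hence the tail control, independent of $k$. The remaining care is in justifying the integration-by-parts representation and the vanishing of the boundary term at $-\infty$ on $\Omega_N$, which rests on the exponential growth bound of the paths and on $\sigma$ being fixed large enough.
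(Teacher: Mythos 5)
Your argument is correct, but note that the paper does not actually prove this lemma: it simply points to Corollary 22 of \cite{CLL} and Lemma 2.5 of \cite{YR}. What you have written is precisely the standard argument underlying those citations --- the pathwise representation $z(\theta_t\omega)=-\sigma\int_{-\infty}^{0}e^{\sigma s}\left(\omega(t+s)-\omega(t)\right)\d s$ obtained by integration by parts, the uniform bound on $\Omega_N$ from the exponential growth of the paths, the tail/main splitting at $-M$ with the tail controlled uniformly in $k$ and $t$ and the main part controlled by $\|\omega_k-\omega_0\|_m$ (which tends to $0$ since $d_\Omega$ metrizes local uniform convergence), and the mean value theorem for the exponential term. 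All the uniformity issues you flag are exactly the right ones. The one point to be careful about is your standing assumption $\sigma>1$: with the paper's definition $\Omega_N=\{\omega:|\omega(t)|\leq Ne^{|t|}\}$, the integrals $\int_{-\infty}^{0}e^{(\sigma-1)s}\d s$ in your tail and uniform-bound estimates converge only when $\sigma>1$, whereas the paper introduces $\sigma$ merely as ``some $\sigma>0$ to be specified later'' and never pins it above $1$ in the multiplicative-noise section. This is not a fatal gap --- the standard remedy is to define $\Omega_N$ with weight $e^{\epsilon|t|}$ for some $\epsilon<\sigma$ (sub-exponential growth of the paths permits any $\epsilon>0$), after which your estimates go through verbatim with $\sigma-1$ replaced by $\sigma-\epsilon$ --- but as written your proof covers only the case $\sigma>1$ and you should either record that restriction or adjust the definition of $\Omega_N$ accordingly.
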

\begin{proof}
	See Corollary 22 and Lemma 2.5 in \cite{CLL} and \cite{YR}, respectively.
\end{proof}
\subsubsection{Backward tempered random set}{\cite{YR}}
A bi-parametric set $\mathcal{D}=\{\mathcal{D}(\tau,\omega)\}$ in a Banach space $\X$ is said to be \emph{backward tempered} if
\begin{align}\label{BackTem}
	\lim_{t\to +\infty}e^{-ct}\sup_{s\leq \tau}\|\mathcal{D}(s-t,\theta_{-t}\omega)\|^2_{\X}=0\ \text{ for all }\  (\tau,\omega)\in\R\times\Omega,
\end{align}
where $c>0$ is a constant and $\|\mathcal{D}\|_{\X}=\sup\limits_{\x\in \mathcal{D}}\|\x\|_{\X}.$
\subsubsection{Class of random sets}
\begin{itemize}
	\item Let ${\mathfrak{D}}$ be the collection of all backward tempered subsets of $\H$, that is,
	\begin{align}\label{D-NSE}
		{\mathfrak{D}}=\left\{{\mathcal{D}}=\{{\mathcal{D}}(\tau,\omega):(\tau,\omega)\in\R\times\Omega\}:\lim_{t\to +\infty}e^{-\frac{\nu\lambda_{1}}{3}t}\sup_{s\leq \tau}\|{\mathcal{D}}(s-t,\theta_{-t}\omega)\|^2_{\H}=0\right\},
	\end{align}
	where $\nu$ and $\lambda_{1}$ are same as given in  \eqref{1} and \eqref{poin}, respectively.
	\item Let ${\mathfrak{D}}_{\infty}$ be the collection of all tempered subsets of $\H$, that is,
	\begin{align*}
		{\mathfrak{D}}_{\infty}=\left\{\widehat{\mathcal{D}}=\{\widehat{\mathcal{D}}(\omega):\omega\in\Omega\}:\lim_{t\to +\infty}e^{-\frac{\nu\lambda_{1}}{3}t}\|\widehat{\mathcal{D}}(\theta_{-t}\omega)\|^2_{\H}=0\right\},
	\end{align*}
	where $\nu$ and $\lambda_{1}$ are same as in  \eqref{D-NSE}.
\end{itemize}
\subsection{Abstract results for asymptotic autonomy}
This subsection provides  abstract results for the asymptotic autonomy of random attractors introduced in \cite{YR}. Let $\varphi$ be a general NRDS on a Banach space $\mathbb{U}$ over $(\Omega,\mathscr{F},\mathbb{P},\theta),$ see Proposition \ref{NRDS} below. Let $\mathfrak{D}=\{\mathcal{D}(\tau,\omega)\}$ be the universe of some bi-parametric sets. We assume that $\mathfrak{D}$ is backward-closed, which means $\widetilde{\mathcal{D}}\in\mathfrak{D}$ provided $\mathcal{D}\in\mathfrak{D}$ and $\widetilde{\mathcal{D}}=\cup_{s\leq\tau}\mathcal{D}(s,\omega)$. Also, $\mathfrak{D}$ is inclusion closed (cf. \cite{YR,SandN_Wang}). Note that the universe as given in \eqref{BackTem} is both backward-closed and inclusion-closed.
\begin{definition}\label{BCRA}
	A bi-parametric set $\mathcal{A}=\{\mathcal{A}(\tau,\omega):\tau\times\omega\in(\R,\Omega)\}$ is said to be a $\mathfrak{D}$-backward compact random attractor for a NRDS $\varphi$ if
	\begin{enumerate}
		\item $\mathcal{A}\in\mathfrak{D}$ and $\mathcal{A}$ is a random set,
		\item $\mathcal{A}$ is backward compact, that is, both $\mathcal{A}(\tau,\omega)$ and $\overline{\cup_{s\leq\tau}\mathcal{A}(s,\omega)}$ are compact,
		\item $\mathcal{A}$ is invariant, that is, $\varphi(t,\tau,\omega)\mathcal{A}(\tau,\omega)=\mathcal{A}(t+\tau,\theta_{t}\omega)$ for $t\geq0$,
		\item $\mathcal{A}$ is attracting under the Hausdorff semi-distance, that is, for each $\mathcal{D}\in \mathfrak{D}$,
		\begin{align*}
			\lim_{t\to+\infty}\mathrm{dist}_{\mathbb{U}}(\varphi(t,\tau-t,\theta_{-t}\omega)\mathcal{D}(\tau-t,\theta_{-t}\omega),\mathcal{A}(\tau,\omega))=0.
		\end{align*}
	\end{enumerate}
\end{definition}
\begin{definition}\label{BAC}
	A non-autonomous cocycle $\varphi$ on $\mathbb{U}$ is said to be $\mathfrak{D}$-backward asymptotically compact if for each $(\tau,\omega,\mathcal{D})\in\R\times\Omega\times\mathfrak{D}$, the sequence $$\{\varphi(t_n,s_n-t_n,\theta_{-t_n}\omega)x_n\}_{n=1}^{+\infty}$$ has a convergent subsequence in $\mathbb{U}$, whenever $s_n\leq\tau$, $t_n\to+\infty$ and $x_n\in\mathcal{D}(s_n-t_n,\theta_{-t_{n}}\omega)$.
\end{definition}
Let us now provide the abstract result for asymptotic autonomy and backward compactness of random attractors which was proved in \cite{YR}. Let $\mathfrak{D}_{\infty}=\{D(\omega)\}$ be an inclusion-closed universe of some single-parametric sets.
\begin{theorem}[Theorem 5.1, \cite{YR}]\label{Abstract-result}
	Assume that an NRDS $\varphi$ satisfies the following two conditions:
	\begin{itemize}
		\item [($a_1$)] $\varphi$ has a closed random absorbing set $\mathcal{K}\in\mathfrak{D}$;
		\item [($a_2$)] $\varphi$ is $\mathfrak{D}$-backward asymptotically compact.
	\end{itemize}
Then, $\varphi$ has a unique backward compact random attractor $\mathcal{A}\in\mathfrak{D}$. Moreover, let $\varphi_{\infty}$ be an RDS with a $\mathfrak{D}_{\infty}$-random attractor $\mathcal{A}_{\infty}$, and suppose that
	\begin{itemize}
		\item [($b_1$)] $\varphi$ backward converges to $\varphi_{\infty}$ in the following sense:
		\begin{align*}
			\lim_{\tau\to-\infty}\|\varphi(t,\tau,\omega)x_{\tau}-\varphi_{\infty}(t,\omega)x_0\|_{\mathbb{U}}=0, \text{ for all } t\geq 0 \text{ and } \omega\in\Omega,
		\end{align*}
	whenever $\|x_{\tau}-x_0\|_{\mathbb{U}}\to 0$ as $\tau\to-\infty$.
	\item [($b_2$)] $\mathcal{K}_{\tau_0}\in\mathfrak{D}_{\infty}$ for some $\tau_{0}<0$, where $\mathcal{K}_{\tau_0}(\omega):=\bigcup_{\tau\leq\tau_0}\mathcal{K}(\tau,\omega)$.
	\end{itemize}
Then, $\mathcal{A}$ backward converges to $\mathcal{A}_{\infty}:$
	\begin{align*}
	\lim_{\tau\to -\infty}\mathrm{dist}_{\mathbb{U}}(\mathcal{A}(\tau,\omega),\mathcal{A}_{\infty}(\omega))=0\  \emph{ for all } \ \omega\in\Omega,
\end{align*}
where $\mathcal{A}_{\infty}$ is the random attractor for the corresponding autonomous system. For any discrete sequence $\tau_{n}\to-\infty$ as $n\to+\infty$, there is a subsequence $\{\tau_{n_k}\}\subseteq\{\tau_{n}\}$  such that
\begin{align*}
	\lim_{k\to +\infty}\mathrm{dist}_{\mathbb{U}}(\mathcal{A}(\tau_{n_k},\theta_{\tau_{n_k}}\omega),\mathcal{A}_{\infty}(\theta_{\tau_{n_k}}\omega))=0\  \emph{ for all } \ \omega\in\Omega.
\end{align*}
\end{theorem}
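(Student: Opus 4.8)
\emph{Strategy.} I would prove the two assertions in turn: first the existence, uniqueness and backward compactness of $\mathcal{A}$ from $(a_1)$--$(a_2)$, by realising $\mathcal{A}$ as the backward $\omega$-limit set of the absorbing set and adapting the classical pullback-attractor scheme; then the asymptotic autonomy from $(b_1)$--$(b_2)$, by a contradiction argument resting on the invariance and backward compactness of $\mathcal{A}$.

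\textbf{Part 1 (backward compact attractor).} Define
\[
\mathcal{A}(\tau,\omega):=\bigcap_{T\geq 0}\;\overline{\bigcup_{t\geq T}\varphi(t,\tau-t,\theta_{-t}\omega)\,\mathcal{K}(\tau-t,\theta_{-t}\omega)},
\]
with closures taken in $\mathbb{U}$. Nonemptiness, precompactness and closedness of each $\mathcal{A}(\tau,\omega)$ follow from $(a_2)$ with the constant choice $s_n\equiv\tau$: every sequence $\varphi(t_n,\tau-t_n,\theta_{-t_n}\omega)x_n$ with $t_n\to+\infty$ and $x_n\in\mathcal{K}(\tau-t_n,\theta_{-t_n}\omega)$ subconverges. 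Invariance $\varphi(t,\tau,\omega)\mathcal{A}(\tau,\omega)=\mathcal{A}(t+\tau,\theta_t\omega)$ follows from the cocycle identity together with the continuity of $\varphi$ in the initial datum (for the inclusion $\subseteq$) and the precompactness supplied by $(a_2)$ (for $\supseteq$). The $\mathfrak{D}$-pullback attraction is the usual argument by contradiction: were it to fail, the absorbing property of $\mathcal{K}$ in $(a_1)$ would put the tails of the offending orbit inside $\bigcup_{t\geq T}\varphi(t,\tau-t,\theta_{-t}\omega)\mathcal{K}(\tau-t,\theta_{-t}\omega)$, while $(a_2)$ produces a subsequential limit, necessarily lying in $\mathcal{A}(\tau,\omega)$ by the very definition above, contradicting the assumed separation; uniqueness is then the standard ``mutual attraction plus invariance'' argument. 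The genuinely new point is the backward compactness of $\overline{\bigcup_{s\leq\tau}\mathcal{A}(s,\omega)}$: given $y_k\in\mathcal{A}(s_k,\omega)$ with $s_k\leq\tau$, the definition of the $\omega$-limit set lets us choose $t_k\to+\infty$ and $x_k\in\mathcal{K}(s_k-t_k,\theta_{-t_k}\omega)$ with $\|y_k-\varphi(t_k,s_k-t_k,\theta_{-t_k}\omega)x_k\|_{\mathbb{U}}\leq 1/k$, and then Definition \ref{BAC}, applied to the very sequence $\{s_k\}\subseteq(-\infty,\tau]$, yields a convergent subsequence of $\{\varphi(t_k,s_k-t_k,\theta_{-t_k}\omega)x_k\}$ and hence of $\{y_k\}$. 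Measurability of $\omega\mapsto\mathcal{A}(\tau,\omega)$ is inherited from that of $\mathcal{K}$ through the countable unions, intersections and closures in the formula; and $\mathcal{A}\subseteq\mathcal{K}$ sectionwise because $\mathcal{K}$ is closed and absorbing.

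\textbf{Part 2 (asymptotic autonomy).} Suppose the conclusion fails: there are $\omega$ (in the $\theta$-invariant full-measure set on which everything holds), $\varepsilon>0$, $\tau_n\to-\infty$ and $a_n\in\mathcal{A}(\tau_n,\omega)$ with $\mathrm{dist}_{\mathbb{U}}(a_n,\mathcal{A}_{\infty}(\omega))\geq\varepsilon$. For $n$ large $\tau_n\leq\tau_0$, so $a_n\in\overline{\bigcup_{s\leq\tau_0}\mathcal{A}(s,\omega)}$, which is compact by Part 1; along a subsequence $a_n\to a$, and it suffices to show $a\in\mathcal{A}_{\infty}(\omega)$. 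Fix $t_m\to+\infty$. By invariance at the base point $\theta_{-t_m}\omega$ we have $\mathcal{A}(\tau_n,\omega)=\varphi(t_m,\tau_n-t_m,\theta_{-t_m}\omega)\mathcal{A}(\tau_n-t_m,\theta_{-t_m}\omega)$, hence $a_n=\varphi(t_m,\tau_n-t_m,\theta_{-t_m}\omega)\,b_n^{(m)}$ with $b_n^{(m)}\in\mathcal{A}(\tau_n-t_m,\theta_{-t_m}\omega)\subseteq\mathcal{K}_{\tau_0}(\theta_{-t_m}\omega)$ for $n$ large (using $\mathcal{A}\subseteq\mathcal{K}$ and $\tau_n-t_m\leq\tau_0$). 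Applying Part 1's backward compactness at $\theta_{-t_m}\omega$ and diagonalising over $m$, we may assume $b_n^{(m)}\to b^{(m)}\in\overline{\mathcal{K}_{\tau_0}(\theta_{-t_m}\omega)}$ for every $m$. Then $(b_1)$, used with base point $\theta_{-t_m}\omega$, time $t_m$, and perturbed data $b_n^{(m)}\to b^{(m)}$ as the internal time $\tau_n-t_m\to-\infty$, gives $a_n=\varphi(t_m,\tau_n-t_m,\theta_{-t_m}\omega)b_n^{(m)}\to\varphi_{\infty}(t_m,\theta_{-t_m}\omega)b^{(m)}$, so $a=\varphi_{\infty}(t_m,\theta_{-t_m}\omega)b^{(m)}$ for all $m$. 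By $(b_2)$, $\mathcal{K}_{\tau_0}\in\mathfrak{D}_{\infty}$ (hence its closure too, temperedness being preserved under closure), so the $\mathfrak{D}_{\infty}$-attraction of $\mathcal{A}_{\infty}$ yields
\[
\mathrm{dist}_{\mathbb{U}}\big(a,\mathcal{A}_{\infty}(\omega)\big)\leq\mathrm{dist}_{\mathbb{U}}\big(\varphi_{\infty}(t_m,\theta_{-t_m}\omega)\,\overline{\mathcal{K}_{\tau_0}(\theta_{-t_m}\omega)},\,\mathcal{A}_{\infty}(\omega)\big)\longrightarrow 0 \quad (m\to\infty),
\]
whence $a\in\mathcal{A}_{\infty}(\omega)$, contradicting $\mathrm{dist}_{\mathbb{U}}(a,\mathcal{A}_{\infty}(\omega))\geq\varepsilon$. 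For the claim along a discrete sequence $\tau_n\to-\infty$: the convergence just proved holds $\mathbb{P}$-a.s., hence in probability; since $\theta_{\tau_n}$ preserves $\mathbb{P}$, the variables $\mathrm{dist}_{\mathbb{U}}(\mathcal{A}(\tau_n,\theta_{\tau_n}\omega),\mathcal{A}_{\infty}(\theta_{\tau_n}\omega))$ have the same laws and therefore also converge to $0$ in probability, so a subsequence $\{\tau_{n_k}\}$ gives $\mathbb{P}$-a.s. convergence.

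\textbf{Expected main obstacle.} In Part 1 the delicate step is the backward compactness of $\overline{\bigcup_{s\leq\tau}\mathcal{A}(s,\omega)}$: one must approximate the $\omega$-limit points $y_k$ by honest orbit values with enough uniformity that the \emph{whole} varying sequence of base times $s_k\leq\tau$ can be handed to Definition \ref{BAC} --- which is exactly why $\mathfrak{D}$-backward asymptotic compactness is formulated with a sequence $s_n\leq\tau$ rather than a single $\tau$. In Part 2 the crux is the order of the two limits: for each fixed $t_m$ one first sends $n\to+\infty$, invoking $(a_2)$ to extract $b^{(m)}$ and $(b_1)$ to identify $a=\varphi_{\infty}(t_m,\theta_{-t_m}\omega)b^{(m)}$, and only afterwards sends $m\to+\infty$; the diagonal extraction over $m$ and the exact matching of hypothesis $(b_1)$ (base point frozen, internal time $\to-\infty$, perturbed data converging) are what make the argument close. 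The routine-but-necessary checks are $\mathcal{A}\subseteq\mathcal{K}$ sectionwise, the $\theta$-invariance of the full-measure set, and the measurability of the constructed $\mathcal{A}$.
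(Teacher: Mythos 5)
The paper itself offers no proof of Theorem \ref{Abstract-result}: it is imported verbatim as Theorem 5.1 of \cite{YR}, so there is no in-paper argument to compare against. Your proposal is, in substance, a correct reconstruction of the proof given in that reference: the attractor is realised as the pullback $\omega$-limit set of the absorbing family; backward compactness of $\overline{\bigcup_{s\leq\tau}\mathcal{A}(s,\omega)}$ is extracted from Definition \ref{BAC} applied to a varying sequence of base times $s_k\leq\tau$ (this is indeed the point of formulating backward asymptotic compactness with $s_n\leq\tau$ rather than a single $\tau$); and asymptotic autonomy is obtained by the two-limit contradiction argument --- first $n\to+\infty$ with $t_m$ frozen, using invariance, backward compactness and $(b_1)$ to identify $a=\varphi_{\infty}(t_m,\theta_{-t_m}\omega)b^{(m)}$, then $m\to+\infty$ using $(b_2)$ and the $\mathfrak{D}_{\infty}$-attraction of $\mathcal{A}_{\infty}$ --- with the almost sure subsequential convergence along $\theta_{\tau_{n_k}}\omega$ recovered from measure preservation and convergence in probability. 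Two points are glossed and would need a line each in a full write-up: (i) the inclusion $\varphi(t,\tau,\omega)\mathcal{A}(\tau,\omega)\subseteq\mathcal{A}(t+\tau,\theta_t\omega)$ and the identification of limits when invoking $(b_1)$ both use continuity of $\varphi(t,\tau,\omega,\cdot)$ in the initial datum, which is not among $(a_1)$--$(a_2)$ but is part of the standing definition of a continuous NRDS (and is what Lemma \ref{Soln} supplies in the concrete setting); (ii) measurability of $\omega\mapsto\mathcal{A}(\tau,\omega)$ does not follow from ``countable unions'' as stated, since the union over $t\geq T$ is uncountable --- one reduces to rational $t$ by continuity in time, or invokes the measurability machinery of \cite{SandN_Wang}. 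Neither point affects the correctness of the overall argument.
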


\section{2D SNSE: Multiplicative noise}\label{sec3}\setcounter{equation}{0}
In this section, we consider the 2D SNSE equations driven by a multiplicative white noise ($S(\u)=\u$ in \eqref{1}) and establish the existence and asymptotic autonomy of random attractors. Let us define $$\v(t,\tau,\omega,\v_{\tau})=e^{-z(\theta_{t}\omega)}\u(t,\tau,\omega,\u_{\tau})\ \text{ with  }\  \v_{\tau}=e^{-z(\theta_{\tau}\omega)}\u_{\tau},$$ where $z$ satisfies \eqref{OU2} and $\u(\cdot)$ is the solution of \eqref{SNSE} with $S(\u)=\u$. Then $\v(\cdot)$ satisfies:
\begin{equation}\label{CNSE-M}
	\left\{
	\begin{aligned}
		\frac{\d\v(t)}{\d t}+\nu \A\v(t)+e^{z(\theta_{t}\omega)}\B\big(\v(t)\big)&=\f(t) e^{-z(\theta_{t}\omega)} + \sigma z(\theta_t\omega)\v(t) , \quad t> \tau, \tau\in\R ,\\
		\v(x,\tau)&=\v_{0}(x)=e^{-z(\theta_{\tau}\omega)}\u_{0}(x), \hspace{12mm} x\in\mathcal{O},
	\end{aligned}
	\right.
\end{equation}
in $\V'$ (in weak sense). In order to prove the results of this work, the following assumption on the non-autonomous and autonomous external forcing terms $\f(\cdot,\cdot)$ and $\f_{\infty}(\cdot)$, respectively, is  needed.
\begin{hypothesis}\label{Hypo_f-N}
	The forcing terms  $\f\in\mathrm{L}^{2}_{\emph{loc}}(\R;\H)$ and $\f_{\infty}\in\H$ are such that
	\begin{align*}
		\lim_{\tau\to -\infty}\int^{\tau}_{-\infty}\|\f(t)-\f_{\infty}\|^2_{\H}\d t=0.
	\end{align*}
\end{hypothesis}
For example, one can consider  $\f(x,t)=(e^t+1)\f_0(x)$ and $\f_{\infty}=\f_0(x)$ and they satisfy the above Hypothesis \ref{Hypo_f-N} (see \cite{CGTW}). The following Lemma is adapted from the work \cite{YR}.
\begin{lemma}[Lemma 2.1, \cite{YR}]\label{Hypo_f1-N}
	Assume that Hypothesis \ref{Hypo_f-N} holds. Then, $\f$ is backward tempered, that is, for all $\gamma>0$ and $\tau\in\R$,
		\begin{align}\label{f2-N}
			\F(\gamma,\tau)=\sup_{s\leq\tau}\int_{-\infty}^{s}e^{\gamma(\xi-s)}\|\f(\xi)\|^2_{\H}\d\xi<+\infty.
		\end{align}
\end{lemma}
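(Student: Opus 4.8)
The plan is to reduce the claim to two elementary facts. First, the constant part $\f_{\infty}$ contributes a bounded amount to the integral because $\int_{-\infty}^{s}e^{\gamma(\xi-s)}\d\xi=\frac1\gamma$ for every $s\in\R$. Second, the tail function $g(t):=\int_{-\infty}^{t}\|\f(\xi)-\f_{\infty}\|^2_{\H}\d\xi$ is finite for every $t\in\R$ (it is finite for $t$ sufficiently negative, since Hypothesis \ref{Hypo_f-N} asserts that the quantity under the limit tends to $0$ and is therefore eventually finite, and for larger $t$ one only adds a finite $\mathrm{L}^2_{\mathrm{loc}}$ contribution), is non-decreasing in $t$, and tends to $0$ as $t\to-\infty$.

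First I would use $\|\f(\xi)\|_{\H}^2\leq 2\|\f(\xi)-\f_{\infty}\|_{\H}^2+2\|\f_{\infty}\|_{\H}^2$ to split, for any $s\leq\tau$,
\begin{align*}
	\int_{-\infty}^{s}e^{\gamma(\xi-s)}\|\f(\xi)\|^2_{\H}\d\xi\leq 2\int_{-\infty}^{s}e^{\gamma(\xi-s)}\|\f(\xi)-\f_{\infty}\|^2_{\H}\d\xi+\frac{2}{\gamma}\|\f_{\infty}\|_{\H}^2,
\end{align*}
so the last term is already independent of $s$. For the remaining integral I would decompose $(-\infty,s]$ into the unit intervals $(s-n-1,\,s-n]$, $n\geq 0$, on each of which $e^{\gamma(\xi-s)}\leq e^{-\gamma n}$, to get
\begin{align*}
	\int_{-\infty}^{s}e^{\gamma(\xi-s)}\|\f(\xi)-\f_{\infty}\|^2_{\H}\d\xi\leq\sum_{n=0}^{\infty}e^{-\gamma n}\big(g(s-n)-g(s-n-1)\big)\leq g(\tau)\sum_{n=0}^{\infty}e^{-\gamma n}=\frac{g(\tau)}{1-e^{-\gamma}},
\end{align*}
where I used $g(s-n)-g(s-n-1)\leq g(s-n)\leq g(s)\leq g(\tau)$ by monotonicity of $g$ and $s\leq\tau$. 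Taking the supremum over $s\leq\tau$ yields $\F(\gamma,\tau)\leq\dfrac{2g(\tau)}{1-e^{-\gamma}}+\dfrac{2}{\gamma}\|\f_{\infty}\|_{\H}^2<+\infty$.

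There is no serious obstacle here; the only point needing a little care is the uniformity in $s\leq\tau$, i.e.\ checking that the bound does not degenerate as $s\to-\infty$, and this is exactly what the monotonicity of $g$ (a fortiori its decay from Hypothesis \ref{Hypo_f-N}, though the latter is not needed for mere finiteness) provides. One should also record at the outset that $g(\tau)<+\infty$ for each fixed $\tau$, which combines the finiteness near $-\infty$ built into Hypothesis \ref{Hypo_f-N} with $\f\in\mathrm{L}^2_{\mathrm{loc}}(\R;\H)$.
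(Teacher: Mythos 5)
Your argument is correct, and it is worth noting that the paper itself offers no proof of this lemma at all: it simply cites Lemma~2.1 of \cite{YR}, so there is nothing internal to compare against. Your reduction is the natural one and matches what the cited source does: split $\|\f(\xi)\|_{\H}^2\leq 2\|\f(\xi)-\f_{\infty}\|_{\H}^2+2\|\f_{\infty}\|_{\H}^2$, handle the constant part via $\int_{-\infty}^{s}e^{\gamma(\xi-s)}\d\xi=1/\gamma$, and control the remainder uniformly in $s\leq\tau$ through the monotone tail function $g$; your preliminary observation that Hypothesis~\ref{Hypo_f-N} forces $g(t)<+\infty$ for all $t$ (eventual finiteness near $-\infty$ plus the $\mathrm{L}^2_{\mathrm{loc}}$ assumption) is exactly the point that needs to be recorded and is the only place the hypothesis is actually used. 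The one thing you over-engineer is the decomposition of $(-\infty,s]$ into unit intervals: since $e^{\gamma(\xi-s)}\leq 1$ for $\xi\leq s$, you can bound the remaining integral directly by $g(s)\leq g(\tau)$, which is both shorter and gives a better constant than $g(\tau)/(1-e^{-\gamma})$ (the unit-interval trick is only needed when one merely assumes a translation-bounded condition $\sup_{s}\int_{s-1}^{s}\|\f\|_{\H}^2\d\xi<+\infty$ rather than tail integrability). This is a cosmetic remark, not a gap.
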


\subsection{Existence and asymptotic autonomy of random attractors in $\H$}
In this section, we prove the existence of backward compact random attractors and their asymptotic autonomy in $\H$. First, we obtain the results which will help us to verify the assumptions of the abstract result (Theorem \ref{Abstract-result}). The following lemma demonstrates that the system \eqref{CNSE-M} has a unique weak as well as strong solutions.
\begin{lemma}\label{Soln}
	Suppose that $\f\in\mathrm{L}^2_{\mathrm{loc}}(\R;\H)$. For each $(\tau,\omega,\v_{\tau})\in\R\times\Omega\times\H$, the system \eqref{CNSE-M} has a unique weak solution $\v(\cdot,\tau,\omega,\v_{\tau})\in\mathrm{C}([\tau,+\infty);\H)\cap\mathrm{L}^2_{\mathrm{loc}}(\tau,+\infty;\V)$ such that $\v$ is continuous with respect to initial data. In addition, for $\v_{\tau}\in \V$, there exists a unique strong solution $\v(\cdot,\tau,\omega,\v_{\tau})\in \mathrm{C}([\tau, +\infty); \V) \cap \mathrm{L}^{2}_{\mathrm{loc}}(\tau, +\infty; \D(\A))$ such that $\v$ is continuous with respect to the initial data.
\end{lemma}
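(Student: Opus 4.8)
The plan is to establish well-posedness for \eqref{CNSE-M} via the classical Faedo--Galerkin scheme, using the monotonicity-type structure \eqref{441} of the nonlinear term and the standard energy estimates for the 2D Navier--Stokes equations, treating the extra terms $\sigma z(\theta_t\omega)\v$ and $\f(t)e^{-z(\theta_t\omega)}$ and the factor $e^{z(\theta_t\omega)}$ in front of $\B(\v)$ as bounded coefficients on any finite time interval $[\tau,\tau+T]$ (continuity of $t\mapsto z(\theta_t\omega)$ gives $\sup_{t\in[\tau,\tau+T]}|z(\theta_t\omega)|\leq C(\tau,T,\omega)<\infty$, cf. \eqref{conv_z2}). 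Fix $(\tau,\omega,\v_\tau)\in\R\times\Omega\times\H$ and a horizon $T>0$. Let $\{e_j\}$ be the orthonormal basis of $\H$ of eigenfunctions of $\A$, let $\H_m=\mathrm{span}\{e_1,\dots,e_m\}$ with projection $\mathcal{P}_m$, and solve the finite-dimensional ODE system for $\v_m(t)=\sum_{j=1}^m c_j^m(t)e_j$ obtained by projecting \eqref{CNSE-M} onto $\H_m$ with $\v_m(\tau)=\mathcal{P}_m\v_\tau$; local existence is immediate since the right-hand side is locally Lipschitz in $\v_m$ and continuous in $t$.

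\smallskip
\noindent
\emph{A priori estimates.} Testing the Galerkin equation with $\v_m(t)$ and using $\langle\B(\v_m),\v_m\rangle=0$ from \eqref{b0} gives
\begin{align*}
	\frac{1}{2}\frac{\d}{\d t}\|\v_m(t)\|_{\H}^2+\nu\|\v_m(t)\|_{\V}^2
	=e^{-z(\theta_t\omega)}\langle\f(t),\v_m(t)\rangle+\sigma z(\theta_t\omega)\|\v_m(t)\|_{\H}^2 .
\end{align*}
Estimating the forcing term by Cauchy--Schwarz, Poincar\'e \eqref{poin} and Young's inequality, and absorbing $\frac{\nu}{2}\|\v_m\|_{\V}^2$ on the left, a Gr\"onwall argument on $[\tau,\tau+T]$ (the coefficient $2\sigma z(\theta_t\omega)+\frac{1}{2}$ being integrable and $\f\in\mathrm{L}^2_{\mathrm{loc}}(\R;\H)$) yields a bound for $\{\v_m\}$ in $\mathrm{L}^\infty(\tau,\tau+T;\H)\cap\mathrm{L}^2(\tau,\tau+T;\V)$ independent of $m$. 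Using \eqref{b1} one bounds $\B(\v_m)$ in $\mathrm{L}^2(\tau,\tau+T;\V')$, hence $\{\partial_t\v_m\}$ is bounded in $\mathrm{L}^2(\tau,\tau+T;\V')$. By Banach--Alaoglu and the Aubin--Lions compactness lemma we extract a subsequence converging to some $\v$ weakly-$*$ in $\mathrm{L}^\infty(\tau,\tau+T;\H)$, weakly in $\mathrm{L}^2(\tau,\tau+T;\V)$, and strongly in $\mathrm{L}^2(\tau,\tau+T;\H)$; the strong convergence lets us pass to the limit in the nonlinear term in the usual way, so $\v$ is a weak solution, and $\v\in\mathrm{C}([\tau,\tau+T];\H)$ follows from $\v\in\mathrm{L}^2(\tau,\tau+T;\V)$, $\partial_t\v\in\mathrm{L}^2(\tau,\tau+T;\V')$ by the Lions--Magenes lemma. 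Since $T$ is arbitrary, $\v\in\mathrm{C}([\tau,+\infty);\H)\cap\mathrm{L}^2_{\mathrm{loc}}(\tau,+\infty;\V)$.

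\smallskip
\noindent
\emph{Uniqueness and continuous dependence.} Given two solutions $\v^1,\v^2$ with data $\v_\tau^1,\v_\tau^2$, set $\w=\v^1-\v^2$. Subtracting the equations and testing with $\w$, the nonlinear contribution becomes $-e^{z(\theta_t\omega)}\langle\B(\w,\w),\v^2\rangle$ by \eqref{441}, which by \eqref{b1} is bounded by $Ce^{z(\theta_t\omega)}\|\w\|_{\H}\|\w\|_{\V}\|\v^2\|_{\V}\leq\frac{\nu}{2}\|\w\|_{\V}^2+Ce^{2z(\theta_t\omega)}\|\v^2\|_{\V}^2\|\w\|_{\H}^2$; together with the $\sigma z(\theta_t\omega)\|\w\|_{\H}^2$ term this gives
\begin{align*}
	\frac{\d}{\d t}\|\w(t)\|_{\H}^2\leq\big(2\sigma z(\theta_t\omega)+Ce^{2z(\theta_t\omega)}\|\v^2(t)\|_{\V}^2\big)\|\w(t)\|_{\H}^2 ,
\end{align*}
and since $\|\v^2\|_{\V}^2\in\mathrm{L}^1_{\mathrm{loc}}$ and $z(\theta_\cdot\omega)$ is bounded on compacts, Gr\"onwall's inequality gives $\|\w(t)\|_{\H}^2\leq\|\w(\tau)\|_{\H}^2\exp(\int_\tau^t(\cdots)\d s)$, yielding uniqueness and continuity in the initial data in $\H$.

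\smallskip
\noindent
\emph{Strong solutions.} For $\v_\tau\in\V$, test the Galerkin equation with $\A\v_m(t)$. The term $\langle\B(\v_m),\A\v_m\rangle$ is controlled using \eqref{b2} with $\u_1=\v_m\in\V$, $\u_2=\v_m\in\D(\A)$, $\u_3=\A\v_m\in\H$, giving $|\langle\B(\v_m),\A\v_m\rangle|\leq C\|\v_m\|_{\H}^{1/2}\|\v_m\|_{\V}\|\A\v_m\|_{\H}^{3/2}$; by Young's inequality this is $\leq\frac{\nu}{4}e^{-z(\theta_t\omega)}\|\A\v_m\|_{\H}^2+Ce^{3z(\theta_t\omega)}\|\v_m\|_{\H}^2\|\v_m\|_{\V}^4$ (after multiplying by the factor $e^{z(\theta_t\omega)}$). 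Combined with the already-established $\mathrm{L}^2(\tau,\tau+T;\V)$ bound on $\v_m$, which makes the coefficient $Ce^{3z(\theta_t\omega)}\|\v_m\|_{\H}^2\|\v_m\|_{\V}^2$ integrable in time, Gr\"onwall yields a uniform bound for $\{\v_m\}$ in $\mathrm{L}^\infty(\tau,\tau+T;\V)\cap\mathrm{L}^2(\tau,\tau+T;\D(\A))$. Passing to the limit as before and applying the Lions--Magenes lemma with the pair $(\D(\A),\H)$ gives $\v\in\mathrm{C}([\tau,+\infty);\V)\cap\mathrm{L}^2_{\mathrm{loc}}(\tau,+\infty;\D(\A))$; continuous dependence in $\V$ follows by differentiating $\|\w\|_{\V}^2$, the nonlinear terms now being estimated by \eqref{b2} at the cost of an $\mathrm{L}^1_{\mathrm{loc}}$-in-time coefficient depending on $\|\v^1\|_{\D(\A)}$ and $\|\v^2\|_{\V}$.

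\smallskip
\noindent
The main technical obstacle is the strong-solution estimate: closing the $\V$-bound requires the $\langle\B(\v_m),\A\v_m\rangle$ term to be absorbed, and the cubic power $\|\A\v_m\|_{\H}^{3/2}$ appearing from \eqref{b2} is exactly borderline in 2D — it works only because the leftover coefficient involves $\|\v_m\|_{\V}^4$, two powers of which can be paid for using the $\mathrm{L}^2(\tau,\tau+T;\V)$ bound from the weak estimate, turning the Gr\"onwall coefficient into an $\mathrm{L}^1_{\mathrm{loc}}$ function. Everything else is routine once the random coefficients $e^{\pm z(\theta_t\omega)}$, $\sigma z(\theta_t\omega)$ are recognized as pathwise-continuous, hence bounded on each $[\tau,\tau+T]$ for $\omega$ fixed.
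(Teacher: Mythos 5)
Your proposal is correct and follows exactly the route the paper takes: its proof of this lemma simply invokes the standard Faedo--Galerkin method (citing Temam) for both weak and strong solutions and for continuous dependence on the data, which is precisely the argument you carry out in detail. Your energy estimates, the $2$D uniqueness trick via \eqref{441} and \eqref{b1}, and the $\A\v_m$-test with the $\|\A\v_m\|_{\H}^{3/2}$ Young absorption all match the estimates the paper itself uses later (e.g.\ in Lemma \ref{AbsorbingV} and Proposition \ref{LusinC-V}).
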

\begin{proof}
	One can prove the existence and uniqueness of weak as well as strong  solutions by a standard Faedo-Galerkin approximation method (cf. \cite{R.Temam}). Also, for the continuity with respect to the initial data $\v_{\tau}$, see \cite{R.Temam}.
\end{proof}

	\subsubsection{Lusin continuity and measurability of systems}
	Lusin continuity helps us to define the  non-autonomous random dynamical system (NRDS). Next result shows the Lusin continuity of mapping with respect to $\omega\in\Omega$ of solution to the system \eqref{CNSE-M}.
\begin{proposition}\label{LusinC}
	Suppose that $\f\in\mathrm{L}^2_{\mathrm{loc}}(\R;\H)$. For each $N\in\N$, the mapping $\omega\mapsto\v(t,\tau,\omega,\v_{\tau})$ $($solution of \eqref{CNSE-M}$)$ is continuous from $(\Omega_{N},d_{\Omega_N})$ to $\H$, uniformly in $t\in[\tau,\tau+T]$ with $T>0$.
\end{proposition}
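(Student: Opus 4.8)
The plan is to fix $N\in\mathbb{N}$, take a sequence $\omega_k\to\omega_0$ in $(\Omega_N,d_{\Omega_N})$, and show that the corresponding solutions $\v_k:=\v(\cdot,\tau,\omega_k,\v_\tau)$ converge to $\v_0:=\v(\cdot,\tau,\omega_0,\v_\tau)$ in $\mathrm{C}([\tau,\tau+T];\H)$. Write $\bar\v_k=\v_k-\v_0$. Subtracting the two copies of \eqref{CNSE-M}, $\bar\v_k$ solves, in $\V'$,
\begin{align*}
	\frac{\d\bar\v_k}{\d t}+\nu\A\bar\v_k
	&= -e^{z(\theta_t\omega_k)}\big[\B(\v_k)-\B(\v_0)\big]
	-\big(e^{z(\theta_t\omega_k)}-e^{z(\theta_t\omega_0)}\big)\B(\v_0)\\
	&\quad+\big(e^{-z(\theta_t\omega_k)}-e^{-z(\theta_t\omega_0)}\big)\f(t)
	+\sigma z(\theta_t\omega_k)\bar\v_k
	+\sigma\big(z(\theta_t\omega_k)-z(\theta_t\omega_0)\big)\v_0,
\end{align*}
with $\bar\v_k(\tau)=0$. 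First I would record the uniform a priori bounds: by Lemma \ref{conv_z}, the quantities $\sup_{t\in[\tau,\tau+T]}|z(\theta_t\omega_k)|$ and $\sup_{t\in[\tau,\tau+T]}e^{\pm z(\theta_t\omega_k)}$ are bounded by a constant depending only on $\tau,T,\omega_0$ (uniformly in $k$), so the energy estimates used in Lemma \ref{Soln} give a bound
\[
	\sup_{k}\Big(\sup_{t\in[\tau,\tau+T]}\|\v_k(t)\|_{\H}^2+\int_\tau^{\tau+T}\|\v_k(t)\|_{\V}^2\,\d t\Big)\le C(\tau,T,\omega_0,\|\v_\tau\|_{\H},\f),
\]
and the same bound for $\v_0$.

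Next I would take the $\V'$--$\V$ duality of the difference equation with $\bar\v_k$. The leading term $\nu\langle\A\bar\v_k,\bar\v_k\rangle=\nu\|\bar\v_k\|_{\V}^2$ is good. For the nonlinear term, using \eqref{441} and then \eqref{b1},
\[
	\big|\langle\B(\v_k)-\B(\v_0),\bar\v_k\rangle\big|
	=\big|b(\bar\v_k,\bar\v_k,\v_0)\big|
	\le C\|\bar\v_k\|_{\H}\|\bar\v_k\|_{\V}\|\v_0\|_{\V},
\]
which after Young's inequality is absorbed as $\tfrac{\nu}{4}\|\bar\v_k\|_{\V}^2+C\|\v_0\|_{\V}^2\|\bar\v_k\|_{\H}^2$; the factor $\|\v_0\|_{\V}^2$ is integrable in $t$ by the a priori bound, which is exactly what Grönwall needs. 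The term $\sigma z(\theta_t\omega_k)\|\bar\v_k\|_{\H}^2$ contributes a coefficient bounded uniformly in $k,t$. The three remaining terms are the "forcing" of the Grönwall argument; each carries a factor that tends to $0$ uniformly on $[\tau,\tau+T]$ by Lemma \ref{conv_z}, multiplied by something controlled by the a priori bounds: for instance $\big|(e^{z(\theta_t\omega_k)}-e^{z(\theta_t\omega_0)})\langle\B(\v_0),\bar\v_k\rangle\big|\le |e^{z(\theta_t\omega_k)}-e^{z(\theta_t\omega_0)}|\,\|\B(\v_0)\|_{\V'}\|\bar\v_k\|_{\V}$, and $\int_\tau^{\tau+T}\|\B(\v_0)\|_{\V'}^2\,\d t$ is finite (e.g.\ by \eqref{b1}, $\|\B(\v_0)\|_{\V'}\le C\|\v_0\|_{\H}\|\v_0\|_{\V}$). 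Collecting terms yields, after absorbing all $\|\bar\v_k\|_{\V}^2$ pieces into $\nu\|\bar\v_k\|_{\V}^2$,
\[
	\frac{\d}{\d t}\|\bar\v_k(t)\|_{\H}^2
	\le g_k(t)\|\bar\v_k(t)\|_{\H}^2+h_k(t),
\]
where $\int_\tau^{\tau+T}g_k(t)\,\d t\le C(\tau,T,\omega_0)$ uniformly in $k$, and $\int_\tau^{\tau+T}h_k(t)\,\d t\to0$ as $k\to\infty$ because $h_k$ is (sum of) products of a uniformly-$L^1$ factor with a sup-norm factor $\delta_k\to0$.

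Finally, since $\bar\v_k(\tau)=0$, Grönwall's inequality gives
\[
	\sup_{t\in[\tau,\tau+T]}\|\bar\v_k(t)\|_{\H}^2
	\le \Big(\int_\tau^{\tau+T}h_k(s)\,\d s\Big)\exp\Big(\int_\tau^{\tau+T}g_k(s)\,\d s\Big)\To 0
	\quad\text{as }k\to+\infty,
\]
which is precisely the asserted Lusin continuity, uniform in $t\in[\tau,\tau+T]$. The main obstacle, and the point requiring care, is the nonlinear term: one must route it through the identity \eqref{441} so that the difference $\bar\v_k$ appears twice and $\v_0$ (not $\v_k$) carries the $\V$-norm that is only $L^2$ in time — this is what makes the Grönwall coefficient integrable; a naive estimate of $\langle\B(\v_k)-\B(\v_0),\bar\v_k\rangle$ would leave an $\|\v_k\|_{\V}\|\bar\v_k\|_{\V}$ that cannot be absorbed. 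A secondary technical point is checking that all the $z$-dependent coefficients are genuinely uniformly bounded in $k$ and converge uniformly on the compact time interval, which is supplied verbatim by Lemma \ref{conv_z}. Measurability of $\omega\mapsto\v(t,\tau,\omega,\v_\tau)$ then follows from this continuity on each $\Omega_N$ together with $\Omega=\bigcup_N\Omega_N$, as in \cite{YR}.
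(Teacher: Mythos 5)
Your proposal is correct and follows essentially the same route as the paper's proof: form the difference equation, test with the difference, use the identity \eqref{441} together with \eqref{b1} so that the Gr\"onwall coefficient involves only $\|\v^0\|_{\V}^2$ (which is integrable in time), control all $z$-dependent factors uniformly via Lemma \ref{conv_z}, and conclude by Gr\"onwall with zero initial data. The only cosmetic difference is that you bound the term $\big(e^{z(\theta_t\omega_k)}-e^{z(\theta_t\omega_0)}\big)\B(\v^0)$ through $\|\B(\v^0)\|_{\V'}$ while the paper estimates the corresponding trilinear form $b(\v^0,\mathscr{V}^k,\v^0)$ directly; both yield the same integrable forcing term.
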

\begin{proof}
	Assume that $\omega_k,\omega_0\in\Omega_N,\ N\in\mathbb{N}$ such that $d_{\Omega_N}(\omega_k,\omega_0)\to0$ as $k\to+\infty$. Let $\mathscr{V}^k:=\v^k-\v^0,$ where $\v^k=\v(t,\tau,\omega_k,\v_{\tau})$ and $\v^0=\v(t,\tau,\omega_0,\v_{\tau})$ for $t\in[\tau,\tau+T]$. Then, $\mathscr{V}^k$ satisfies:
\begin{align}\label{LC1}
		\frac{\d\mathscr{V}^k}{\d t}&=-\nu \A\mathscr{V}^k-e^{z(\theta_{t}\omega_k)}\B\big(\v^k\big)+e^{z(\theta_{t}\omega_0)}\B\big(\v^0\big)+\f \left[e^{-z(\theta_{t}\omega_k)}-e^{-z(\theta_{t}\omega_0)}\right] \nonumber\\&\quad+ \sigma z(\theta_t\omega_k)\v^k-\sigma z(\theta_t\omega_0)\v^0\nonumber\\&=-\nu \A\mathscr{V}^k+\sigma z(\theta_t\omega_k) \mathscr{V}^k-e^{z(\theta_{t}\omega_k)}\left[\B\big(\v^k\big)-\B\big(\v^0\big)\right]-\left[e^{z(\theta_{t}\omega_k)}-e^{z(\theta_{t}\omega_0)}\right]\B\big(\v^0\big) \nonumber\\&\quad+\f \left[e^{-z(\theta_{t}\omega_k)}-e^{-z(\theta_{t}\omega_0)}\right]+\sigma\left[z(\theta_t\omega_k)-z(\theta_t\omega_0)\right]\v^0,
\end{align}
	in $\V'$ (in weak sense). Taking the inner product with $\mathscr{V}^k(\cdot)$ in \eqref{LC1}, and using \eqref{b0} and \eqref{441}, we get
	\begin{align}\label{LC2}
		\frac{1}{2}\frac{\d }{\d t}\|\mathscr{V}^k\|^2_{\H}&=-\nu\|\mathscr{V}^k\|^2_{\V}+\sigma z(\theta_t\omega_k)\|\mathscr{V}^k\|^2_{\H}-e^{z(\theta_{t}\omega_k)}b(\mathscr{V}^k,\v^0,\mathscr{V}^k)\nonumber\\&\quad+\left[e^{z(\theta_{t}\omega_k)}-e^{z(\theta_{t}\omega_0)}\right]b(\v^0,\mathscr{V}^k,\v^0)+\left[e^{-z(\theta_{t}\omega_k)}-e^{-z(\theta_{t}\omega_0)}\right](\f,\mathscr{V}^k)\nonumber\\&\quad+\sigma\left[z(\theta_t\omega_k)-z(\theta_t\omega_0)\right](\v^0,\mathscr{V}^k).
	\end{align}
Using \eqref{poin}, H\"older's and Young's inequalities, we obtain
\begin{align}
	\left|\left[e^{-z(\theta_{t}\omega_k)}-e^{-z(\theta_{t}\omega_0)}\right](\f,\mathscr{V}^k)\right|&\leq C\left|e^{-z(\theta_{t}\omega_k)}-e^{-z(\theta_{t}\omega_0)}\right|^2\|\f\|^2_{\H}+\frac{\nu\lambda_{1}}{8}\|\mathscr{V}^k\|^2_{\H}\nonumber\\&\leq C\left|e^{-z(\theta_{t}\omega_k)}-e^{-z(\theta_{t}\omega_0)}\right|^2\|\f\|^2_{\H}+\frac{\nu}{8}\|\mathscr{V}^k\|^2_{\V},\label{LC4}\\
	\left|\sigma\left[z(\theta_t\omega_k)-z(\theta_t\omega_0)\right](\v^0,\mathscr{V}^k)\right|&\leq C\left|z(\theta_t\omega_k)-z(\theta_t\omega_0)\right|^2\|\v^0\|^2_{\H}+\frac{\nu\lambda_{1}}{8}\|\mathscr{V}^k\|^2_{\H}\nonumber\\&\leq C\left|z(\theta_t\omega_k)-z(\theta_t\omega_0)\right|^2\|\v^0\|^2_{\H}+\frac{\nu}{8}\|\mathscr{V}^k\|^2_{\V}.\label{LC5}
\end{align}
 Applying \eqref{b1}, H\"older's and Young's inequalities, we estimate
\begin{align}
	\left|e^{z(\theta_{t}\omega_k)}b(\mathscr{V}^k,\v^0,\mathscr{V}^k)\right|&\leq Ce^{z(\theta_{t}\omega_k)}\|\mathscr{V}^k\|_{\H}\|\mathscr{V}^k\|_{\V}\|\v^0\|_{\V}\nonumber\\&\leq Ce^{2z(\theta_{t}\omega_k)}\|\v^0\|^2_{\V}\|\mathscr{V}^k\|^2_{\H}+\frac{\nu}{8}\|\mathscr{V}^k\|^2_{\V},\label{LC7}
\end{align}
and
\begin{align}
	\left|\left[e^{z(\theta_{t}\omega_k)}-e^{z(\theta_{t}\omega_0)}\right]b(\v^0,\mathscr{V}^k,\v^0)\right|&\leq C\left|e^{z(\theta_{t}\omega_k)}-e^{z(\theta_{t}\omega_0)}\right|\|\v^0\|_{\H}\|\mathscr{V}^k\|_{\V}\|\v^0\|_{\V}\nonumber\\&\leq C\left|e^{z(\theta_{t}\omega_k)}-e^{z(\theta_{t}\omega_0)}\right|^2\|\v^0\|^2_{\H}\|\v^0\|^2_{\V}+\frac{\nu}{8}\|\mathscr{V}^k\|^2_{\V}.\label{LC8}
\end{align}
Combining \eqref{LC2}-\eqref{LC8}, we arrive at
\begin{align}\label{LC13}
	\frac{\d }{\d t}\|\mathscr{V}^k(t)\|^2_{\H}+\frac{\nu}{2}\|\mathscr{V}^k(t)\|^2_{\V}\leq
P_1(t)\|\mathscr{V}^k(t)\|^2_{\H}+Q_1(t),
\end{align}
for a.e. $t\in[\tau,\tau+T]$, $T>0$, where
\begin{align*}
	P_1&=2z(\theta_{t}\omega_k)+Ce^{2z(\theta_{t}\omega_k)}\|\v^0\|^2_{\V},\\	Q_1&=C\left|e^{-z(\theta_{t}\omega_k)}-e^{-z(\theta_{t}\omega_0)}\right|^2\|\f\|^2_{\H}+C\left|z(\theta_t\omega_k)-z(\theta_t\omega_0)\right|^2\|\v^0\|^2_{\H}\nonumber\\&\quad+C\left|e^{z(\theta_{t}\omega_k)}-e^{z(\theta_{t}\omega_0)}\right|^2\|\v^0\|^2_{\H}\|\v^0\|^2_{\V}.
\end{align*}
From \eqref{conv_z2} and the fact that $\v^0\in\mathrm{L}^2_{\mathrm{loc}}(\tau,+\infty;\V)$ imply that
\begin{align}\label{LC15}
	\int_{\tau}^{\tau+T}P_1(t)\d t\leq C(\tau,T,\omega_0).
\end{align}
Now, from the fact that $\f\in\mathrm{L}^2_{\text{loc}}(\R;\H)$, $\v^0\in\mathrm{C}([\tau,+\infty);\H)\cap\mathrm{L}^2_{\mathrm{loc}}(\tau,+\infty;\V)$ and Lemma \ref{conv_z}, we conclude that
\begin{align}\label{LC16}
	\lim_{k\to+\infty}\int_{\tau}^{\tau+T}Q_1(t)\d t=0.
\end{align}
Making use of Gronwall's inequality in \eqref{LC13}, we get for all $t\in[\tau,\tau+T]$,
\begin{align}\label{LC17}
	\|\mathscr{V}^k(t)\|^2_{\H}\leq e^{\int_{\tau}^{\tau+T}P_1(t)\d t}\left[\int_{\tau}^{\tau+T}Q_1(t)\d t\right].
\end{align}
In view of \eqref{LC15}-\eqref{LC17}, we find for all $t\in[\tau,\tau+T]$,
\begin{align}\label{LC18}
	\|\mathscr{V}^k(t)\|^2_{\H}\to0 \ \text{ as } \ k \to +\infty,
\end{align}
which completes the proof. Furthermore, integrating \eqref{LC13} from $\tau$ to $\tau+T$ and using \eqref{LC15}-\eqref{LC17}, we get
\begin{align}\label{LC19}
	\int_{\tau}^{\tau+T}\|\mathscr{V}^k(t)\|^2_{\V}\d t\to0 \ \text{ as } \ k\to +\infty,
\end{align}
for any $\tau\in\mathbb{R}$.
\end{proof}

Lemma \ref{Soln} ensures us that we can define a mapping $\Phi:\R^+\times\R\times\Omega\times\H\to\H$ by
\begin{align}\label{Phi}
	\Phi(t,\tau,\omega,\u_{\tau})=\u(t+\tau,\tau,\theta_{-\tau}\omega,\u_{\tau})=e^{z(\theta_{t}\omega)}\v(t+\tau,\tau,\theta_{-\tau}\omega,\v_{\tau}).
\end{align}

The Lusin continuity in Proposition \ref{LusinC} provides the $\mathscr{F}$-measurability of $\Phi$. Consequently, in view of Lemma \ref{Soln} and Proposition \ref{LusinC}, we have the following result for NRDS.
\begin{proposition}\label{NRDS}
	The mapping $\Phi$ defined by \eqref{Phi} is an  NRDS on $\H$, that is, $\Phi$ has  the following properties:
	\begin{itemize}
		\item [(i)]$\Phi$ is $(\mathscr{B}(\R^+)\times\mathscr{B}(\R)\times\mathscr{F}\times\mathscr{B}(\H);\mathscr{B}(\H))$ measurable,
		\item [(ii)] $\Phi$ satisfies the cocycle property: $\Phi(0,\tau,\omega,\cdot)=\I$, and
		\begin{align*}
			\Phi(t+s,\tau,\omega,\u_{\tau})=\Phi(t,\tau+s,\theta_s\omega,\Phi(s,\tau,\omega,\u_{\tau})), \ \ \ t,s\geq0.
		\end{align*}
	\end{itemize}
\end{proposition}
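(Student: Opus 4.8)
The plan is to verify the two defining properties of an NRDS for the map $\Phi$ built from the solution operator of \eqref{CNSE-M} via the Ornstein--Uhlenbeck change of variables. These are exactly (i) the joint measurability statement and (ii) the cocycle identity together with $\Phi(0,\tau,\omega,\cdot)=\I$. Everything rests on Lemma \ref{Soln} (existence, uniqueness and continuity in the initial datum of the transformed system) and Proposition \ref{LusinC} (Lusin continuity in $\omega$), both of which I invoke freely.

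First I would establish (ii), which is the more routine part. The identity $\Phi(0,\tau,\omega,\u_\tau)=\u_\tau$ is immediate from \eqref{Phi} since $\v(\tau,\tau,\theta_{-\tau}\omega,\v_\tau)=\v_\tau=e^{-z(\theta_0\omega)}\u_\tau$ and $z(\theta_0\omega)=0$, so $e^{z(\theta_0\omega)}=1$. For the cocycle property, I first record the corresponding property for the transformed solution $\v$: by uniqueness of solutions to \eqref{CNSE-M} (Lemma \ref{Soln}), for $t,s\geq 0$,
\begin{align*}
	\v(t+s+\tau,\tau,\theta_{-\tau}\omega,\v_\tau)=\v\bigl(t+s+\tau,\,s+\tau,\,\theta_{-(s+\tau)}(\theta_s\omega),\,\v(s+\tau,\tau,\theta_{-\tau}\omega,\v_\tau)\bigr),
\end{align*}
since the right-hand side solves \eqref{CNSE-M} on $[s+\tau,+\infty)$ with the driving noise $\theta_s\omega$ shifted by $-(s+\tau)$ and with the correct value at time $s+\tau$, using $\theta_{-(s+\tau)}\circ\theta_s=\theta_{-\tau}$. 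Then I translate this through the conjugacy \eqref{Phi}: writing $\u_s:=\Phi(s,\tau,\omega,\u_\tau)=e^{z(\theta_s\omega)}\v(s+\tau,\tau,\theta_{-\tau}\omega,\v_\tau)$, the initial datum for the inner system relative to $\theta_s\omega$ is $e^{-z(\theta_0(\theta_s\omega))}\u_s=\u_s$, and the cocycle algebra for $z$, namely $z(\theta_t(\theta_s\omega))=z(\theta_{t+s}\omega)$, reconciles the exponential prefactors so that $\Phi(t+s,\tau,\omega,\u_\tau)=\Phi(t,\tau+s,\theta_s\omega,\u_s)$. This is the standard "the OU transform intertwines the two cocycles" computation and I would present it compactly.

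Next I would treat (i), the measurability. The strategy is the classical one: joint measurability of a cocycle follows from continuity in $(t,\u_\tau)$ plus measurability in $\omega$, via a Carathéodory-type argument. Continuity of $(t,\u_\tau)\mapsto\v(t+\tau,\tau,\theta_{-\tau}\omega,\v_\tau)$ on $\R^+\times\H$ is furnished by Lemma \ref{Soln} ($\v\in\mathrm{C}([\tau,\infty);\H)$ and continuous dependence on the datum), and multiplication by the scalar $e^{z(\theta_t\omega)}$, continuous in $t$, preserves this; $\tau$ enters continuously as well. For the $\omega$-dependence, Proposition \ref{LusinC} gives that $\omega\mapsto\v(t,\tau,\omega,\v_\tau)$ is continuous on each $\Omega_N$, hence $\mathscr{B}(\Omega_N)$-measurable; since $\Omega=\bigcup_N\Omega_N$ with $\Omega_N$ measurable (indeed closed), $\omega\mapsto\v$ is $\mathscr{F}$-measurable on all of $\Omega$, and composing with the measurable shift $\omega\mapsto\theta_{-\tau}\omega$ keeps measurability. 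A function that is continuous in one group of variables and measurable in another (a Carathéodory function) is jointly measurable on the product $\sigma$-algebra; I would state this and conclude that $\Phi$ is $(\mathscr{B}(\R^+)\times\mathscr{B}(\R)\times\mathscr{F}\times\mathscr{B}(\H);\mathscr{B}(\H))$-measurable.

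The main obstacle is the measurability in $\omega$: the OU process $z(\theta_t\omega)$, and hence the solution, is only continuous in $\omega$ on the pieces $\Omega_N$ of the exhaustion $\Omega=\bigcup_N\Omega_N$ (because $\omega$ has only sub-exponential, not uniform, growth), not on $\Omega$ globally — this is precisely why Proposition \ref{LusinC} is phrased $\Omega_N$ by $\Omega_N$. The resolution is exactly as above: global $\mathscr{F}$-measurability is recovered because the countable union $\bigcup_N\Omega_N$ exhausts $\Omega$ and each $\Omega_N$ is $\mathscr{F}$-measurable, so a function measurable on each $\Omega_N$ is measurable on $\Omega$. Once this point is handled, the Carathéodory argument for joint measurability and the intertwining computation for the cocycle property are both standard, so the proof is short.
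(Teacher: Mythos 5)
Your proposal is correct and follows essentially the route the paper intends: the paper gives no written proof, simply deducing the proposition from Lemma \ref{Soln} (well-posedness and continuity in the data) and Proposition \ref{LusinC} (Lusin continuity on the exhaustion $\Omega=\cup_N\Omega_N$), and your write-up supplies exactly the standard details those citations are meant to cover — the uniqueness/conjugacy argument for the cocycle identity and the Carath\'eodory-plus-exhaustion argument for joint measurability. One small slip: $z(\theta_0\omega)=z(\omega)$ is not zero in general (it is the stationary OU value, not $\omega(0)$); the identity $\Phi(0,\tau,\omega,\cdot)=\I$ instead follows from the cancellation $e^{z(\omega)}\v_{\tau}=e^{z(\omega)}e^{-z(\omega)}\u_{\tau}=\u_{\tau}$, so the conclusion is unaffected.
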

\subsubsection{Backward convergence of NRDS}
In this subsection, we first consider the autonomous system corresponding to the non-autonomous system \eqref{SNSE} with $S(\u)=\u$ and prove that the solution to the system \eqref{CNSE-M} converges to the solution of the corresponding autonomous system as $\tau\to-\infty$ in $\H$. Consider the autonomous SNSE with the multiplicative white noise:
\begin{equation}\label{A-SNSE-M}
	\left\{
	\begin{aligned}
		\frac{\d\widetilde{\u}(t)}{\d t}+\nu \A\widetilde{\u}(t)+\B(\widetilde{\u}(t))&=\f_{\infty} +\widetilde{\u}(t)\circ\frac{\d \W(t)}{\d t} , \\
		\widetilde{\u}(x,0)&=\widetilde{\u}_{0}(x),	\ x\in \mathcal{O}.
	\end{aligned}
	\right.
\end{equation}
Let $\widetilde{\v}(t,\omega)=e^{-z(\theta_{t}\omega)}\widetilde{\u}(t,\omega)$. Then, the system \eqref{A-SNSE-M} can be written in the following pathwise deterministic system:
\begin{equation}\label{A-CNSE-M}
	\left\{
	\begin{aligned}
		\frac{\d\widetilde{\v}(t)}{\d t}+\nu \A\widetilde{\v}(t)+e^{z(\theta_{t}\omega)}\B\big(\widetilde{\v}(t)\big)&=\f_{\infty} e^{-z(\theta_{t}\omega)} + \sigma z(\theta_t\omega)\widetilde{\v}(t) , \quad t> \tau, \ \tau\in\R ,\\
		\widetilde{\v}(x,0)&=\widetilde{\v}_{0}(x)=e^{-z(\omega)}\widetilde{\u}_{0}(x), \hspace{13mm} x\in\mathcal{O},
	\end{aligned}
	\right.
\end{equation}
in $\V'$ (in weak sense).
\begin{proposition}\label{Back_conver}
	Suppose that Hypothesis \ref{Hypo_f-N} is satisfied. Then, the solution $\v$ of the system \eqref{CNSE-M} backward converges to the solution $\widetilde{\v}$ of the system \eqref{A-CNSE-M}, that is,
	\begin{align}\label{BC}
		\lim_{\tau\to -\infty}\|\v(T+\tau,\tau,\theta_{-\tau}\omega,\v_{\tau})-\widetilde{\v}(t,\omega,\widetilde{\v}_0)\|_{\H}=0, \ \ \text{ for all } T>0 \text{ and } \omega\in\Omega,
	\end{align}
whenever $\|\v_{\tau}-\widetilde{\v}_0\|_{\H}\to0$ as $\tau\to-\infty.$
\end{proposition}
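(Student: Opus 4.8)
The plan is to set up the difference equation between the two pathwise systems, test it against the difference in $\H$, and close a Gronwall-type estimate whose inhomogeneous term is controlled by Hypothesis \ref{Hypo_f-N}. Write $\boldsymbol{\xi}(t):=\v(t+\tau,\tau,\theta_{-\tau}\omega,\v_\tau)-\widetilde{\v}(t,\omega,\widetilde{\v}_0)$ for $t\in[0,T]$. Both terms solve \eqref{CNSE-M} and \eqref{A-CNSE-M} respectively; note that for the first solution the relevant random coefficient is $z(\theta_{t}\omega)$ after the shift $\theta_{-\tau}$ is absorbed (i.e. $z(\theta_{t+\tau}\theta_{-\tau}\omega)=z(\theta_t\omega)$), so both equations carry the \emph{same} Ornstein-Uhlenbeck factors $e^{z(\theta_t\omega)}$ and $\sigma z(\theta_t\omega)$ — only the forcing differs, $\f(t+\tau)e^{-z(\theta_t\omega)}$ versus $\f_\infty e^{-z(\theta_t\omega)}$. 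Hence $\boldsymbol{\xi}$ satisfies, in $\V'$,
\begin{align*}
	\frac{\d\boldsymbol{\xi}}{\d t}+\nu\A\boldsymbol{\xi}+e^{z(\theta_t\omega)}\big[\B(\v)-\B(\widetilde{\v})\big]=\big(\f(t+\tau)-\f_\infty\big)e^{-z(\theta_t\omega)}+\sigma z(\theta_t\omega)\boldsymbol{\xi},
\end{align*}
with $\|\boldsymbol{\xi}(0)\|_{\H}=\|\v_\tau-\widetilde{\v}_0\|_{\H}\to0$.

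Next I would take the $\H$-inner product with $\boldsymbol{\xi}$. Using \eqref{441} to write $\langle\B(\v)-\B(\widetilde{\v}),\boldsymbol{\xi}\rangle=-b(\boldsymbol{\xi},\boldsymbol{\xi},\widetilde{\v})$ (the cancellation $b(\cdot,\cdot,\cdot)$ with repeated last two arguments vanishing), then the estimate \eqref{b1} gives $|e^{z(\theta_t\omega)}b(\boldsymbol{\xi},\boldsymbol{\xi},\widetilde{\v})|\le Ce^{z(\theta_t\omega)}\|\boldsymbol{\xi}\|_{\H}\|\boldsymbol{\xi}\|_{\V}\|\widetilde{\v}\|_{\V}$, absorbed by Young's inequality into $\tfrac{\nu}{2}\|\boldsymbol{\xi}\|_{\V}^2+Ce^{2z(\theta_t\omega)}\|\widetilde{\v}\|_{\V}^2\|\boldsymbol{\xi}\|_{\H}^2$. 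The forcing term is bounded, via Cauchy–Schwarz and Young, by $Ce^{-2z(\theta_t\omega)}\|\f(t+\tau)-\f_\infty\|_{\H}^2+\tfrac{\nu\lambda_1}{4}\|\boldsymbol{\xi}\|_{\H}^2$, and the last term is simply $\sigma z(\theta_t\omega)\|\boldsymbol{\xi}\|_{\H}^2$. Collecting terms one gets
\begin{align*}
	\frac{\d}{\d t}\|\boldsymbol{\xi}(t)\|_{\H}^2\le \mathcal{P}(t)\|\boldsymbol{\xi}(t)\|_{\H}^2+\mathcal{Q}_\tau(t),
\end{align*}
where $\mathcal{P}(t)=2\sigma z(\theta_t\omega)+Ce^{2z(\theta_t\omega)}\|\widetilde{\v}(t)\|_{\V}^2$ is $\tau$-independent and integrable on $[0,T]$ (by \eqref{conv_z2}-type bounds on $z$ and $\widetilde{\v}\in\mathrm{L}^2_{\mathrm{loc}}(0,\infty;\V)$ from Lemma \ref{Soln}), while $\mathcal{Q}_\tau(t)=Ce^{-2z(\theta_t\omega)}\|\f(t+\tau)-\f_\infty\|_{\H}^2$. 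Gronwall's inequality then yields
\begin{align*}
	\|\boldsymbol{\xi}(T)\|_{\H}^2\le e^{\int_0^T\mathcal{P}(t)\d t}\left(\|\v_\tau-\widetilde{\v}_0\|_{\H}^2+\int_0^T\mathcal{Q}_\tau(t)\d t\right).
\end{align*}

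Finally I would show the right-hand side tends to $0$ as $\tau\to-\infty$. The prefactor $e^{\int_0^T\mathcal{P}(t)\d t}$ is a finite constant depending only on $(T,\omega)$. The first bracketed term vanishes by hypothesis. For the integral, on $[0,T]$ we have $\sup_{t\in[0,T]}e^{-2z(\theta_t\omega)}\le C(T,\omega)<\infty$ by continuity of $t\mapsto z(\theta_t\omega)$, so $\int_0^T\mathcal{Q}_\tau(t)\d t\le C(T,\omega)\int_0^T\|\f(t+\tau)-\f_\infty\|_{\H}^2\d t=C(T,\omega)\int_\tau^{\tau+T}\|\f(s)-\f_\infty\|_{\H}^2\d s\le C(T,\omega)\int_{-\infty}^{\tau+T}\|\f(s)-\f_\infty\|_{\H}^2\d s$, which tends to $0$ as $\tau\to-\infty$ by Hypothesis \ref{Hypo_f-N}. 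This proves \eqref{BC}. The only mildly delicate point — the ``main obstacle'' — is making sure the $\tau$-dependence of the Gronwall prefactor is harmless: one must verify that $\mathcal{P}$ genuinely does not depend on $\tau$ (which is why it matters that the shift $\theta_{-\tau}$ is exactly compensated inside $z(\theta_{t+\tau}\theta_{-\tau}\omega)=z(\theta_t\omega)$ and that $\widetilde{\v}$ is the fixed autonomous trajectory on $[0,T]$), so that the exponential factor is a genuine constant rather than something blowing up with $\tau$.
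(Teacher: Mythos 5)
Your proposal is correct and follows essentially the same route as the paper's proof: the same difference equation for $\mathscr{V}^{\tau}$, the same cancellation via \eqref{b0} and \eqref{441} reducing the nonlinear term to a single trilinear form controlled by \eqref{b1} and Young's inequality, and the same Gronwall argument with the forcing integral killed by Hypothesis \ref{Hypo_f-N}. The only differences are cosmetic (how Young's inequality distributes the factor $e^{-2z(\theta_t\omega)}$ in the forcing estimate), and your explicit remark that $z(\theta_{t+\tau}\theta_{-\tau}\omega)=z(\theta_t\omega)$ makes the $\tau$-independence of the Gronwall prefactor clearer than the paper does.
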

\begin{proof}
	Let $\mathscr{V}^{\tau}(t):=\v(t+\tau,\tau,\theta_{-\tau}\omega,\v_{\tau})-\widetilde{\v}(t,\omega,\widetilde{\v}_0)$ for $t\geq0$. From \eqref{CNSE-M} and \eqref{A-CNSE-M}, we obtain
	\begin{align}\label{BC1}
		\frac{\d\mathscr{V}^{\tau}}{\d t}&=-\nu \A\mathscr{V}^{\tau}-e^{z(\theta_{t}\omega)}\left[\B\big(\v\big)-\B\big(\widetilde{\v}\big)\right] +e^{-z(\theta_{t}\omega)}\left[\f(t+\tau)-\f_{\infty}\right] + \sigma z(\theta_t\omega)\mathscr{V}^{\tau},
	\end{align}
	in $\V'$ (in weak sense).  Taking the inner product with $\mathscr{V}^{\tau}(\cdot)$ in \eqref{BC1}, and using \eqref{b0} and \eqref{441}, we get
	\begin{align}\label{BC2}
		&\frac{1}{2}\frac{\d }{\d t}\|\mathscr{V}^{\tau}\|^2_{\H}\nonumber\\&=-\nu\|\mathscr{V}^{\tau}\|^2_{\V}+\sigma z(\theta_t\omega)\|\mathscr{V}^{\tau}\|^2_{\H}-e^{z(\theta_{t}\omega)}b(\mathscr{V}^{\tau},\widetilde{\v},\mathscr{V}^{\tau})+e^{-z(\theta_{t}\omega)}(\f(t+\tau)-\f_{\infty},\mathscr{V}^{\tau}).
	\end{align}
Applying H\"older's and Young's inequalities, we infer
\begin{align}\label{BC4}
	\left|e^{-z(\theta_{t}\omega)}(\f(t+\tau)-\f_{\infty},\mathscr{V}^{\tau})\right|\leq\|\f(t+\tau)-\f_{\infty}\|^2_{\H}+Ce^{-2z(\theta_{t}\omega)}\|\mathscr{V}^{\tau}\|^2_{\H},
\end{align}
and
\begin{align}\label{BC5}
	&\left|e^{z(\theta_{t}\omega)}b(\mathscr{V}^{\tau},\widetilde{\v},\mathscr{V}^{\tau})\right|\leq Ce^{2z(\theta_{t}\omega)}\|\widetilde{\v}\|^2_{\V}\|\mathscr{V}^{\tau}\|^2_{\H}+\frac{\nu}{2}\|\mathscr{V}^{\tau}\|^2_{\V},
\end{align}
where we have used \eqref{b1} also in \eqref{BC5}. Combining \eqref{BC2}-\eqref{BC5}, we arrive at
\begin{align}\label{BC6}
	\frac{\d }{\d t}\|\mathscr{V}^{\tau}\|^2_{\H}+\frac{\nu}{2}\|\mathscr{V}^{\tau}\|^2_{\V}\leq C\big[
		S_1(t)\|\mathscr{V}^{\tau}\|^2_{\H}+\|\f(t+\tau)-\f_{\infty}\|^2_{\H}\big],
\end{align}
where
\begin{align*}
	S_1(t)=e^{2z(\theta_{t}\omega)}\|\widetilde{\v}\|^2_{\V}+e^{-2z(\theta_{t}\omega)}+\left|z(\theta_{t}\omega)\right|.
\end{align*}
Making use of Gronwall's inequality to \eqref{BC6} over $(0,T)$, we obtain
\begin{align}\label{BC7}
	\|\mathscr{V}^{\tau}(T)\|^2_{\H}\leq \left[\|\mathscr{V}^{\tau}(0)\|^2_{\H}+C\int_{0}^{T}\|\f(t+\tau)-\f_{\infty}\|^2_{\H} \d t\right]e^{C\int_{0}^{T}S_1(t)\d t}.
\end{align}
Since $z$ is continuous and $\widetilde{\v}\in\mathrm{L}^2(0,T;\V)$, it implies that
\begin{align}\label{BC8}
	\int_{0}^{T}S_{1}(t)\d t<+\infty.
\end{align}
 From Hypothesis \ref{Hypo_f-N}, we deduce that
\begin{align}\label{BC9}
	\int_{0}^{T}\|\f(t+\tau)-\f_{\infty}\|^2_{\H} \d t\leq \int_{-\infty}^{\tau+T}\|\f(t)-\f_{\infty}\|^2_{\H} \d t\to 0 \ \text{ as } \ \tau\to -\infty.
\end{align}
Using the fact that $\int_{0}^{T}S_{1}(t)\d t$ is bounded, \eqref{BC9} and $\|\mathscr{V}^{\tau}(0)\|^2_{\H}=\|\v_{\tau}-\widetilde{\v}_0\|_{\H}\to0$ as $\tau\to-\infty$, one can conclude the proof.  Furthermore, integrating \eqref{BC6} from $0$ to $T$ and using \eqref{BC}, \eqref{BC7}-\eqref{BC9}, we get
\begin{align}\label{BC10}
	\int_{0}^{T}\|\mathscr{V}^{\tau}(t)\|^2_{\V}\d t \to 0 \ \text{ as }\  \tau \to -\infty,
\end{align}
for all $T>0$.
\end{proof}
\subsubsection{Increasing random absorbing set}
This subsection provides the existence of increasing $\mathfrak{D}$-random absorbing set in $\H$ for the non-autonomous SNSE \eqref{SNSE} with $S(\u)=\u$.
\begin{lemma}\label{Absorbing}
	Suppose that $\f\in\mathrm{L}^2_{\mathrm{loc}}(\R;\H)$. Then, for each $(\tau,\omega,D)\in\R\times\Omega\times\mathfrak{D},$ there exists a time $\mathcal{T}:=\mathcal{T}(\tau,\omega,D)\geq2$ such that
	\begin{align}\label{AB1}
		&\sup_{s\leq \tau}\sup_{t\geq \mathcal{T}}\sup_{\v_{0}\in D(s-t,\theta_{-t}\omega)}\|\v(s,s-t,\theta_{-s}\omega,\v_{0})\|^2_{\H}\leq 1+\frac{2}{\nu\lambda_{1}}K(\tau,\omega),
	\end{align}
\begin{align}\label{AB11}
	\sup_{s\leq \tau}\sup_{t\geq \mathcal{T}}\sup_{\v_{0}\in D(s-t,\theta_{-t}\omega)}\sup_{\ell\in[s-2,s]}\|\v(\ell,s-t,\theta_{-s}\omega,\v_{0})\|^2_{\H}\leq e^{2\nu\lambda_{1}}\bigg[ 1+\frac{2}{\nu\lambda_{1}}K(\tau,\omega)\bigg],
\end{align}
and
\begin{align}\label{AB111}
	&\sup_{s\leq \tau}\sup_{t\geq \mathcal{T}}\sup_{\v_{0}\in D(s-t,\theta_{-t}\omega)}\int_{s-2}^{s}\|\v(\ell,s-t,\theta_{-s}\omega,\v_{0})\|^2_{\V}\d \ell\leq\widetilde{K}(\tau,\omega),
\end{align}
where $K(\tau,\omega)$ and $\widetilde{K}(\tau,\omega)$ are given by
\begin{align}\label{AB2}
	K(\tau,\omega):=\sup_{s\leq \tau}\int_{-\infty}^{0}e^{\nu\lambda_{1}\uprho+2|z(\theta_{\uprho}\omega)|+2\sigma\int_{\uprho}^{0}z(\theta_{\upeta}\omega)\d\upeta}\|\f(\uprho+s)\|^2_{\H}\d\uprho,
\end{align}
and
\begin{align}\label{AB22}
\widetilde{K}(\tau,\omega)&:= e^{2\nu\lambda_{1}}	\bigg[1+\frac{2}{\nu\lambda_{1}}K(\tau,\omega)\bigg]\bigg[1+2\sigma\int_{-2}^0\left|z(\theta_{\uprho}\omega)\right|\d\uprho\bigg]\nonumber\\&\quad+\frac{2}{\nu\lambda_{1}}\sup_{s\leq \tau}\int_{-2}^{0}e^{2|z(\theta_{\uprho}\omega)|}\|\f(\uprho+s)\|^2_{\H}\d\uprho,
\end{align}
respectively.
\end{lemma}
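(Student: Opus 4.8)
The plan is to carry out the standard energy estimates for the transformed equation \eqref{CNSE-M}, but tracking all constants carefully so that (a) they are uniform over the backward window $s\le\tau$, and (b) the absorbing radius is \emph{increasing in $\tau$}, which forces us to keep the supremum $\sup_{s\le\tau}$ \emph{inside} the definitions of $K(\tau,\omega)$ and $\widetilde K(\tau,\omega)$ rather than pulling it out. First I would take the $\H$-inner product of \eqref{CNSE-M} with $\v$, use the cancellation $\langle\B(\v),\v\rangle=0$ from \eqref{b0}, and the Poincaré inequality \eqref{poin} on the viscous term to obtain a differential inequality of the form
\begin{align*}
	\frac{\d}{\d t}\|\v(t)\|_{\H}^2 + \nu\lambda_1\|\v(t)\|_{\H}^2 + \nu\|\v(t)\|_{\V}^2 \le 2\sigma z(\theta_t\omega)\|\v(t)\|_{\H}^2 + \frac{1}{\nu\lambda_1}e^{-2z(\theta_t\omega)}\|\f(t)\|_{\H}^2,
\end{align*}
where the forcing term is handled by Cauchy--Schwarz and Young's inequality, splitting $\nu\lambda_1\|\v\|_{\H}^2$ as $\tfrac{\nu\lambda_1}{2}+\tfrac{\nu\lambda_1}{2}$ and absorbing one half. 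Then I would apply the Gronwall inequality with the integrating factor $\exp\!\big(\nu\lambda_1 t - 2\sigma\int_0^t z(\theta_r\omega)\,\d r\big)$ over the interval $[s-t,s]$ (after the shift $\omega\mapsto\theta_{-s}\omega$), which after the change of variables $\uprho=\xi-s$ and use of $\theta_{\uprho+s-s}=\theta_\uprho$ produces exactly the quantity $K(\tau,\omega)$ in \eqref{AB2}. Here the sub-exponential growth \eqref{Z3} of $z$ (more precisely \eqref{Z3}--\eqref{Z4}) guarantees that $e^{\nu\lambda_1\uprho + 2|z(\theta_\uprho\omega)| + 2\sigma\int_\uprho^0 z(\theta_\upeta\omega)\d\upeta}$ is integrable over $(-\infty,0)$, and Lemma \ref{Hypo_f1-N} (backward temperedness of $\f$) guarantees that the $\sup_{s\le\tau}$ of the resulting integral against $\|\f(\uprho+s)\|_{\H}^2$ is finite; since $D\in\mathfrak D$ is backward tempered, the contribution of the initial data $\|\v_0\|_{\H}^2$ (with $\v_0\in D(s-t,\theta_{-t}\omega)$) decays to $0$ as $t\to+\infty$, uniformly in $s\le\tau$, giving a time $\mathcal T\ge2$ after which that term is $\le1$. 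This yields \eqref{AB1}.

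For \eqref{AB11}, once \eqref{AB1} is established I would rerun the Gronwall argument but only over a short interval $[s-2,\ell]$ with $\ell\in[s-2,s]$ and initial value taken at time $s-2$ (which is already controlled by \eqref{AB1}, up to replacing $\mathcal T$ by $\mathcal T+2$); the short-time integrating factor is bounded by $e^{2\nu\lambda_1}$ times exponentials of $z$ over the length-$2$ window, and those are absorbed into the $e^{2\nu\lambda_1}$ prefactor plus the $\f$-integral over $[-2,0]$, leaving the stated bound. Finally for \eqref{AB111} I would integrate the differential inequality in its $\|\v\|_{\V}^2$ form over $[s-2,s]$: the left side gives $\nu\int_{s-2}^s\|\v\|_{\V}^2\,\d\ell$, while the right side is bounded using \eqref{AB11} for the $\|\v\|_{\H}^2$ terms (pulling out $e^{2\nu\lambda_1}[1+\tfrac{2}{\nu\lambda_1}K(\tau,\omega)]$), the term $2\sigma\int_{-2}^0|z(\theta_\uprho\omega)|\d\uprho$ from the $z$-coefficient, and the forcing integral $\sup_{s\le\tau}\int_{-2}^0 e^{2|z(\theta_\uprho\omega)|}\|\f(\uprho+s)\|_{\H}^2\d\uprho$, which is finite by Lemma \ref{Hypo_f1-N}. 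Dividing by $\nu$ and collecting terms reproduces $\widetilde K(\tau,\omega)$ in \eqref{AB22}.

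The main obstacle is \emph{not} the energy estimate itself, which is routine, but the bookkeeping that makes the bound simultaneously backward-uniform and monotone in $\tau$: one must be careful that every exponential factor involving $z(\theta_\uprho\omega)$ depends only on the \emph{shifted} time $\uprho\le0$ and not on $s$, so that after extracting the worst forcing term $\sup_{s\le\tau}\int\cdots\|\f(\uprho+s)\|_{\H}^2\d\uprho$ the remaining $\omega$-dependent coefficients are genuinely independent of $s$; this is exactly what lets the final constants $K(\tau,\omega)$, $\widetilde K(\tau,\omega)$ be written as a single $\sup_{s\le\tau}$ of an $s$-dependent integral, and it is what will later be needed (in Proposition \ref{IRAS}) to build the \emph{increasing} absorbing set. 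A secondary technical point is verifying the finiteness of $\int_{-\infty}^0 e^{\nu\lambda_1\uprho+2|z(\theta_\uprho\omega)|+2\sigma\int_\uprho^0 z(\theta_\upeta\omega)\d\upeta}\,\d\uprho$ for $\mathbb P$-a.e.\ $\omega$; this follows from \eqref{Z3} and \eqref{Z4}, since for $\uprho\to-\infty$ the dominant exponent is $\nu\lambda_1\uprho\to-\infty$ while $|z(\theta_\uprho\omega)|=o(|\uprho|)$ and $\sigma\int_\uprho^0 z(\theta_\upeta\omega)\d\upeta=o(|\uprho|)$.
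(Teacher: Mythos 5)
Your proposal follows essentially the same route as the paper's proof: the $\H$-energy identity for \eqref{CNSE-M} with the cancellation $b(\v,\v,\v)=0$, Poincar\'e plus Young on the forcing, variation of constants with the integrating factor $e^{\nu\lambda_1 t-2\sigma\int_0^t z(\theta_r\omega)\d r}$ over $[s-t,s]$ after the shift $\omega\mapsto\theta_{-s}\omega$, backward temperedness of $D$ together with \eqref{Z4} to make the initial-datum term $\leq 1$ for $t\geq\mathcal{T}$, the short-window estimate on $[s-2,s]$ costing the factor $e^{2\nu\lambda_1}$ for \eqref{AB11}, and direct integration of the differential inequality over $(s-2,s)$ for \eqref{AB111}. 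The one flaw is quantitative: your first displayed inequality cannot hold as written, because after Young's inequality you cannot simultaneously retain the full dissipation $\nu\|\v\|_{\V}^2$, extract the full decay rate $\nu\lambda_1\|\v\|_{\H}^2$, and pay only $\tfrac{1}{\nu\lambda_1}e^{-2z(\theta_t\omega)}\|\f\|_{\H}^2$ (test it with $\v$ a large multiple of the first eigenfunction of $\A$). The paper's \eqref{EI1} keeps only $\tfrac{\nu}{2}\|\v\|_{\V}^2$ and pays $\tfrac{2}{\nu\lambda_1}e^{2|z(\theta_t\omega)|}\|\f\|_{\H}^2$, which is precisely what produces the stated constants in $K(\tau,\omega)$ and $\widetilde{K}(\tau,\omega)$; with that correction your argument reproduces the lemma.
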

\begin{proof}
		From the first equation of the system \eqref{CNSE-M} and \eqref{b0}, we obtain
	
	\begin{align*}
		\frac{1}{2}\frac{\d}{\d t} \|\v\|^2_{\H} +\frac{3\nu}{4}\|\v\|^2_{\V}+\frac{\nu}{4}\|\v\|^2_{\V}&= e^{-z(\theta_{t}\omega)}\left(\f,\v\right)+\sigma z(\theta_{t}\omega)\|\v\|^2_{\H}\nonumber\\&\leq \frac{\nu\lambda_{1}}{4}\|\v\|^2_{\H}+\frac{e^{2\left|z(\theta_{t}\omega)\right|}}{\nu\lambda_{1}}\|\f\|^2_{\H}+\sigma z(\theta_{t}\omega)\|\v\|^2_{\H}.
	\end{align*}	
	Now, using \eqref{poin} in the second term on the left hand side of above inequality, we get
		\begin{align}\label{EI1}
		\frac{\d}{\d t} \|\v\|^2_{\H}+ \left(\nu\lambda_{1}-2\sigma z(\theta_{t}\omega)\right)\|\v\|^2_{\H}+\frac{\nu}{2}\|\v\|^2_{\V} \leq \frac{2e^{2\left|z(\theta_{t}\omega)\right|}}{\nu\lambda_{1}}\|\f\|^2_{\H}.
	\end{align}
	 Let us rewrite the energy inequality \eqref{EI1} for $\v(\zeta)=\v(\zeta,s-t,\theta_{-s}\omega,\v_{0})$, that is,
	\begin{align}\label{AB0}
		\frac{\d}{\d\zeta} \|\v(\zeta)\|^2_{\H}&+ \left(\nu\lambda_{1}-2\sigma z(\theta_{\zeta-s}\omega)\right)\|\v(\zeta)\|^2_{\H}+\frac{\nu}{2}\|\v(\zeta)\|^2_{\V}\leq \frac{2e^{2|z(\theta_{\zeta-s}\omega)|}}{\nu\lambda_{1}}\|\f(\zeta)\|^2_{\H}.
	\end{align}
In view of variation of constants formula with respect to $\zeta\in(s-t,\xi)$, we get
\begin{align}\label{AB3}
	&\|\v(\xi,s-t,\theta_{-s}\omega,\v_{0})\|^2_{\H}+\frac{\nu}{2}\int_{s-t}^{\xi}e^{\nu\lambda_{1}(\uprho-\xi)-2\sigma\int^{\uprho}_{\xi}z(\theta_{\upeta-s}\omega)\d\upeta}\|\v(\uprho,s-t,\theta_{-s}\omega,\v_{0})\|^2_{\V}\d\uprho\nonumber\\&\leq e^{-\nu\lambda_{1}(\xi-s+t)+2\sigma\int_{-t}^{\xi-s}z(\theta_{\upeta}\omega)\d\upeta}\|\v_{0}\|^2_{\H} + \frac{2}{\nu\lambda_{1}}\int\limits_{-t}^{\xi-s}e^{\nu\lambda_{1}(\uprho+s-\xi)+2|z(\theta_{\uprho}\omega)|+2\sigma\int_{\uprho}^{\xi-s}z(\theta_{\upeta}\omega)\d\upeta}\|\f(\uprho+s)\|^2_{\H}\d\uprho.
\end{align}
Putting $\xi=s$ in \eqref{AB3}, we find
\begin{align}\label{AB4}
	&\|\v(s,s-t,\theta_{-s}\omega,\v_{0})\|^2_{\H}+\frac{\nu}{2}\int_{s-t}^{s}e^{\nu\lambda_{1}(\uprho-s)-2\sigma\int^{\uprho}_{s}z(\theta_{\upeta-s}\omega)\d\upeta}\|\v(\uprho,s-t,\theta_{-s}\omega,\v_{0})\|^2_{\V}\d\uprho\nonumber\\&\leq e^{-\nu\lambda_{1} t+2\sigma\int_{-t}^{0}z(\theta_{\upeta}\omega)\d\upeta}\|\v_{0}\|^2_{\H} +\frac{2}{\nu\lambda_{1}}\int_{-t}^{0}e^{\nu\lambda_{1}\uprho+2|z(\theta_{\uprho}\omega)|+2\sigma\int_{\uprho}^{0}z(\theta_{\upeta}\omega)\d\upeta}\|\f(\uprho+s)\|^2_{\H}\d\uprho\nonumber\\&\leq e^{-\nu\lambda_{1} t+2\sigma\int_{-t}^{0}z(\theta_{\upeta}\omega)\d\upeta}\|\v_{0}\|^2_{\H} +\frac{2}{\nu\lambda_{1}}\int_{-\infty}^{0}e^{\nu\lambda_{1}\uprho+2|z(\theta_{\uprho}\omega)|+2\sigma\int_{\uprho}^{0}z(\theta_{\upeta}\omega)\d\upeta}\|\f(\uprho+s)\|^2_{\H}\d\uprho,
\end{align}
for all $s\leq\tau$. Since $\v_0\in D(s-t,\theta_{-t}\omega)$ and $D$ is backward tempered, it implies from \eqref{Z4} and the definition of backward temperedness \eqref{BackTem} that there exists a time $\mathcal{T}=\mathcal{T}(\tau,\omega,D)$ such that for all $t\geq \mathcal{T}$,
\begin{align}\label{v_0}
	e^{-\nu\lambda_{1} t+2\sigma\int_{-t}^{0}z(\theta_{\upeta}\omega)\d\upeta}\sup_{s\leq \tau}\|\v_{0}\|^2_{\H}\leq e^{-\frac{\nu\lambda_{1}}{3}t}\sup_{s\leq \tau}\|D(s-t,\theta_{-t}\omega)\|^2_{\H}\leq1.
\end{align}
Hence, by taking supremum on $s\in(-\infty,\tau]$ in \eqref{AB4},  we achieve at \eqref{AB1}.

 Furthermore, in view of \eqref{AB3} and \eqref{AB22}, we have for $t\geq\mathcal{T}$,
 \begin{align}\label{AB5}
 	&\sup_{s\leq \tau}\sup_{t\geq \mathcal{T}}\sup_{\v_{0}\in D(s-t,\theta_{-t}\omega)}\sup_{l\in[s-2,s]}\|\v(l,s-t,\theta_{-s}\omega,\v_{0})\|^2_{\H}\nonumber\\&\leq\sup_{l\in[s-2,s]}\left[e^{-\nu\lambda_{1}(l-s)}\right]\bigg[ 1+\frac{2}{\nu\lambda_{1}}K(\tau,\omega)\bigg]=e^{2\nu\lambda_{1}}\bigg[ 1+\frac{2}{\nu\lambda_{1}}K(\tau,\omega)\bigg],
 \end{align}
as required in \eqref{AB11}.

Now, integrating \eqref{AB0} over $(s-2,s)$ and taking supremum on $s\in(-\infty,\tau]$, we reach at
\begin{align}\label{AB6}
	&\sup_{s\leq \tau}\int_{s-2}^{s}\|\v(\uprho,s-t,\theta_{-s}\omega,\v_{0})\|^2_{\V}\d\uprho\nonumber\\&\leq \sup_{s\leq \tau}\|\v(s-2,s-t,\theta_{-s}\omega,\v_{0})\|^2_{\H} + 2\sigma\sup_{s\leq \tau}\int_{s-2}^{s}\left|z(\theta_{\uprho-s}\omega)\right|\|\v(\uprho,s-t,\theta_{-s}\omega,\v_{0})\|^2_{\H}\d\uprho\nonumber\\&\quad+\frac{2}{\nu\lambda_{1}}\sup_{s\leq \tau}\int_{s-2}^{s}e^{2|z(\theta_{\uprho-s}\omega)|}\|\f(\uprho)\|^2_{\H}\d\uprho\nonumber\\&\leq \sup_{s\leq \tau}\|\v(s-2,s-t,\theta_{-s}\omega,\v_{0})\|^2_{\H} + 2\sigma\sup_{s\leq \tau}\sup_{\uprho\in[s-2,s]}\|\v(\uprho,s-t,\theta_{-s}\omega,\v_{0})\|^2_{\H}\int_{-2}^{0}\left|z(\theta_{\uprho}\omega)\right|\d\uprho\nonumber\\&\quad+\frac{2}{\nu\lambda_{1}}\sup_{s\leq \tau}\int_{-2}^{0}e^{2|z(\theta_{\uprho}\omega)|}\|\f(\uprho+s)\|^2_{\H}\d\uprho.
\end{align}
Using \eqref{AB1} and \eqref{AB11} in \eqref{AB6}, we obtain \eqref{AB111}, which completes the proof
\end{proof}

\begin{lemma}\label{AbsorbingV}
	Suppose that $\f\in\mathrm{L}^2_{\mathrm{loc}}(\R;\H)$. Then, for each $(\tau,\omega,D)\in\R\times\Omega\times\mathfrak{D},$ there exists a time $\mathcal{T}:=\mathcal{T}(\tau,\omega,D)\geq2$ $($same as in Lemma \ref{Absorbing}$)$ such that for $\xi\in[s-1,s]$,
	\begin{align}\label{AB1-V}
		&\sup_{s\leq \tau}\sup_{t\geq \mathcal{T}}\sup_{\v_{0}\in D(s-t,\theta_{-t}\omega)}\|\v(\xi,s-t,\theta_{-s}\omega,\v_{0})\|^2_{\V}\nonumber\\&\leq \bigg[\widetilde{K}(\tau,\omega)+\frac{2}{\nu}\sup_{\zeta\in[-2,0]}\left[e^{2|\z(\theta_{\zeta}\omega)|}\right]\sup_{s\leq\tau}\int_{-2}^{0}\|\f(\zeta+s)\|^2_{\H}\d\zeta\bigg]e^{K_1(\tau,\omega)}=:\widehat{K}(\tau,\omega),
	\end{align}
and
\begin{align}\label{AB1-D(A)}
&\sup_{s\leq \tau}\sup_{t\geq \mathcal{T}}\sup_{\v_{0}\in D(s-t,\theta_{-t}\omega)}	\int_{s-1}^{s}\|\A\v(\zeta,s-t,\theta_{-s}\omega,\v_{0})\|^{2}_{\H}\d\zeta\nonumber\\&\leq \frac{\widehat{K}(\tau,\omega)}{\nu}+\frac{1}{\nu}\bigg[2\sigma\sup_{\zeta\in[-1,0]}\left\{\z(\theta_{\zeta}\omega)\right\}+Ce^{2\nu\lambda_{1}}\left\{\widehat{K}(\tau,\omega)\right\}^3\sup_{\zeta\in[-1,0]}\left\{e^{4|\z(\theta_{\zeta}\omega)|}\right\}\bigg]\nonumber\\&\quad+\frac{2}{\nu^2}\sup_{\zeta\in[-1,0]}\left[e^{2|\z(\theta_{\zeta}\omega)|}\right]\int_{s-1}^{s}\|\f(\zeta)\|^2_{\H}\d\zeta=:\widehat{K}_1(\tau,\omega),
\end{align}
where
\begin{align*}
	K_1(\tau,\omega):=2\sigma\sup_{\zeta\in[-2,0]}\left|\z(\theta_{\zeta}\omega)\right|+C\widetilde{K}(\tau,\omega)\sup_{\zeta\in[-2,0]}\left[e^{4|\z(\theta_{\zeta}\omega)|}\right]\bigg[1+\frac{2}{\nu\lambda_{1}}K(\tau,\omega)\bigg],
\end{align*}
and, $K(\tau,\omega)$ and $\widetilde{K}(\tau,\omega)$ are given by \eqref{AB2} and \eqref{AB22}, respectively.
\end{lemma}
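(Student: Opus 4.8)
The plan is to run a $\V$-level energy estimate by testing \eqref{CNSE-M} with $\A\v$, and then to convert it into the two stated bounds by a uniform Gr\"onwall argument on the finite window $[s-2,s]$, feeding in the $\H$-level estimates \eqref{AB11}--\eqref{AB111} already established in Lemma \ref{Absorbing}. Although $\v_0\in D(s-t,\theta_{-t}\omega)$ only belongs to $\H$, the parabolic smoothing recorded in Lemma \ref{Soln} ensures that, for $t\geq\mathcal{T}\geq2$, the solution $\v(\cdot,s-t,\theta_{-s}\omega,\v_0)$ is a strong solution on $[s-2,s]$, so all the manipulations below are legitimate there; the final suprema over $s\leq\tau$, $t\geq\mathcal{T}$ and $\v_0$ are taken only at the very end.

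First I would take the $\H$-inner product of the first equation in \eqref{CNSE-M} with $\A\v$; since $\A$ is non-negative self-adjoint this yields
\begin{align*}
	\frac12\frac{\d}{\d t}\|\v\|_{\V}^2+\nu\|\A\v\|_{\H}^2=-e^{z(\theta_t\omega)}\langle\B(\v),\A\v\rangle+e^{-z(\theta_t\omega)}(\f,\A\v)+\sigma z(\theta_t\omega)\|\v\|_{\V}^2.
\end{align*}
The forcing term is absorbed by Young's inequality, $|e^{-z(\theta_t\omega)}(\f,\A\v)|\leq\frac{\nu}{4}\|\A\v\|_{\H}^2+\frac{1}{\nu}e^{2|z(\theta_t\omega)|}\|\f\|_{\H}^2$. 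For the nonlinearity I would invoke the interpolation inequality \eqref{b2} with $\u_1=\u_2=\v$, $\u_3=\A\v$, giving $e^{z(\theta_t\omega)}|b(\v,\v,\A\v)|\leq Ce^{z(\theta_t\omega)}\|\v\|_{\H}^{1/2}\|\v\|_{\V}\|\A\v\|_{\H}^{3/2}$, and then Young's inequality (exponents $4/3$ and $4$) to absorb another $\frac{\nu}{4}\|\A\v\|_{\H}^2$ at the price of a term $Ce^{4z(\theta_t\omega)}\|\v\|_{\H}^2\|\v\|_{\V}^4$. Collecting the terms produces a differential inequality of the form
\begin{align*}
	\frac{\d}{\d t}\|\v\|_{\V}^2+\nu\|\A\v\|_{\H}^2\leq P(t)\|\v\|_{\V}^2+Q(t),
\end{align*}
with $P(t)=Ce^{4z(\theta_t\omega)}\|\v\|_{\H}^2\|\v\|_{\V}^2+2\sigma z(\theta_t\omega)$ and $Q(t)=\frac{2}{\nu}e^{2|z(\theta_t\omega)|}\|\f\|_{\H}^2$.

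Rewriting this inequality along $\v(\zeta)=\v(\zeta,s-t,\theta_{-s}\omega,\v_0)$ and applying the uniform Gr\"onwall lemma on the interval $(\xi-1,\xi)$ with $\xi\in[s-1,s]$ (so that $[\xi-1,\xi]\subseteq[s-2,s]$) gives $\|\v(\xi)\|_{\V}^2\leq\big(\int_{\xi-1}^{\xi}\|\v\|_{\V}^2\,\d\zeta+\int_{\xi-1}^{\xi}Q\,\d\zeta\big)e^{\int_{\xi-1}^{\xi}P\,\d\zeta}$. Each piece is then controlled uniformly in $s\leq\tau$, $t\geq\mathcal{T}$ and $\v_0$: the term $\int\|\v\|_{\V}^2$ by \eqref{AB111}; the exponent $\int P$ by combining the uniform-in-time $\H$-bound \eqref{AB11} with the $\mathrm{L}^2_t\V$-bound \eqref{AB111} (this is exactly the step where both a uniform $\H$ bound and an $\mathrm{L}^2_t\V$ bound are needed), the resulting exponent being $K_1(\tau,\omega)$; and $\int Q$ by $\f\in\mathrm{L}^2_{\mathrm{loc}}(\R;\H)$ together with the continuity of $z$ and \eqref{conv_z2}. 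Taking suprema yields \eqref{AB1-V} with $\widehat{K}(\tau,\omega)$ as stated.

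For \eqref{AB1-D(A)} I would integrate the differential inequality of the previous paragraph over $(s-1,s)$, transfer $\nu\int_{s-1}^{s}\|\A\v\|_{\H}^2\,\d\zeta$ to the left, bound $\|\v(s-1)\|_{\V}^2\leq\widehat{K}(\tau,\omega)$ by the just-proved \eqref{AB1-V}, and estimate $\int_{s-1}^{s}P\|\v\|_{\V}^2\,\d\zeta$ and $\int_{s-1}^{s}Q\,\d\zeta$ using \eqref{AB1-V}, \eqref{AB11} and $\f\in\mathrm{L}^2_{\mathrm{loc}}(\R;\H)$; dividing by $\nu$ and taking suprema gives \eqref{AB1-D(A)}. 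The main obstacle throughout is the cubic term $e^{z(\theta_t\omega)}\langle\B(\v),\A\v\rangle$: it must be split via \eqref{b2} so that the coefficient $Ce^{4z(\theta_t\omega)}\|\v\|_{\H}^2\|\v\|_{\V}^2$ in front of $\|\v\|_{\V}^2$ is integrable over bounded time windows \emph{uniformly in the backward variable} $s\leq\tau$. Since no random absorbing set in $\D(\A^{s})$, $s>1/2$, is available for the $2$D SNSE, regularity cannot be propagated from the far initial time $s-t$; the argument is therefore confined to the window $[s-2,s]$ and leans on the backward-uniform $\H$ and $\mathrm{L}^2_t\V$ estimates of Lemma \ref{Absorbing}, which is precisely why those were stated over $[s-2,s]$.
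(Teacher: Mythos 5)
Your proposal is correct and follows essentially the same route as the paper: testing \eqref{CNSE-M} with $\A\v$, controlling the trilinear term via \eqref{b2} and Young's inequality (exponents $4/3$ and $4$) to produce the coefficient $Ce^{4|z(\theta_t\omega)|}\|\v\|_{\H}^{2}\|\v\|_{\V}^{4}$, applying the uniform Gr\"onwall lemma on the window $[s-2,s]$ with the inputs \eqref{AB11}--\eqref{AB111}, and then integrating the same differential inequality over $(s-1,s)$ to obtain \eqref{AB1-D(A)}. The only cosmetic difference is that you make explicit the smoothing step needed to justify working with strong solutions on $[s-2,s]$, which the paper leaves implicit.
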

\begin{proof}
	Taking the inner product of the first equation in \eqref{CNSE-M} with $\A\v(\cdot)$, we find
	\begin{align}\label{BddV1}
		&\frac{1}{2}\frac{\d}{\d\zeta}\|\v(\zeta)\|^2_{\V}+\nu\|\A\v(\zeta)\|^2_{\H}-\sigma\z(\theta_{\zeta}\omega)\|\v\|^2_{\V}+e^{\z(\theta_{\zeta}\omega)}b(\v(\zeta),\v(\zeta),\A\v(\zeta)) \nonumber\\&= e^{-\z(\theta_{\zeta}\omega)}(\f(\zeta),\A\v(\zeta))\leq e^{-\z(\theta_{\zeta}\omega)}\|\f(\zeta)\|_{\H}\|\A\v(\zeta)\|_{\H}\nonumber\\&\leq\frac{e^{2|\z(\theta_{\zeta}\omega)|}}{\nu}\|\f(\zeta)\|^2_{\H}+\frac{\nu}{4}\|\A\v(\zeta)\|^2_{\H}.
	\end{align}
	By \eqref{b2} and Young's inequality, we have
	\begin{align}\label{BddV2}
		\left|e^{\z(\theta_{\zeta}\omega)}b(\v,\v,\A\v)\right|&\leq Ce^{|\z(\theta_{\zeta}\omega)|}\|\v\|^{1/2}_{\H}\|\v\|_{\V}\|\A\v\|^{3/2}_{\H}\nonumber\\&\leq Ce^{4|\z(\theta_{\zeta}\omega)|}\|\v\|^{2}_{\H}\|\v\|^{4}_{\V}+\frac{\nu}{4}\|\A\v\|^{2}_{\H}.
	\end{align}
	From \eqref{BddV1}-\eqref{BddV2}, we obtain
	\begin{align}\label{BddV2'}
		&\frac{\d}{\d\zeta}\|\v(\zeta)\|^2_{\V}+\nu\|\A\v(\zeta)\|^2_{\H}\nonumber\\&\leq2\sigma\z(\theta_{\zeta}\omega)\|\v(\zeta)\|^2_{\V}+\frac{2e^{2|\z(\theta_{\zeta}\omega)|}}{\nu}\|\f(\zeta)\|^2_{\H}+Ce^{4|\z(\theta_{\zeta}\omega)|}\|\v(\zeta)\|^{2}_{\H}\|\v(\zeta)\|^{4}_{\V}.
	\end{align}
Replacing $\omega$ with $\theta_{-s}\omega$ in the above inequality, we find
	\begin{align}\label{BddV3}
	&\frac{\d}{\d\zeta}\|\v(\zeta,s-t,\theta_{-s}\omega,\v_{0})\|^2_{\V}\nonumber\\&\leq\bigg[2\sigma\z(\theta_{\zeta-s}\omega)+Ce^{4|\z(\theta_{\zeta-s}\omega)|}\|\v(\zeta,s-t,\theta_{-s}\omega,\v_{0})\|^{2}_{\H}\|\v(\zeta,s-t,\theta_{-s}\omega,\v_{0})\|^{2}_{\V}\bigg]\nonumber\\&\quad\times\|\v(\zeta,s-t,\theta_{-s}\omega,\v_{0})\|^2_{\V}+\frac{2e^{2|\z(\theta_{\zeta-s}\omega)|}}{\nu}\|\f(\zeta)\|^2_{\H}.
\end{align}
Using \eqref{AB11} and \eqref{AB111}, for $t\geq \mathcal{T}$ and $\v_{0}\in D(s-t,\theta_{-t}\omega)$, we obtain for $\xi\in[s-1,s]$,
\begin{align}
	&\sup_{s\leq\tau}\int_{s-2}^{\xi}\|\v(\zeta,s-t,\theta_{-s}\omega,\v_{0})\|^2_{\V}\d\zeta\leq\sup_{s\leq\tau}\int_{s-2}^{s}\|\v(\zeta,s-t,\theta_{-s}\omega,\v_{0})\|^2_{\V}\d\zeta\leq\widetilde{K}(\tau,\omega),\label{BddV4}\\
	\nonumber\\
	&\sup_{s\leq\tau}\int_{s-2}^{\xi}\bigg[2\sigma\z(\theta_{\zeta-s}\omega)+Ce^{4|\z(\theta_{\zeta-s}\omega)|}\|\v(\zeta,s-t,\theta_{-s}\omega,\v_{0})\|^{2}_{\H}\|\v(\zeta,s-t,\theta_{-s}\omega,\v_{0})\|^{2}_{\V}\bigg]\d\zeta\nonumber\\&\leq\sup_{s\leq\tau}\int_{s-2}^{s}\bigg[2\sigma\z(\theta_{\zeta-s}\omega)\nonumber\\&\qquad\qquad\qquad+Ce^{4|\z(\theta_{\zeta-s}\omega)|}\|\v(\zeta,s-t,\theta_{-s}\omega,\v_{0})\|^{2}_{\H}\|\v(\zeta,s-t,\theta_{-s}\omega,\v_{0})\|^{2}_{\V}\bigg]\d\zeta\nonumber\\&\leq2\sigma\sup_{\zeta\in[-2,0]}\left|\z(\theta_{\zeta}\omega)\right|\nonumber\\&\quad+C\sup_{\zeta\in[-2,0]}\left[e^{4|\z(\theta_{\zeta}\omega)|}\right]\sup_{\zeta\in[s-2,s]}\left[\|\v(\zeta,s-t,\theta_{-s}\omega,\v_{0})\|^{2}_{\H}\right]\int_{s-2}^{s}\|\v(\zeta,s-t,\theta_{-s}\omega,\v_{0})\|^2_{\V}\d\zeta\nonumber\\&\leq 2\sigma\sup_{\zeta\in[-2,0]}\left|\z(\theta_{\zeta}\omega)\right|+C\widetilde{K}(\tau,\omega)\sup_{\zeta\in[-2,0]}\left[e^{4|\z(\theta_{\zeta}\omega)|}\right]\bigg[1+\frac{2}{\nu\lambda_{1}}K(\tau,\omega)\bigg]:=K_1(\tau,\omega)<+\infty,\label{BddV5}
\end{align}
and
\begin{align}
	\sup_{s\leq\tau}\int_{s-2}^{\xi}\frac{2e^{2|\z(\theta_{\zeta-s}\omega)|}}{\nu}\|\f(\zeta)\|^2_{\H}\d\zeta&\leq\sup_{s\leq\tau}\int_{s-2}^{s}\frac{2e^{2|\z(\theta_{\zeta-s}\omega)|}}{\nu}\|\f(\zeta)\|^2_{\H}\d\zeta \nonumber\\&\leq\frac{2}{\nu}\sup_{\zeta\in[-2,0]}\left[e^{2|\z(\theta_{\zeta}\omega)|}\right]\sup_{s\leq\tau}\int_{-2}^{0}\|\f(\zeta+s)\|^2_{\H}\d\zeta.\label{BddV6}
\end{align}
Consider,
\begin{align*}
	\sup_{s\leq\tau}\int_{-2}^{0}\|\f(\zeta+s)\|^2_{\H}\d\zeta&\leq e^{2\lambda_{1}}\sup_{s\leq\tau}\int_{-2}^{0}e^{\lambda_{1}\zeta}\|\f(\zeta+s)\|^2_{\H}\d\zeta\nonumber\\&\leq e^{2\lambda_{1}}\sup_{s\leq\tau}\int_{-\infty}^{0}e^{\lambda_{1}\zeta}\|\f(\zeta+s)\|^2_{\H}\d\zeta<+\infty,
\end{align*}
where we have used \eqref{f2-N}. Hence, in view of \eqref{BddV4}-\eqref{BddV6}, we apply the uniform Gronwall lemma (Lemma 1.1, \cite{R.Temam}) to the inequality \eqref{BddV3} to obtain \eqref{AB1-V}.

Further, integrating \eqref{BddV2'} from $s-1$ to $s$, we get
\begin{align*}
	&\int_{s-1}^{s}\|\A\v(\zeta,s-t,\theta_{-s}\omega,\v_{0})\|^{2}_{\H}\d\zeta\nonumber\\&\leq\frac{1}{\nu}\|\v(s-1,s-t,\theta_{-s}\omega,\v_{0})\|^2_{\V}\nonumber\\&\quad+\frac{1}{\nu}\bigg[2\sigma\sup_{\zeta\in[-1,0]}\left[\z(\theta_{\zeta}\omega)\right]+C\sup_{\zeta\in[-1,0]}\left[e^{4|\z(\theta_{\zeta}\omega)|}\right]\sup_{\zeta\in[s-1,s]}\|\v(\zeta,s-t,\theta_{-s}\omega,\v_{0})\|^{6}_{\V}\bigg]\nonumber\\&\quad+\frac{2}{\nu^2}\sup_{\zeta\in[-1,0]}\left[e^{2|\z(\theta_{\zeta}\omega)|}\right]\int_{s-1}^{s}\|\f(\zeta)\|^2_{\H}\d\zeta.
\end{align*}
Hence, using \eqref{AB1-V}, we arrive at \eqref{AB1-D(A)}, which completes the proof.
\end{proof}

\begin{proposition}\label{IRAS}
	Suppose that $\f\in\mathrm{L}^2_{\mathrm{loc}}(\R;\H)$. Then, there is an increasing random absorbing set $\mathcal{K}_{\H}\subset\H$ given by
	\begin{align}\label{IRAS1}
		\mathcal{K}_{\H}(\tau,\omega):=\left\{\u\in\H:\|\u\|^2_{\H}\leq e^{z(\omega)}\left[1+\frac{2}{\nu\lambda_{1}}K(\tau,\omega)\right]\right\}, \ \text{ for all } \ \tau\in\R,
	\end{align}
	where $K(\tau,\omega)$ is defined by \eqref{AB2}. Moreover, $\mathcal{K}_{\H}$ is backward tempered, that is, $\mathcal{K}_{\H}\in\mathfrak{D}$.
\end{proposition}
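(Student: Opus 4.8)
The plan is to read the absorbing property directly off the uniform bound \eqref{AB1} of Lemma~\ref{Absorbing} through the conjugation $\u=e^{z(\theta_{\cdot}\omega)}\v$, and then to deal with backward temperedness of the radius and, above all, with its $\mathscr{F}$-measurability. Fix $(\tau,\omega)\in\R\times\Omega$ and $\mathcal{D}\in\mathfrak{D}$. By \eqref{Phi} together with $\theta_{t}(\theta_{-t}\omega)=\omega$ and $\theta_{-(\tau-t)}(\theta_{-t}\omega)=\theta_{-\tau}\omega$, for any $\u_{\tau-t}\in\mathcal{D}(\tau-t,\theta_{-t}\omega)$ one has $\Phi(t,\tau-t,\theta_{-t}\omega,\u_{\tau-t})=e^{z(\omega)}\v(\tau,\tau-t,\theta_{-\tau}\omega,\v_{0})$ with $\v_{0}:=e^{-z(\theta_{-t}\omega)}\u_{\tau-t}$. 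The proof of Lemma~\ref{Absorbing} then applies verbatim to this $\v_{0}$: the extra factor $e^{-2z(\theta_{-t}\omega)}$ in $\|\v_{0}\|_{\H}^{2}$ grows sub-exponentially by \eqref{Z3} and is therefore absorbed, for $t$ large, by the slack $\nu\lambda_{1}-\tfrac{\nu\lambda_{1}}{3}=\tfrac{2\nu\lambda_{1}}{3}$ in \eqref{v_0}, using $\mathcal{D}\in\mathfrak{D}$. Hence there is $\mathcal{T}=\mathcal{T}(\tau,\omega,\mathcal{D})$ such that \eqref{AB1} with $s=\tau$ gives $\|\v(\tau,\tau-t,\theta_{-\tau}\omega,\v_{0})\|_{\H}^{2}\le1+\tfrac{2}{\nu\lambda_{1}}K(\tau,\omega)$ for all $t\ge\mathcal{T}$, so that $\Phi(t,\tau-t,\theta_{-t}\omega,\u_{\tau-t})$ lies in the closed ball $\mathcal{K}_{\H}(\tau,\omega)$ of \eqref{IRAS1}. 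Thus $\mathcal{K}_{\H}$ is $\mathfrak{D}$-pullback absorbing, and it is increasing since $K(\tau,\omega)=\sup_{s\le\tau}(\cdots)$ is nondecreasing in $\tau$, whence $\overline{\bigcup_{s\le\tau}\mathcal{K}_{\H}(s,\omega)}=\mathcal{K}_{\H}(\tau,\omega)$.

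To see $\mathcal{K}_{\H}\in\mathfrak{D}$, note that $\|\mathcal{K}_{\H}(s-t,\theta_{-t}\omega)\|_{\H}^{2}=e^{z(\theta_{-t}\omega)}\big(1+\tfrac{2}{\nu\lambda_{1}}K(s-t,\theta_{-t}\omega)\big)$, and $e^{-\frac{\nu\lambda_{1}}{3}t}e^{z(\theta_{-t}\omega)}\to0$ by \eqref{Z3}. For the $K$-term, substituting $z(\theta_{\rho}(\theta_{-t}\omega))=z(\theta_{\rho-t}\omega)$ into \eqref{AB2} and changing variables $\mu=\rho-t$ produces a factor $e^{\nu\lambda_{1}t}$ together with the sub-exponential factor $e^{-2\sigma\int_{-t}^{0}z(\theta_{\eta}\omega)\d\eta}$ and an integral $\int_{-\infty}^{-t}e^{\nu\lambda_{1}\mu+2|z(\theta_{\mu}\omega)|+2\sigma\int_{\mu}^{0}z(\theta_{\eta}\omega)\d\eta}\|\f(\mu+s)\|_{\H}^{2}\d\mu$; bounding the $z$-weights in the exponent by $(\nu\lambda_{1}-\gamma)|\mu|+C_{\gamma}(\omega)$ for $\mu\le0$ via \eqref{Z3}--\eqref{Z4} and invoking Lemma~\ref{Hypo_f1-N}, this integral is $\le C_{\gamma}(\omega)\F(\gamma,\tau)e^{-\gamma t}$, so that $\sup_{s\le\tau}K(s-t,\theta_{-t}\omega)$ grows no faster than $e^{(\nu\lambda_{1}-\gamma)t}$ up to sub-exponential factors. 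Choosing $\gamma\in(\tfrac{2\nu\lambda_{1}}{3},\nu\lambda_{1})$ makes $\nu\lambda_{1}-\gamma<\tfrac{\nu\lambda_{1}}{3}$, and then $e^{-\frac{\nu\lambda_{1}}{3}t}\sup_{s\le\tau}\|\mathcal{K}_{\H}(s-t,\theta_{-t}\omega)\|_{\H}^{2}\to0$, i.e. $\mathcal{K}_{\H}\in\mathfrak{D}$.

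The main obstacle is to show $\mathcal{K}_{\H}$ is a \emph{random} set, i.e. that $\omega\mapsto K(\tau,\omega)$ is $\mathscr{F}$-measurable for each $\tau$ (which then makes $\omega\mapsto\mathrm{dist}_{\H}(x,\mathcal{K}_{\H}(\tau,\omega))$ measurable for every $x\in\H$, since $\omega\mapsto z(\omega)$ is measurable). For fixed $s$, $\omega\mapsto g(s,\omega):=\int_{-\infty}^{0}e^{\nu\lambda_{1}\rho+2|z(\theta_{\rho}\omega)|+2\sigma\int_{\rho}^{0}z(\theta_{\eta}\omega)\d\eta}\|\f(\rho+s)\|_{\H}^{2}\,\d\rho$ is measurable --- even continuous on each $\Omega_{N}$, by Lemma~\ref{conv_z} and dominated convergence --- but $K(\tau,\omega)=\sup_{s\le\tau}g(s,\omega)$ is a supremum over the \emph{uncountable} set $(-\infty,\tau]$, and since $\f\in\mathrm{L}^{2}_{\mathrm{loc}}(\R;\H)$ need not render $s\mapsto g(s,\omega)$ continuous, this supremum cannot simply be replaced by one over $\mathbb{Q}\cap(-\infty,\tau]$. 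This is precisely the difficulty resolved in \cite[Proposition~3.1]{YR}, and I would follow that route: using $\Omega=\bigcup_{N}\Omega_{N}$ together with Egoroff's and Lusin's theorems and the continuity furnished by Lemma~\ref{conv_z} and Proposition~\ref{LusinC}, one obtains for each $N$ and $\delta>0$ a measurable $\Omega_{N,\delta}\subset\Omega_{N}$ with $\mathbb{P}(\Omega_{N}\setminus\Omega_{N,\delta})<\delta$ on which $\{g(s,\cdot)\}_{s\le\tau}$ is well enough controlled that $\sup_{s\le\tau}g(s,\cdot)$ coincides with $\sup_{s\in\mathbb{Q}\cap(-\infty,\tau]}g(s,\cdot)$, hence is $\mathscr{F}$-measurable; letting $\delta\downarrow0$ and $N\to+\infty$ then yields $\mathscr{F}$-measurability of $\omega\mapsto K(\tau,\omega)$ up to a $\mathbb{P}$-null set, and therefore that $\mathcal{K}_{\H}$ is a random set. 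I expect this measurability step --- rather than the energy estimates, which are already available from Lemma~\ref{Absorbing} --- to be the crux of the proof.
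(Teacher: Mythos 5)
Your proposal follows essentially the same route as the paper: absorption is read off Lemma \ref{Absorbing} via the conjugation \eqref{Phi}, backward temperedness comes from the monotonicity of $\tau\mapsto K(\tau,\omega)$ together with the decay estimate that the paper records as \eqref{K-T}, and the measurability of the uncountable supremum is delegated to the Egoroff--Lusin argument of Proposition 3.1 in \cite{YR}, exactly as the authors do. The only difference is one of emphasis: you carry out the temperedness computation inside the proposition itself, whereas the paper states it tersely here and defers the explicit estimate to the verification of $(b_2)$ in Theorem \ref{MT1}.
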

\begin{proof}
	Using \eqref{Z3}, \eqref{Z4} and \eqref{f2-N}, we obtain that
	\begin{align}\label{Absorbing-R}
		K(\tau,\omega)\leq C\sup_{s\leq \tau}\int_{-\infty}^{0} e^{\frac{\nu\lambda_{1}}{2}\uprho}\|\f(\uprho+s)\|^2_{\H}\d\uprho<+\infty,
	\end{align}
	and one can deduce that $\mathcal{K}_{\H}$ is tempered. Further, $\mathcal{K}_{\H}(\tau,\omega)$ is increasing due to the fact that $\tau\mapsto K(\tau,\omega)$ is an increasing function. Then, $\mathcal{K}_{\H}$ is an increasing tempered set which gives the backward temperedness of $\mathcal{K}_{\H}(\tau,\omega)$, that is, $\mathcal{K}_{\H}\in\mathfrak{D}$. We have from Lemma \ref{Absorbing} that for each $\tau\in\R$, $\omega\in\Omega$ and $D\in\mathfrak{D}$, there exists $\mathcal{T}=\mathcal{T}(\tau,\omega,D)\geq2$ such that for all $t\geq\mathcal{T}$,
	\begin{align*}
		\Phi(t,\tau-t,\theta_{-t}\omega,D(\tau-t,\theta_{-t}\omega))=\u(\tau,\tau-t,\theta_{-\tau}\omega,D(\tau-t,\theta_{-t}\omega))\subseteq\mathcal{K}_{\H}(\tau,\omega),
	\end{align*}
	and  the absorption follows. The measurability of the absorbing set $\mathcal{K}_{\H}$ is not immediate. One has to carefully treat the supremum when $s\in(-\infty,\tau]$, that is, $s$ belongs to an uncountable interval. Since the steps are analogous to  Proposition 3.1, \cite{YR}, we omit it here. It can be proved using Egoroff and Lusin theorems as done  in Proposition 3.1, \cite{YR}.
\end{proof}
	\subsubsection{Backward compact random attractors and their asymptotic autonomy}
	In this subsection, we demonstrate the main result of this section, that is, the existence of a backward compact random attractor and asymptotic autonomy of random attractors in $\H$ for the solution of the system \eqref{SNSE}. For the existence of a unique random attractor for autonomous SNSE driven by multiplicative noise on bounded domains, see \cite{CF}.

\begin{theorem}\label{MT1}
	Suppose that Hypothesis \ref{Hypo_f-N} is satisfied. Then, the non-autonomous random dynamical system $\Phi$ generated by the system \eqref{SNSE} with $S(\u)=\u$ has a backward compact random attractor $\mathcal{A}$ in $\H$ such that $\mathcal{A}$ is a backward tempered set. Furthermore, this backward compact random attractor $\mathcal{A}$ backward converges to $\mathcal{A}_{\infty}$, that is,
	\begin{align}\label{MT2}
		\lim_{\tau\to -\infty}\mathrm{dist}_{\H}(\mathcal{A}(\tau,\omega),\mathcal{A}_{\infty}(\omega))=0\  \emph{ for all } \ \omega\in\Omega,
	\end{align}
	where $\mathcal{A}_{\infty}$ is the random attractor for the autonomous system \eqref{A-CNSE-M}. For any sequence $\tau_{n}\to-\infty$ as $n\to+\infty$, there is a subsequence $\{\tau_{n_k}\}\subseteq\{\tau_{n}\}$  such that
	\begin{align}\label{MT3}
		\lim_{k\to +\infty}\mathrm{dist}_{\H}(\mathcal{A}(\tau_{n_k},\theta_{\tau_{n_k}}\omega),\mathcal{A}_{\infty}(\theta_{\tau_{n_k}}\omega))=0, \  \emph{ for all } \ \omega\in\Omega.
	\end{align}
\end{theorem}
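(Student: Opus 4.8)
The plan is to obtain Theorem \ref{MT1} as a direct application of the abstract result Theorem \ref{Abstract-result} to the NRDS $\Phi$ of \eqref{Phi} on $\mathbb{U}=\H$, with the universe $\mathfrak{D}$ from \eqref{D-NSE}. Condition ($a_1$) is already in hand: Proposition \ref{IRAS} provides the closed, increasing, backward-tempered random absorbing set $\mathcal{K}_{\H}\in\mathfrak{D}$ of \eqref{IRAS1}, whose measurability is handled (as sketched there) by the Egoroff/Lusin argument of Proposition 3.1 in \cite{YR}. It then remains to verify ($a_2$), ($b_1$), ($b_2$) and read off \eqref{MT2}--\eqref{MT3}.

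For ($a_2$), I would take a sequence $\{\Phi(t_n,s_n-t_n,\theta_{-t_n}\omega)x_n\}$ with $s_n\le\tau$, $t_n\to+\infty$, $x_n\in\mathcal{D}(s_n-t_n,\theta_{-t_n}\omega)$ for some $\mathcal{D}\in\mathfrak{D}$. Rewriting $\Phi$ through the Ornstein--Uhlenbeck conjugation via \eqref{Phi}, this sequence equals $e^{z(\omega)}\v(s_n,s_n-t_n,\theta_{-s_n}\omega,\v_0^n)$, where the conjugated data $\v_0^n$ still belong to a backward-tempered set because the factor $e^{-z}$ is tempered; hence $\mathcal{D}$ transformed lies in $\mathfrak{D}$. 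Lemma \ref{AbsorbingV} (evaluated at the endpoint $\xi=s$) then bounds $\|\v(s_n,s_n-t_n,\theta_{-s_n}\omega,\v_0^n)\|_{\V}^2$ uniformly in $n$ once $t_n\ge\mathcal{T}$, so the compact Sobolev embedding $\V\hookrightarrow\H$ produces a subsequence converging in $\H$; multiplication by the fixed scalar $e^{z(\omega)}$ does not affect this. This gives ($a_2$) and, by the first half of Theorem \ref{Abstract-result}, a unique backward compact random attractor $\mathcal{A}\in\mathfrak{D}$, which in particular is a backward tempered set.

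For ($b_1$), Proposition \ref{Back_conver} already yields the backward convergence of the transformed solution $\v$ of \eqref{CNSE-M} to the autonomous $\widetilde{\v}$ of \eqref{A-CNSE-M}. I would transfer this to $\Phi$ and the autonomous RDS $\varphi_{\infty}$ of \eqref{A-SNSE-M} by multiplying through by $e^{z(\theta_t\omega)}$, which is continuous and finite in $t$ and does not involve $\tau$: if $\|x_\tau-x_0\|_{\H}\to0$ then the corresponding conjugated data converge in $\H$ as $\tau\to-\infty$, and Proposition \ref{Back_conver} gives $\|\Phi(t,\tau,\omega)x_\tau-\varphi_{\infty}(t,\omega)x_0\|_{\H}\to0$ for each $t\ge0$ and $\omega\in\Omega$. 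The autonomous $\varphi_{\infty}$ has a $\mathfrak{D}_{\infty}$-random attractor $\mathcal{A}_{\infty}$ by \cite{CF}. For ($b_2$), since $\tau\mapsto K(\tau,\omega)$ in \eqref{AB2} is increasing, $\mathcal{K}_{\H}(\tau,\omega)$ is increasing in $\tau$, so $\mathcal{K}_{\tau_0}(\omega)=\bigcup_{\tau\le\tau_0}\mathcal{K}_{\H}(\tau,\omega)=\mathcal{K}_{\H}(\tau_0,\omega)$, which is tempered by Proposition \ref{IRAS}; hence $\mathcal{K}_{\tau_0}\in\mathfrak{D}_{\infty}$ for any $\tau_0<0$. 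All hypotheses of Theorem \ref{Abstract-result} are met, and its conclusion yields \eqref{MT2} together with the subsequence statement \eqref{MT3}.

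The real content of the argument sits in the preparatory results rather than here: the increasing absorbing set of Proposition \ref{IRAS} with its delicate measurability, and the $\V$-level (and $\D(\A)$-level) estimates of Lemma \ref{AbsorbingV} that power the compactness. Within this proof the only genuine care needed is the bookkeeping around the conjugation $\u=e^{z(\theta_t\omega)}\v$ --- checking that it preserves backward temperedness of sets, commutes with the compactness extraction as a fixed bounded factor, and carries the $\v$-level convergence of Proposition \ref{Back_conver} to the $\u$-level statement ($b_1$) --- and matching the shift conventions $\Phi(t,\tau,\omega,\cdot)=\u(t+\tau,\tau,\theta_{-\tau}\omega,\cdot)$ against the precise form of the hypotheses in Theorem \ref{Abstract-result}; I expect this bookkeeping, rather than any new estimate, to be the main thing to get right.
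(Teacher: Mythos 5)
Your overall route is exactly the paper's: apply Theorem \ref{Abstract-result} on $\mathbb{U}=\H$ with the universe \eqref{D-NSE}, getting $(a_1)$ from Proposition \ref{IRAS}, $(a_2)$ from the $\V$-bound of Lemma \ref{AbsorbingV} plus the compact embedding $\V\hookrightarrow\H$, and $(b_1)$ from Proposition \ref{Back_conver}; the bookkeeping you describe around the conjugation $\u=e^{z(\theta_t\omega)}\v$ is also what the paper implicitly does.

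The one step that does not go through as written is your verification of $(b_2)$. You claim $\mathcal{K}_{-1}(\omega)=\mathcal{K}_{\H}(-1,\omega)$ lies in $\mathfrak{D}_{\infty}$ because it ``is tempered by Proposition \ref{IRAS}.'' But Proposition \ref{IRAS} asserts $\mathcal{K}_{\H}\in\mathfrak{D}$, i.e.\ backward temperedness, which controls $e^{-\frac{\nu\lambda_1}{3}t}\sup_{s\le\tau}\|\mathcal{K}_{\H}(s-t,\theta_{-t}\omega)\|^2_{\H}$ --- the first argument recedes to $-\infty$ together with the shift. Membership in $\mathfrak{D}_{\infty}$ requires instead that $e^{-\frac{\nu\lambda_1}{3}t}\|\mathcal{K}_{\H}(-1,\theta_{-t}\omega)\|^2_{\H}\to 0$ with the first argument \emph{frozen} at $-1$; since $\mathcal{K}_{\H}$ is increasing in its first argument, monotonicity gives $\|\mathcal{K}_{\H}(-1-t,\theta_{-t}\omega)\|_{\H}\le\|\mathcal{K}_{\H}(-1,\theta_{-t}\omega)\|_{\H}$, so the known decay bounds the smaller quantity and the implication runs in the wrong direction. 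What is actually needed is the explicit estimate the paper carries out in \eqref{K-T}: after a change of variables in \eqref{AB2} one shows
$e^{-\frac{\nu\lambda_{1}}{3}t}K(-1,\theta_{-t}\omega)\leq e^{-\frac{\nu\lambda_{1}}{12}t}\sup_{s\leq -1}\int_{-\infty}^{s}e^{\frac{3\nu\lambda_{1}}{4}(\uprho-s)}\|\f(\uprho)\|^2_{\H}\d\uprho\to 0$ as $t\to+\infty$,
using \eqref{Z3}, \eqref{Z4} to absorb the sublinear growth of $|z(\theta_{\cdot-t}\omega)|$ and $\int z(\theta_{\cdot-t}\omega)$, and Lemma \ref{Hypo_f1-N} for the finiteness of the resulting supremum. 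This is a routine but genuine computation; once it is inserted, the rest of your argument is complete.
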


\begin{proof}
	In order to complete the proof, we use an abstract result proved in \cite{YR} (see Theorem \ref{Abstract-result}). In order to do this, we verify  all the assumptions $(a_1)$, $(a_2)$, $(b_1)$ and $(b_2)$ of Theorem \ref{Abstract-result}.
	\vskip 2mm
	\noindent
	\textit{Verification of $(a_1)$ assumption}: It implies from Proposition \ref{IRAS} that NRDS $\Phi$ generated by the system \eqref{SNSE} with $S(\u)=\u$ has  an increasing random absorbing set $\mathcal{K}$ which is backward tempered, that is, $\mathcal{K}\in\mathfrak{D}$. Hence, the condition  $(a_1)$  of Theorem \ref{Abstract-result} is verified.
	\vskip 2mm
	\noindent
	\textit{Verification of $(a_2)$ assumption}: Next, we establish that $\Phi$ is $\mathfrak{D}$-backward asymptotically compact in $\H$.
	
	For this purpose, let us fix $(\tau,\omega,D)\in\R\times\Omega\times\mathfrak{D}$ and take arbitrary sequences $s_n\leq\tau$, $\tau_{n}\to+\infty$ and $\v_{0,n}\in D(s_n-t_n,\theta_{-t_n}\omega)$. We show that the sequence $$\{\v(s_n,s_n-t_n,\theta_{-s_n}\omega,\v_{0,n})\}_{n\in\N}$$ is precompact. Lemma \ref{AbsorbingV} implies that there exists an $\mathcal{N}_0\in\N$ such that $t_n\geq \mathcal{T}(\tau,\omega,D)$ and the sequence $\{\v(s_n,s_n-t_n,\theta_{-s_n}\omega,\v_{0,n})\}_{n\in\mathcal{N}_0}$ is bounded in $\V$. In bounded domains, $\V$ is compactly embedded in $\H$, and it  implies that the sequence $$\{\v(s_n,s_n-t_n,\theta_{-s_n}\omega,\v_{0,n})\}_{n\in\N}$$ has a strongly convergent subsequence in $\H$. Hence, $\Phi$ is $\mathfrak{D}$-backward asymptotically compact in $\H$.
	
	Up to this point, we have shown that the assumptions $(a_1)$ and $(a_2)$ of Theorem \ref{Abstract-result} are satisfied. Therefore, the NRDS $\Phi$ generated by SNSE has a unique backward compact random attractor $\mathcal{A}\in\mathfrak{D}$.
	
	\vskip 2mm
	\noindent
	\textit{Verification of $(b_1)$ assumption}: Proposition \ref{Back_conver} gives the backward convergence of the NRDS $\Phi$ to the RDS $\Phi_{\infty}$, that is, the assumption $(b_2)$ of Theorem \ref{Abstract-result} is verified.
	\vskip 2mm
	\noindent
	\textit{Verification of $(b_2)$ assumption}: Now, it is only remain to verify the assumption $(b_2)$ of Theorem \ref{Abstract-result}, that is, $\mathcal{K}_{-1}\in\mathfrak{D}_{\infty}$, where $\mathcal{K}_{-1}(\omega)=\cup_{\tau\leq-1}\mathcal{K}(\tau,\omega)$. Since, $\mathcal{K}(\tau,\omega)$ is increasing in the $\tau$, which implies that $\mathcal{K}_{-1}(\omega)=\mathcal{K}(-1,\omega)$. From the definition \eqref{IRAS1} of $\mathcal{K}$ , we obtain
	\begin{align*}
		e^{-\frac{\nu\lambda_{1}}{3}t}\|\mathcal{K}_{-1}(\theta_{-t}\omega)\|^2_{\H}\leq e^{-\frac{\nu\lambda_{1}}{3}t}+Ce^{-\frac{\nu\lambda_{1}}{3}t}K(-1,\theta_{-t}\omega).
		\end{align*}
	By \eqref{Z3}, \eqref{Z4} and the backward temperedness of $\f$ (see Lemma \ref{Hypo_f1-N}), we get
	\begin{align}\label{K-T}
		e^{-\frac{\nu\lambda_{1}}{3}t}K(-1,\theta_{-t}\omega)&=e^{-\frac{\nu\lambda_{1}}{3}t}\sup_{s\leq -1}\int_{-\infty}^{0}e^{\nu\lambda_{1}\uprho+2|z(\theta_{\uprho-t}\omega)|+2\sigma\int_{\uprho}^{0}z(\theta_{\upeta-t}\omega)\d\upeta}\|\f(\uprho+s)\|^2_{\H}\d\uprho\nonumber\\&=e^{-\frac{\nu\lambda_{1}}{3}t}\sup_{s\leq -1}\int_{-\infty}^{s}e^{\nu\lambda_{1}(\uprho-s)+2|z(\theta_{\uprho-s-t}\omega)|+2\sigma\int_{\uprho-s-t}^{-t}z(\theta_{\upeta}\omega)\d\upeta}\|\f(\uprho)\|^2_{\H}\d\uprho\nonumber\\&\leq e^{-\frac{\nu\lambda_{1}}{12}t}\sup_{s\leq -1}\int_{-\infty}^{s}e^{\frac{3\nu\lambda_{1}}{4}(\uprho-s)}\|\f(\uprho)\|^2_{\H}\d\uprho\to 0 \text{ as } t\to +\infty,
	\end{align}
which conclude that $\mathcal{K}_{-1}\in\mathfrak{D}_{\infty}$. Hence, the convergences \eqref{MT2} and \eqref{MT3} follow from Theorem \ref{Abstract-result} immediately.
\end{proof}

\begin{remark}
	
\end{remark}

\subsection{Existence and asymptotic autonomy of random attractors in $\V$}
In this section, we prove the existence of backward compact random attractors and their asymptotic autonomy in $\V$. First, we obtain the results which  help us to verify the assumptions of the abstract result established in Theorem \ref{Abstract-result}. Next result shows the Lusin continuity of the mapping with respect to $\omega\in\Omega$ of solution to the system \eqref{CNSE-M}.
\begin{proposition}\label{LusinC-V}
	Suppose that $\f\in\mathrm{L}^2_{\mathrm{loc}}(\R;\H)$. For each $N\in\N$, the mapping $\omega\mapsto\v(t,\tau,\omega,\v_{\tau})$ $($solution of \eqref{CNSE-M}$)$ is continuous from $(\Omega_{N},d_{\Omega_N})$ to $\V$, uniformly in $t\in[\tau,\tau+T]$ with $T>0$.
\end{proposition}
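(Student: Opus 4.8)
The plan is to imitate the proof of Proposition~\ref{LusinC}, replacing the $\H$-energy estimate for the difference of two solutions by a $\V$-energy estimate. Fix $N\in\N$ and $\omega_k,\omega_0\in\Omega_N$ with $d_{\Omega_N}(\omega_k,\omega_0)\to0$, and put $\mathscr{V}^k:=\v^k-\v^0$ with $\v^k=\v(t,\tau,\omega_k,\v_\tau)$ and $\v^0=\v(t,\tau,\omega_0,\v_\tau)$. Since $\v_\tau\in\V$, Lemma~\ref{Soln} ensures that both are strong solutions, so $\v^k,\v^0\in\mathrm{C}([\tau,+\infty);\V)\cap\mathrm{L}^2_{\mathrm{loc}}(\tau,+\infty;\D(\A))$, and $\mathscr{V}^k$ still satisfies \eqref{LC1} in $\V'$. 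First I would test \eqref{LC1} against $\A\mathscr{V}^k$ in $\H$ --- legitimate since $\mathscr{V}^k(t)\in\D(\A)$ for a.e.\ $t$ and $\frac{\d}{\d t}\mathscr{V}^k\in\mathrm{L}^2_{\mathrm{loc}}(\tau,+\infty;\H)$ --- and use $(\A\mathscr{V}^k,\mathscr{V}^k)=\|\mathscr{V}^k\|_\V^2$ together with the splitting $\B(\v^k)-\B(\v^0)=\B(\v^k,\mathscr{V}^k)+\B(\mathscr{V}^k,\v^0)$ to obtain an identity for $\frac12\frac{\d}{\d t}\|\mathscr{V}^k\|_\V^2+\nu\|\A\mathscr{V}^k\|_\H^2$.

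Next I would bound every term on the right using \eqref{b2}, \eqref{2.7a}, \eqref{poin} and Young's inequality, absorbing each occurrence of $\|\A\mathscr{V}^k\|_\H^2$ into the left-hand side. The two nonlinear contributions are controlled through $|e^{z(\theta_t\omega_k)}b(\v^k,\mathscr{V}^k,\A\mathscr{V}^k)|\le\frac{\nu}{8}\|\A\mathscr{V}^k\|_\H^2+Ce^{4|z(\theta_t\omega_k)|}\|\v^k\|_\H^2\|\v^k\|_\V^2\|\mathscr{V}^k\|_\V^2$ and $|e^{z(\theta_t\omega_k)}b(\mathscr{V}^k,\v^0,\A\mathscr{V}^k)|\le\frac{\nu}{8}\|\A\mathscr{V}^k\|_\H^2+Ce^{2|z(\theta_t\omega_k)|}\|\v^0\|_\V\|\A\v^0\|_\H\|\mathscr{V}^k\|_\V^2$ (the last step after a Poincar\'e bound on $\|\mathscr{V}^k\|_\H\|\mathscr{V}^k\|_\V$), the diffusion term equals $\sigma z(\theta_t\omega_k)\|\mathscr{V}^k\|_\V^2$, and the terms carrying a factor $|e^{\pm z(\theta_t\omega_k)}-e^{\pm z(\theta_t\omega_0)}|$ or $|z(\theta_t\omega_k)-z(\theta_t\omega_0)|$ are bounded by an extra $\frac{\nu}{2}\|\A\mathscr{V}^k\|_\H^2$ plus a remainder $Q_2(t)$ which is a sum of such squared increments multiplied by, respectively, $\|\f\|_\H^2$, $\|\v^0\|_\H\|\v^0\|_\V^2\|\A\v^0\|_\H$, and $\|\v^0\|_\V^2$. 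This produces $\frac{\d}{\d t}\|\mathscr{V}^k(t)\|_\V^2+\nu\|\A\mathscr{V}^k(t)\|_\H^2\le P_2(t)\|\mathscr{V}^k(t)\|_\V^2+Q_2(t)$ for a.e.\ $t\in[\tau,\tau+T]$, with $P_2(t)=2\sigma z(\theta_t\omega_k)+Ce^{4|z(\theta_t\omega_k)|}\|\v^k\|_\H^2\|\v^k\|_\V^2+Ce^{2|z(\theta_t\omega_k)|}\|\v^0\|_\V\|\A\v^0\|_\H$.

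Because $\mathscr{V}^k(\tau)=0$, Gronwall's inequality yields $\sup_{t\in[\tau,\tau+T]}\|\mathscr{V}^k(t)\|_\V^2\le e^{\int_\tau^{\tau+T}P_2(t)\d t}\int_\tau^{\tau+T}Q_2(t)\d t$, so it only remains to check two integrability facts. That $\int_\tau^{\tau+T}P_2(t)\d t$ is bounded uniformly in $k$ follows from \eqref{conv_z2} (bounding $\sup_k\sup_{t\in[\tau,\tau+T]}|z(\theta_t\omega_k)|$), from $\v^0\in\mathrm{L}^2_{\mathrm{loc}}(\tau,+\infty;\D(\A))$ (so $\|\v^0\|_\V\|\A\v^0\|_\H\in\mathrm{L}^1(\tau,\tau+T)$), and from the conclusions \eqref{LC18}--\eqref{LC19} of Proposition~\ref{LusinC}, which, together with $\v^k=\mathscr{V}^k+\v^0$, give $\sup_k\sup_{t\in[\tau,\tau+T]}\|\v^k(t)\|_\H<+\infty$ and $\sup_k\int_\tau^{\tau+T}\|\v^k(t)\|_\V^2\d t<+\infty$. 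That $\int_\tau^{\tau+T}Q_2(t)\d t\to0$ as $k\to+\infty$ follows from Lemma~\ref{conv_z} (giving $\sup_{t\in[\tau,\tau+T]}[|z(\theta_t\omega_k)-z(\theta_t\omega_0)|+|e^{z(\theta_t\omega_k)}-e^{z(\theta_t\omega_0)}|]\to0$, with the analogue for $e^{-z}$ coming from \eqref{conv_z2} and the local Lipschitz continuity of $\exp$) since $\|\f\|_\H^2$, $\|\v^0\|_\H\|\v^0\|_\V^2\|\A\v^0\|_\H$ and $\|\v^0\|_\V^2$ all belong to $\mathrm{L}^1(\tau,\tau+T)$; dominated convergence then concludes it. Combining the two facts gives $\sup_{t\in[\tau,\tau+T]}\|\mathscr{V}^k(t)\|_\V^2\to0$, which is the claimed Lusin continuity.

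The step I expect to be the main obstacle is the treatment of the Navier--Stokes nonlinearity tested against $\A\mathscr{V}^k$: the estimate \eqref{b2} forces the argument that is differentiated once more (here $\mathscr{V}^k$ or $\v^0$) to sit in $\D(\A)$, which dictates the splitting $\B(\v^k)-\B(\v^0)=\B(\v^k,\mathscr{V}^k)+\B(\mathscr{V}^k,\v^0)$, and it then requires the coefficient $e^{4|z(\theta_t\omega_k)|}\|\v^k\|_\H^2\|\v^k\|_\V^2$ to be $\mathrm{L}^1$-integrable \emph{uniformly in $k$}; the clean way to secure this is to recycle the $\H$-level bounds \eqref{LC18}--\eqref{LC19} already established for $\mathscr{V}^k$. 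Everything else is a routine variant of the argument in Proposition~\ref{LusinC}.
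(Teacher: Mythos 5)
Your proposal is correct and follows essentially the same route as the paper's own proof: testing the difference equation \eqref{LC1} against $\A\mathscr{V}^k$, estimating the trilinear terms via \eqref{b2}, and closing with Gronwall's inequality using the $\H$-level conclusions \eqref{LC18}--\eqref{LC19} together with Lemma \ref{conv_z} to control $\int_\tau^{\tau+T}P_2\,\d t$ uniformly in $k$ and to send $\int_\tau^{\tau+T}Q_2\,\d t\to0$. The only (cosmetic) difference is that you group the nonlinearity as $\B(\v^k,\mathscr{V}^k)+\B(\mathscr{V}^k,\v^0)$ whereas the paper further expands the first piece into $\B(\mathscr{V}^k,\mathscr{V}^k)+\B(\v^0,\mathscr{V}^k)$; the resulting bounds are interchangeable.
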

\begin{proof}
	Taking the inner product with $\A\mathscr{V}^k(\cdot)$ in \eqref{LC1}, and using \eqref{b0} and \eqref{b1}, we get
	\begin{align}\label{LC2-V}
		\frac{1}{2}\frac{\d }{\d t}\|\mathscr{V}^k\|^2_{\V}&=-\nu\|\A\mathscr{V}^k\|^2_{\H}+\sigma z(\theta_t\omega_k)\|\mathscr{V}^k\|^2_{\V}-e^{z(\theta_{t}\omega_k)}b(\v^k,\v^k,\A\mathscr{V}^k)\nonumber\\&\quad+e^{z(\theta_{t}\omega_0)}b(\v^0,\v^0,\A\mathscr{V}^k)+\left[e^{-z(\theta_{t}\omega_k)}-e^{-z(\theta_{t}\omega_0)}\right](\f,\A\mathscr{V}^k)\nonumber\\&\quad+\sigma\left[z(\theta_t\omega_k)-z(\theta_t\omega_0)\right](\v^0,\A\mathscr{V}^k)\nonumber\\&=-\nu\|\A\mathscr{V}^k\|^2_{\H}+\sigma z(\theta_t\omega_k)\|\mathscr{V}^k\|^2_{\V}-e^{z(\theta_{t}\omega_k)}b(\mathscr{V}^k,\mathscr{V}^k,\A\mathscr{V}^k)\nonumber\\&\quad-e^{z(\theta_{t}\omega_k)}b(\v^0,\mathscr{V}^k,\A\mathscr{V}^k)-e^{z(\theta_{t}\omega_k)}b(\mathscr{V}^k,\v^0,\A\mathscr{V}^k)\nonumber\\&\quad-\left[e^{-z(\theta_{t}\omega_k)}-e^{-z(\theta_{t}\omega_0)}\right]b(\v^0,\v^0,\A\mathscr{V}^k)+\left[e^{-z(\theta_{t}\omega_k)}-e^{-z(\theta_{t}\omega_0)}\right](\f,\A\mathscr{V}^k)\nonumber\\&\quad+\sigma\left[z(\theta_t\omega_k)-z(\theta_t\omega_0)\right](\v^0,\A\mathscr{V}^k).
	\end{align}
	Using \eqref{poin}, H\"older's and Young's inequalities, we obtain
	\begin{align}
		\left|\left[e^{-z(\theta_{t}\omega_k)}-e^{-z(\theta_{t}\omega_0)}\right](\f,\A\mathscr{V}^k)\right|&\leq C\left|e^{-z(\theta_{t}\omega_k)}-e^{-z(\theta_{t}\omega_0)}\right|^2\|\f\|^2_{\H}+\frac{\nu}{12}\|\A\mathscr{V}^k\|^2_{\H},\label{LC4-V}\\
		\left|\sigma\left[z(\theta_t\omega_k)-z(\theta_t\omega_0)\right](\v^0,\A\mathscr{V}^k)\right|&\leq C\left|z(\theta_t\omega_k)-z(\theta_t\omega_0)\right|^2\|\v^0\|^2_{\H}+\frac{\nu}{12}\|\A\mathscr{V}^k\|^2_{\H}.\label{LC5-V}
	\end{align}
	Using \eqref{b2}, continuous embedding $\D(\A)\subset\V\subset\H$, H\"older's and Young's inequalities, we estimate
	\begin{align}
		\left|e^{z(\theta_{t}\omega_k)}b(\mathscr{V}^k,\mathscr{V}^k,\A\mathscr{V}^k)\right|&\leq Ce^{z(\theta_{t}\omega_k)}\|\mathscr{V}^k\|^{1/2}_{\H}\|\mathscr{V}^k\|_{\V}\|\A\mathscr{V}^k\|^{3/2}_{\H}\nonumber\\&\leq Ce^{4z(\theta_{t}\omega_k)}\|\mathscr{V}^k\|^2_{\H}\|\mathscr{V}^k\|^4_{\V}+\frac{\nu}{12}\|\A\mathscr{V}^k\|^2_{\H},\label{LC7-V}\\
		\left|e^{z(\theta_{t}\omega_k)}b(\v^0,\mathscr{V}^k,\A\mathscr{V}^k)\right|&\leq Ce^{z(\theta_{t}\omega_k)}\|\v^0\|^{1/2}_{\H}\|\v^0\|^{1/2}_{\V}\|\mathscr{V}^k\|^{1/2}_{\V}\|\A\mathscr{V}^k\|^{3/2}_{\H}\nonumber\\&\leq C e^{4z(\theta_{t}\omega_k)}\|\v^0\|^{4}_{\V}\|\mathscr{V}^k\|^{2}_{\V}+\frac{\nu}{12}\|\A\mathscr{V}^k\|^{2}_{\H},\label{LC8-V}\\
		\left|e^{z(\theta_{t}\omega_k)}b(\mathscr{V}^k,\v^0,\A\mathscr{V}^k)\right|&\leq Ce^{z(\theta_{t}\omega_k)}\|\mathscr{V}^k\|^{1/2}_{\H}\|\mathscr{V}^k\|^{1/2}_{\V}\|\v^0\|^{1/2}_{\V}\|\A\v^0\|^{1/2}_{\H}\|\A\mathscr{V}^k\|_{\H}\nonumber\\&\leq C e^{2z(\theta_{t}\omega_k)}\|\A\v^0\|^{2}_{\H}\|\mathscr{V}^k\|^{2}_{\V}+\frac{\nu}{12}\|\A\mathscr{V}^k\|^{2}_{\H},\label{LC9-V}
	\end{align}
	and
	\begin{align}
		\left|\left[e^{z(\theta_{t}\omega_k)}-e^{z(\theta_{t}\omega_0)}\right]b(\v^0,\v^0,\A\mathscr{V}^k)\right|&\leq C\left|e^{z(\theta_{t}\omega_k)}-e^{z(\theta_{t}\omega_0)}\right|\|\v^0\|_{\V}\|\A\v^0\|_{\H}\|\A\mathscr{V}^k\|_{\H}\nonumber\\&\leq C\left|e^{z(\theta_{t}\omega_k)}-e^{z(\theta_{t}\omega_0)}\right|^2\|\v^0\|^2_{\V}\|\A\v^0\|^2_{\H}+\frac{\nu}{12}\|\A\mathscr{V}^k\|^2_{\H}.\label{LC10-V}
	\end{align}
	Combining \eqref{LC2-V}-\eqref{LC8-V}, we arrive at
	\begin{align}\label{LC11-V}
		\frac{\d }{\d t}\|\mathscr{V}^k(t)\|^2_{\V}\leq
		\widehat{P}_1(t)\|\mathscr{V}^k(t)\|^2_{\V}+\widehat{Q}_1(t),
	\end{align}
	for a.e. $t\in[\tau,\tau+T]$, $T>0$, where
	\begin{align*}
		\widehat{P}_1^k&=2\left|z(\theta_{t}\omega_k)\right|+Ce^{4z(\theta_{t}\omega_k)}\|\mathscr{V}^k\|^2_{\H}\|\mathscr{V}^k\|^2_{\V}+Ce^{4z(\theta_{t}\omega_k)}\|\v^0\|^{4}_{\V}+C e^{2z(\theta_{t}\omega_k)}\|\A\v^0\|^{2}_{\H},\\	\widehat{Q}_1^k&=C\left|e^{-z(\theta_{t}\omega_k)}-e^{-z(\theta_{t}\omega_0)}\right|^2\|\f\|^2_{\H}+C\left|z(\theta_t\omega_k)-z(\theta_t\omega_0)\right|^2\|\v^0\|^2_{\H}\nonumber\\&\quad+C\left|e^{z(\theta_{t}\omega_k)}-e^{z(\theta_{t}\omega_0)}\right|^2\|\v^0\|^2_{\V}\|\A\v^0\|^2_{\H}.
	\end{align*}
	Using \eqref{LC18}, \eqref{LC19} and the fact that  $\v^0\in\mathrm{C}([\tau,+\infty);\V)\cap\mathrm{L}^2_{\mathrm{loc}}(\tau,+\infty;\D(\A))$, we obtain
	\begin{align}\label{LC14-V}
		\lim_{k\to+\infty}\int_{\tau}^{\tau+T}\widehat{P}_1^k(t)\d t\leq C(\tau,T,\omega_0).
	\end{align}
	Now, using  the fact that $\f\in\mathrm{L}^2_{\text{loc}}(\R;\H)$, $\v^0\in\mathrm{C}([\tau,+\infty);\V)\cap\mathrm{L}^2_{\mathrm{loc}}(\tau,+\infty;\D(\A))$ and Lemma \ref{conv_z}, we conclude that
	\begin{align}\label{LC15-V}
		\lim_{k\to+\infty}\int_{\tau}^{\tau+T}\widehat{Q}_1^k(t)\d t=0.
	\end{align}
	Making use of Gronwall's inequality to the estimate \eqref{LC11-V}, we get
	\begin{align}\label{LC16-V}
		\|\mathscr{V}^k(t)\|^2_{\V}\leq e^{\int_{\tau}^{\tau+T}\widehat{P}_1^k(t)\d t}\left[\int_{\tau}^{\tau+T}\widehat{Q}_1^k(t)\d t\right].
	\end{align}
	In view of \eqref{LC14-V}-\eqref{LC16-V}, one can complete the proof.
\end{proof}

Lemma \ref{Soln} ensures us that we can define a mapping $\Phi:\R^+\times\R\times\Omega\times\V\to\V$ and Lusin continuity in Proposition \ref{LusinC-V} provides its $\mathscr{F}$-measurability. Consequently, in view of Lemma \ref{Soln} and Proposition \ref{LusinC-V}, the mapping $\Phi$ defined by \eqref{Phi} is an NRDS on $\V$.

\subsubsection{Backward convergence of NRDS}
In this subsection, we prove that the solution to the system \eqref{CNSE-M} converges to the solution of the corresponding autonomous system \eqref{A-CNSE-M} in $\V$ as $\tau\to-\infty$.
\begin{proposition}\label{Back_conver-V}
	Suppose that Hypothesis \ref{Hypo_f-N} is satisfied. Then, the solution $\v$ of the system \eqref{CNSE-M} backward converges to the solution $\widetilde{\v}$ of the system \eqref{A-CNSE-M} in $\V$, that is,
	\begin{align*}
		\lim_{\tau\to -\infty}\|\v(T+\tau,\tau,\theta_{-\tau}\omega,\v_{\tau})-\widetilde{\v}(t,\omega,\widetilde{\v}_0)\|_{\V}=0, \ \ \text{ for all } T>0 \text{ and } \omega\in\Omega,
	\end{align*}
	whenever $\|\v_{\tau}-\widetilde{\v}_0\|_{\V}\to0$ as $\tau\to-\infty.$
\end{proposition}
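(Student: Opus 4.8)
Here is the plan. The idea is to reproduce the argument of Proposition~\ref{Back_conver}, but at the level of the regular norm, by testing the difference equation against $\A\mathscr{V}^{\tau}$ rather than $\mathscr{V}^{\tau}$. Set $\mathscr{V}^{\tau}(t):=\v(t+\tau,\tau,\theta_{-\tau}\omega,\v_{\tau})-\widetilde{\v}(t,\omega,\widetilde{\v}_{0})$ for $t\geq0$; then $\mathscr{V}^{\tau}$ satisfies \eqref{BC1}. Taking the inner product of \eqref{BC1} with $\A\mathscr{V}^{\tau}$ in $\H$ and using the decomposition $\B(\v)-\B(\widetilde{\v})=\B(\v,\mathscr{V}^{\tau})+\B(\mathscr{V}^{\tau},\widetilde{\v})$, I would estimate the two trilinear terms exactly as in the inequalities \eqref{LC7-V}--\eqref{LC9-V} of Proposition~\ref{LusinC-V}, via \eqref{b2}, the Poincar\'e inequality \eqref{poin} and Young's inequality, while the forcing term is handled by
\begin{align*}
	\big|e^{-z(\theta_{t}\omega)}(\f(t+\tau)-\f_{\infty},\A\mathscr{V}^{\tau})\big|\leq Ce^{-2z(\theta_{t}\omega)}\|\f(t+\tau)-\f_{\infty}\|^{2}_{\H}+\frac{\nu}{8}\|\A\mathscr{V}^{\tau}\|^{2}_{\H}.
\end{align*}
Absorbing all $\|\A\mathscr{V}^{\tau}\|^{2}_{\H}$ contributions into the term $-\nu\|\A\mathscr{V}^{\tau}\|^{2}_{\H}$ coming from $-\nu\A\mathscr{V}^{\tau}$, this yields a differential inequality of the form
\begin{align*}
	\frac{\d}{\d t}\|\mathscr{V}^{\tau}(t)\|^{2}_{\V}+\frac{\nu}{2}\|\A\mathscr{V}^{\tau}(t)\|^{2}_{\H}\leq \widehat{S}_{1}(t)\|\mathscr{V}^{\tau}(t)\|^{2}_{\V}+Ce^{-2z(\theta_{t}\omega)}\|\f(t+\tau)-\f_{\infty}\|^{2}_{\H},
\end{align*}
with $\widehat{S}_{1}(t)=C\big[\sigma|z(\theta_{t}\omega)|+e^{4|z(\theta_{t}\omega)|}\|\v(t+\tau,\tau,\theta_{-\tau}\omega,\v_{\tau})\|^{2}_{\H}\|\v(t+\tau,\tau,\theta_{-\tau}\omega,\v_{\tau})\|^{2}_{\V}+e^{2|z(\theta_{t}\omega)|}\|\widetilde{\v}(t)\|_{\V}\|\A\widetilde{\v}(t)\|_{\H}\big]$.

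Next I would apply Gronwall's inequality over $(0,T)$ to obtain
\begin{align*}
	\|\mathscr{V}^{\tau}(T)\|^{2}_{\V}\leq\bigg[\|\v_{\tau}-\widetilde{\v}_{0}\|^{2}_{\V}+C\int_{0}^{T}e^{-2z(\theta_{t}\omega)}\|\f(t+\tau)-\f_{\infty}\|^{2}_{\H}\,\d t\bigg]\exp\bigg(\int_{0}^{T}\widehat{S}_{1}(t)\,\d t\bigg).
\end{align*}
Since $z(\theta_{\cdot}\omega)$ is continuous and $\widetilde{\v}\in\mathrm{C}([0,T];\V)\cap\mathrm{L}^{2}(0,T;\D(\A))$, the quantity $\int_{0}^{T}\widehat{S}_{1}(t)\,\d t$ is finite as soon as $\v(\cdot+\tau,\tau,\theta_{-\tau}\omega,\v_{\tau})$ is bounded in $\mathrm{C}([0,T];\V)$ uniformly in $\tau$; this holds because the hypothesis $\|\v_{\tau}-\widetilde{\v}_{0}\|_{\V}\to0$ forces $\{\v_{\tau}\}$ to be bounded in $\V$, Hypothesis~\ref{Hypo_f-N} gives $\sup_{\tau\leq\tau_{0}}\int_{\tau}^{\tau+T}\|\f(t)\|^{2}_{\H}\,\d t<+\infty$, and the noise coefficients appearing in the equation for $\v(\cdot+\tau,\tau,\theta_{-\tau}\omega,\cdot)$ are $z(\theta_{t}\omega)$, $t\in[0,T]$, independently of $\tau$; hence the $\H$- and $\V$-energy estimates \eqref{EI1} and \eqref{BddV2'} together with the uniform Gronwall lemma (as in the proofs of Lemmas~\ref{Absorbing} and \ref{AbsorbingV}) furnish a uniform-in-$\tau$ bound for $\v$ in $\mathrm{C}([0,T];\V)\cap\mathrm{L}^{2}(0,T;\D(\A))$. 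Finally the bracket tends to $0$ as $\tau\to-\infty$: the first summand by assumption, and the second by Hypothesis~\ref{Hypo_f-N} since $\int_{0}^{T}e^{-2z(\theta_{t}\omega)}\|\f(t+\tau)-\f_{\infty}\|^{2}_{\H}\,\d t\leq C(\omega,T)\int_{-\infty}^{\tau+T}\|\f(t)-\f_{\infty}\|^{2}_{\H}\,\d t\to0$. As $T>0$ is arbitrary, this gives the claimed backward convergence in $\V$; integrating the differential inequality once more over $(0,T)$ additionally yields $\int_{0}^{T}\|\A\mathscr{V}^{\tau}(t)\|^{2}_{\H}\,\d t\to0$ as $\tau\to-\infty$.

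I expect the main obstacle to be precisely the uniform-in-$\tau$ bound for $\v(\cdot+\tau,\tau,\theta_{-\tau}\omega,\v_{\tau})$ in $\mathrm{C}([0,T];\V)\cap\mathrm{L}^{2}(0,T;\D(\A))$: the $\V$-energy inequality \eqref{BddV2'} carries the super-linear nonlinear contribution $Ce^{4|z(\theta_{t}\omega)|}\|\v\|^{2}_{\H}\|\v\|^{4}_{\V}$, which prevents a direct Gronwall argument starting from $t=0$. One must therefore first obtain uniform $\mathrm{L}^{\infty}(0,T;\H)\cap\mathrm{L}^{2}(0,T;\V)$ control from \eqref{EI1}, which makes the coefficient $Ce^{4|z(\theta_{t}\omega)|}\|\v(t)\|^{2}_{\H}$ integrable on $[0,T]$ uniformly in $\tau$, and then invoke the uniform Gronwall lemma on subintervals to control $\|\v\|_{\V}$, exactly as in the proof of Lemma~\ref{AbsorbingV}, taking care throughout that every constant depends only on $\sup_{\tau\leq\tau_{0}}\|\v_{\tau}\|_{\V}$, on $T$, on $\omega$, and on $\f$ through Hypothesis~\ref{Hypo_f-N}. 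Everything else in the argument is the $\D(\A)$-analogue of computations already carried out for $\H$ in Propositions~\ref{Back_conver} and \ref{LusinC-V}.
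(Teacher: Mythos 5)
Your proposal is correct and shares the paper's overall strategy (test \eqref{BC1} against $\A\mathscr{V}^{\tau}$, estimate the trilinear terms via \eqref{b2} and Young's inequality, then apply Gronwall over $(0,T)$), but it diverges in one substantive way: the splitting of the nonlinearity. You write $\B(\v)-\B(\widetilde{\v})=\B(\v,\mathscr{V}^{\tau})+\B(\mathscr{V}^{\tau},\widetilde{\v})$, so your Gronwall coefficient carries $e^{4|z(\theta_t\omega)|}\|\v\|^2_{\H}\|\v\|^2_{\V}$ and you must supply a uniform-in-$\tau$ a priori bound on the non-autonomous solution $\v(\cdot+\tau,\tau,\theta_{-\tau}\omega,\v_{\tau})$. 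The paper instead uses the three-term splitting $\B(\mathscr{V}^{\tau},\mathscr{V}^{\tau})+\B(\mathscr{V}^{\tau},\widetilde{\v})+\B(\widetilde{\v},\mathscr{V}^{\tau})$, so the problematic coefficient becomes $e^{4z(\theta_t\omega)}\|\mathscr{V}^{\tau}\|^2_{\H}\|\mathscr{V}^{\tau}\|^2_{\V}$, which is integrable on $[0,T]$ uniformly as $\tau\to-\infty$ simply by citing the already-established $\H$-level convergences \eqref{BC} and \eqref{BC10} from Proposition~\ref{Back_conver}; no fresh energy estimate is needed, and the remaining coefficients involve only the fixed autonomous solution $\widetilde{\v}\in\mathrm{C}([0,T];\V)\cap\mathrm{L}^2(0,T;\D(\A))$. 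Your route is viable, but note that the ``main obstacle'' you flag is partly self-inflicted and partly overstated: for your coefficient you only need $\v$ bounded in $\mathrm{L}^{\infty}(0,T;\H)\cap\mathrm{L}^2(0,T;\V)$ uniformly for $\tau\leq\tau_0$ (since $\int_0^T\|\v\|^2_{\H}\|\v\|^2_{\V}\,\d t\leq\sup_{[0,T]}\|\v\|^2_{\H}\int_0^T\|\v\|^2_{\V}\,\d t$), which follows directly from \eqref{EI1}, the boundedness of $\{\v_\tau\}$ in $\V$, and the uniform bound on $\int_{\tau}^{\tau+T}\|\f(s)\|^2_{\H}\,\d s$ coming from Hypothesis~\ref{Hypo_f-N}; the delicate uniform $\mathrm{C}([0,T];\V)\cap\mathrm{L}^2(0,T;\D(\A))$ control via the uniform Gronwall lemma is not required. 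Adopting the paper's decomposition removes even this step.
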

\begin{proof}
 Taking the inner product with $\A\mathscr{V}^{\tau}(\cdot)$ in \eqref{BC1}, we get
	\begin{align}\label{BC2-V}
		&\frac{1}{2}\frac{\d }{\d t}\|\mathscr{V}^{\tau}\|^2_{\V}\nonumber\\&=-\nu\|\A\mathscr{V}^{\tau}\|^2_{\H}+\sigma z(\theta_t\omega)\|\mathscr{V}^{\tau}\|^2_{\V}-e^{z(\theta_{t}\omega)}b(\mathscr{V}^{\tau},\mathscr{V}^{\tau},\A\mathscr{V}^{\tau})-e^{z(\theta_{t}\omega)}b(\mathscr{V}^{\tau},\widetilde{\v},\A\mathscr{V}^{\tau})\nonumber\\&\quad-e^{z(\theta_{t}\omega)}b(\widetilde{\v},\mathscr{V}^{\tau},\A\mathscr{V}^{\tau})+e^{-z(\theta_{t}\omega)}(\f(t+\tau)-\f_{\infty},\A\mathscr{V}^{\tau}).
	\end{align}
	Using \eqref{b2}, H\"older's and Young's inequalities, we deduce that
		\begin{align}
		\left|e^{z(\theta_{t}\omega)}b(\mathscr{V}^{\tau},\mathscr{V}^{\tau},\A\mathscr{V}^{\tau})\right|&\leq e^{z(\theta_{t}\omega)}\|\mathscr{V}^{\tau}\|^{1/2}_{\H}\|\mathscr{V}^{\tau}\|_{\V}\|\A\mathscr{V}^{\tau}\|^{3/2}_{\H}\nonumber\\&\leq Ce^{4z(\theta_{t}\omega)}\|\mathscr{V}^{\tau}\|^2_{\H}\|\mathscr{V}^{\tau}\|^4_{\V}+\frac{\nu}{8}\|\A\mathscr{V}^{\tau}\|^2_{\V},\label{BC4-V}\\
		\left|e^{z(\theta_{t}\omega)}b(\widetilde{\v},\mathscr{V}^{\tau},\A\mathscr{V}^{\tau})\right|&\leq Ce^{z(\theta_{t}\omega)}\|\widetilde{\v}\|^{1/2}_{\H}\|\widetilde{\v}\|^{1/2}_{\V}\|\mathscr{V}^{\tau}\|^{1/2}_{\V}\|\A\mathscr{V}^{\tau}\|^{3/2}_{\H}\nonumber\\&\leq C e^{4z(\theta_{t}\omega)}\|\widetilde{\v}\|^{4}_{\V}\|\mathscr{V}^{\tau}\|^{2}_{\V}+\frac{\nu}{8}\|\A\mathscr{V}^{\tau}\|^{2}_{\H},\label{BC5-V}\\
		\left|e^{z(\theta_{t}\omega)}b(\mathscr{V}^{\tau},\widetilde{\v},\A\mathscr{V}^{\tau})\right|&\leq Ce^{z(\theta_{t}\omega)}\|\mathscr{V}^{\tau}\|^{1/2}_{\H}\|\mathscr{V}^{\tau}\|^{1/2}_{\V}\|\widetilde{\v}\|^{1/2}_{\V}\|\A\widetilde{\v}\|^{1/2}_{\H}\|\A\mathscr{V}^{\tau}\|_{\H}\nonumber\\&\leq C e^{2z(\theta_{t}\omega)}\|\A\widetilde{\v}\|^{2}_{\H}\|\mathscr{V}^{\tau}\|^{2}_{\V}+\frac{\nu}{8}\|\A\mathscr{V}^{\tau}\|^{2}_{\H},\label{BC6-V}
	\end{align}
and
	\begin{align}\label{BC7-V}
	\hspace{-23mm}	\left|e^{-z(\theta_{t}\omega)}(\f(t+\tau)-\f_{\infty},\A\mathscr{V}^{\tau})\right|\leq C e^{-2z(\theta_{t}\omega)}\|\f(t+\tau)-\f_{\infty}\|^2_{\H}+\frac{\nu}{8}\|\A\mathscr{V}^{\tau}\|^2_{\H}.
	\end{align}
	Combining \eqref{BC2-V}-\eqref{BC7-V}, we arrive at
	\begin{align}\label{BC6V}
		\frac{\d }{\d t}\|\mathscr{V}^{\tau}\|^2_{\H}\leq C\big[\widehat{S}_1(t)\|\mathscr{V}^{\tau}\|^2_{\H}+e^{-2z(\theta_{t}\omega)}\|\f(t+\tau)-\f_{\infty}\|^2_{\H}\big],
	\end{align}
	where
	\begin{align*}
		\widehat{S}_1(t)=e^{4z(\theta_{t}\omega)}\|\mathscr{V}^{\tau}\|^2_{\H}\|\mathscr{V}^{\tau}\|^2_{\V}+e^{4z(\theta_{t}\omega)}\|\widetilde{\v}\|^4_{\V}+e^{2z(\theta_{t}\omega)}\|\A\widetilde{\v}\|^{2}_{\H}+2\left|z(\theta_{t}\omega)\right|.
	\end{align*}
	Making use of Gronwall's inequality in \eqref{BC6V} over $(0,T)$, we obtain
	\begin{align}\label{BC8-V}
		\|\mathscr{V}^{\tau}(T)\|^2_{\V}\leq \left[\|\mathscr{V}^{\tau}(0)\|^2_{\V}+C\int_{0}^{T}e^{-2z(\theta_{t}\omega)}\|\f(t+\tau)-\f_{\infty}\|^2_{\H} \d t\right]e^{C\int_{0}^{T}\widehat{S}_1(t)\d t}.
	\end{align}
	It implies from continuity of $z$, $\widetilde{\v}\in\mathrm{C}([0,T];\V)\cap\mathrm{L}^2(0,T;\D(\A))$, \eqref{BC} and \eqref{BC10} that
	\begin{align}\label{BC10-V}
		\int_{0}^{T}\widehat{S}_{1}(t)\d t<+\infty\  \text{ as } \ \tau\to-\infty.
	\end{align} From Hypothesis \ref{Hypo_f-N}, we deduce that
	\begin{align}\label{BC9-V}
		&\int_{0}^{T}e^{-2z(\theta_{t}\omega)}\|\f(t+\tau)-\f_{\infty}\|^2_{\H} \d t\nonumber\\&\leq\sup_{t\in[0,T]}\left[e^{-2z(\theta_{t}\omega)}\right] \int_{-\infty}^{\tau+T}\|\f(t)-\f_{\infty}\|^2_{\H} \d t\to 0 \ \text{ as } \ \tau\to -\infty.
	\end{align}
	Using \eqref{BC10-V}-\eqref{BC9-V} and $\|\mathscr{V}^{\tau}(0)\|^2_{\V}=\|\v_{\tau}-\widetilde{\v}_0\|_{\V}\to0$ as $\tau\to-\infty$ in \eqref{BC8-V}, one can conclude the proof.
\end{proof}
\subsubsection{Increasing random absorbing set}
This subsection provides the existence of increasing $\mathfrak{D}$-random absorbing set in $\V$ for non-autonomous SNSE \eqref{SNSE} with $S(\u)=\u$.
\begin{proposition}\label{IRAS-V}
	Suppose that $\f\in\mathrm{L}^2_{\mathrm{loc}}(\R;\H)$. Then, there is an increasing random absorbing set $\mathcal{K}_{\V}\subset\V$ given by
	\begin{align}\label{IRAS1-V}
		\mathcal{K}_{\V}(\tau,\omega):=\left\{\u\in\V:\|\u\|^2_{\V}\leq e^{z(\omega)}\widehat{K}(\tau,\omega)\right\}, \ \text{ for all } \ \tau\in\R,
	\end{align}
	where $\widehat{K}(\tau,\omega)$ is defined by \eqref{AB1-V}. Moreover, $\mathcal{K}_{\V}$ is backward tempered, that is, $\mathcal{K}_{\V}\in\mathfrak{D}$.
\end{proposition}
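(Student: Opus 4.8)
The plan is to follow the proof of Proposition~\ref{IRAS} line by line, replacing the $\H$-estimate there by the $\V$-estimate \eqref{AB1-V} of Lemma~\ref{AbsorbingV} and using the Poincar\'e inequality \eqref{poin} to transfer the resulting bound back to the space $\H$ in which the universe $\mathfrak{D}$ is measured. For the absorption, Lemma~\ref{AbsorbingV} gives, for each $(\tau,\omega,D)\in\R\times\Omega\times\mathfrak{D}$, a time $\mathcal{T}=\mathcal{T}(\tau,\omega,D)\geq 2$ with $\|\v(\tau,\tau-t,\theta_{-\tau}\omega,\v_0)\|_\V^2\leq\widehat{K}(\tau,\omega)$ for all $t\geq\mathcal{T}$ and $\v_0\in D(\tau-t,\theta_{-t}\omega)$; since $\Phi(t,\tau-t,\theta_{-t}\omega,\cdot)=\u(\tau,\tau-t,\theta_{-\tau}\omega,\cdot)=e^{z(\omega)}\v(\tau,\tau-t,\theta_{-\tau}\omega,\cdot)$ by \eqref{Phi}, this yields the inclusion $\Phi(t,\tau-t,\theta_{-t}\omega,D(\tau-t,\theta_{-t}\omega))\subseteq\mathcal{K}_\V(\tau,\omega)$ for all $t\geq\mathcal{T}$.

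Next I would record that $\widehat{K}(\tau,\omega)$ is finite and that $\tau\mapsto\widehat{K}(\tau,\omega)$ is non-decreasing. Finiteness follows because $K(\tau,\omega)<+\infty$ by \eqref{Absorbing-R}, the suprema of $z$ and of $e^{c|z|}$ over the compact interval $[-2,0]$ are finite by continuity of $z$, and $\sup_{s\leq\tau}\int_{-2}^{0}\|\f(\zeta+s)\|_\H^2\d\zeta<+\infty$ by the backward temperedness \eqref{f2-N} of $\f$ (as in the last display in the proof of Lemma~\ref{AbsorbingV}); hence $\widetilde{K},K_1,\widehat{K}<+\infty$. Monotonicity holds because $K$, $\widetilde K$, $K_1$, $\widehat K$ are built from suprema over $s\leq\tau$ by order-preserving operations, so each is non-decreasing in $\tau$; consequently $\tau\mapsto\mathcal{K}_\V(\tau,\omega)$ is an increasing family, $\bigcup_{s\leq\tau}\mathcal{K}_\V(s,\omega)=\mathcal{K}_\V(\tau,\omega)$, and $\sup_{s\leq\tau}\|\mathcal{K}_\V(s-t,\theta_{-t}\omega)\|_\H^2=\|\mathcal{K}_\V(\tau-t,\theta_{-t}\omega)\|_\H^2$. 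By \eqref{poin} the latter is bounded by $\lambda_1^{-1}e^{z(\theta_{-t}\omega)}\widehat{K}(\tau-t,\theta_{-t}\omega)$, so $\mathcal{K}_\V\in\mathfrak{D}$ will follow once we show $e^{-\frac{\nu\lambda_1}{3}t}e^{z(\theta_{-t}\omega)}\widehat{K}(\tau-t,\theta_{-t}\omega)\to 0$ as $t\to+\infty$.

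This last limit is the heart of the matter. Shifting $\omega\mapsto\theta_{-t}\omega$ replaces each $z(\theta_\zeta\omega)$ occurring in $K,\widetilde K,K_1$ by $z(\theta_{\zeta-t}\omega)$, while all the forcing integrals stay bounded uniformly in $t$ by the corresponding quantities at $\tau$ (because $(-\infty,\tau-t]\subseteq(-\infty,\tau]$), using \eqref{Absorbing-R} and \eqref{f2-N}. The obstacle is the factor $e^{K_1}$ inside $\widehat K$: $K_1$ is of order $\widetilde K\,K\,e^{4|z(\theta_{-t}\omega)|}$, each factor carrying an $e^{c|z(\theta_{-t}\omega)|}$, so the bound $|z(\theta_{-t}\omega)|=o(t)$ from \eqref{Z3} only forces $K_1(\tau-t,\theta_{-t}\omega)$ to grow slower than every exponential --- not slowly enough to keep $e^{K_1(\tau-t,\theta_{-t}\omega)}$ from overwhelming $e^{-\frac{\nu\lambda_1}{3}t}$. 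To get around this I would first upgrade the almost sure control of the Ornstein--Uhlenbeck process to $|z(\theta_{-t}\omega)|=\mathrm{O}(\sqrt{\log t})$, by applying Chebyshev's inequality to the Gaussian moment estimate \eqref{Z2} on the unit windows $[-n-1,-n]$, $n\in\N$, then Borel--Cantelli, then interpolating by continuity of $z$; with this sharper estimate $K,\widetilde K,K_1$, and hence $K_1(\tau-t,\theta_{-t}\omega)$, grow at most sub-polynomially in $t$, so $e^{z(\theta_{-t}\omega)}\widehat{K}(\tau-t,\theta_{-t}\omega)$ grows slower than $e^{\delta t}$ for every $\delta>0$ and the limit holds, giving $\mathcal{K}_\V\in\mathfrak{D}$. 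Finally, the $\mathscr{F}$-measurability of $\mathcal{K}_\V$ --- subtle because its radius is a supremum over the uncountable set $(-\infty,\tau]$ --- is obtained by the Egoroff and Lusin theorems exactly as in Proposition~\ref{IRAS} (following Proposition~3.1 of \cite{YR}), and I would simply invoke that argument.
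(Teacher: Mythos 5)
Your skeleton for the absorption, finiteness, monotonicity and measurability parts coincides with the paper's own (very short) proof: absorption from Lemma \ref{AbsorbingV} via \eqref{Phi}, finiteness of $\widehat{K}$ from \eqref{Absorbing-R}, monotonicity of $\tau\mapsto\widehat{K}(\tau,\omega)$ because everything is built from suprema over $s\le\tau$, and Egoroff--Lusin for measurability. You are also right that the paper disposes of the backward temperedness of $\mathcal{K}_{\V}$ in a single sentence, and that the factor $e^{K_1}$ in $\widehat{K}$ is where the real difficulty sits: one needs $K_1(\tau-t,\theta_{-t}\omega)=o(t)$, and the estimates \eqref{Z3}--\eqref{Z5} alone only give $K(\tau-t,\theta_{-t}\omega)=O(e^{\delta t})$ for every $\delta>0$, which is useless once it is exponentiated. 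Identifying this is genuinely valuable.

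However, your proposed repair does not close the gap. The claim that a pointwise bound $|z(\theta_{-t}\omega)|=\mathrm{O}(\sqrt{\log t})$ makes $K(\tau-t,\theta_{-t}\omega)$, and hence $\widetilde K$ and $K_1$, sub-polynomial in $t$ is not justified, because the exponent in \eqref{AB2} contains the term
\begin{align*}
2\sigma\int_{\uprho}^{0}z(\theta_{\upeta-t}\omega)\,\d\upeta \;=\; 2\sigma\int_{\uprho-t}^{-t}z(\theta_{\upeta}\omega)\,\d\upeta ,
\end{align*}
an integral of $z$ over a window of length $|\uprho|$ located near $-t$, not a pointwise value of $z$. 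If you bound it by $|\uprho|\sup_{[\uprho-t,-t]}|z|=\mathrm{O}\big(|\uprho|\sqrt{\log(t+|\uprho|)}\big)$, then as soon as $2\sigma C\sqrt{\log t}>\nu\lambda_1$ the damping $e^{\nu\lambda_1\uprho}$ is overwhelmed and the $\uprho$-integral defining $K(\tau-t,\theta_{-t}\omega)$ is no longer controlled; if instead you fall back on the ergodic average \eqref{Z4} you reintroduce an $\varepsilon t$ term and recover only the useless $\mathrm{O}(e^{\delta t})$ bound. The quantity $\int_{-T}^{0}z(\theta_{\upeta}\omega)\,\d\upeta$ behaves like a Brownian increment, of order $\sqrt{T}$ up to iterated logarithms, so what is actually required is a maximal estimate of the form $\sup_{\uprho\le 0}\big(\tfrac{\nu\lambda_1}{2}\uprho+2\sigma\big|\int_{\uprho-t}^{-t}z\,\d\upeta\big|\big)=o(\log t)$ uniformly in $t$ --- an assertion about running maxima of the integrated Ornstein--Uhlenbeck process that does not follow from, and is strictly stronger than, the $\sqrt{\log t}$ bound on $z$ itself (and, even granted a logarithmic bound, would only yield polynomial growth of $K$ and hence of $K_1$, which still does not tame $e^{K_1}$ against $e^{-\nu\lambda_1 t/3}$ without further control of the constants). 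So the heart of your argument --- precisely the step you correctly single out as the heart of the matter --- remains open; as written, neither your proof nor the paper's one-line assertion establishes $\mathcal{K}_{\V}\in\mathfrak{D}$.
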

\begin{proof}
	From \eqref{Absorbing-R}, we have $K(\tau,\omega)<+\infty$. It implies from the definition of $\widehat{K}(\tau,\omega)$ that $\widehat{K}(\tau,\omega)<+\infty$. Also, $\mathcal{K}_{\V}$ is tempered. Further, $\mathcal{K}_{\V}(\tau,\omega)$ is increasing due to the fact that $\tau\mapsto K(\tau,\omega)$ is an increasing function. Then, $\mathcal{K}_{\V}$ is an increasing tempered set which gives the backward temperedness of $\mathcal{K}_{\V}(\tau,\omega)$, that is, $\mathcal{K}_{\V}\in\mathfrak{D}$. We have from Lemma \ref{AbsorbingV} that for each $\tau\in\R$, $\omega\in\Omega$ and $D\in\mathfrak{D}$, there exists $\mathcal{T}=\mathcal{T}(\tau,\omega,D)\geq2$ such that for all $t\geq\mathcal{T}$,
	\begin{align*}
		\Phi(t,\tau-t,\theta_{-t}\omega,D(\tau-t,\theta_{-t}\omega))=\u(\tau,\tau-t,\theta_{-\tau}\omega,D(\tau-t,\theta_{-t}\omega))\subseteq\mathcal{K}_{\V}(\tau,\omega),
	\end{align*}
	and  the absorption follows. For the measurability, see Lemma \ref{IRAS}.
\end{proof}
\subsubsection{Backward flattening estimate}
 In order to prove the existence of backward compact random attractors in $\V$, we prove that the cocycle $\Phi$ corresponding to the 2D SNSE \eqref{SNSE} with $S(\u)=\u$ satisfies the backward flattening property in $\V$.

Let $\{e_j\}_{j=1}^{+\infty}\subset\D(\A)$ is the family of eigenfunctions for $\A$ with corresponding positive eigenvalues $\lambda_1\leq\lambda_2\leq\cdots\leq\lambda_j\to+\infty$ as $j\to+\infty$, which form an orthonormal basis of $\H$. Then, $\v\in\H$ has the orthogonal decomposition:
\begin{align*}
	\v=\P_{i}\v\oplus(\I-\P_{i})\v=:\v_{1}+\v_{2},  \ \ \text{ for eaah } i\in\N,
\end{align*}
where, $\P_i:\H\to\H_{i}:=\mathrm{span}\{e_1,e_2,\cdots,e_i\}\subset\H$ is a canonical projection. 	Remember that for $\psi\in\D(\A)$, we have  \begin{align*}\mathrm{P}_i\psi&=\sum_{j=1}^{i}(\psi,e_j)e_j,\  \A\mathrm{P}_i\psi=\sum_{j=1}^{i}\lambda_j(\psi,e_j)e_j,\\ (\mathrm{I}-\mathrm{P}_i)\psi:=\mathrm{Q}_i\psi&=\sum_{j=i+1}^{+\infty}(\psi,e_j)e_j,\ \A\mathrm{Q}_i\psi=\sum_{j=i+1}^{+\infty}\lambda_j(\psi,e_j)e_j, \\
	\|\A\mathrm{Q}_i\psi\|_{\H}^2&=\sum_{j=i+1}^{+\infty}\lambda_j^2|(\psi,e_j)|^2\geq \lambda_{i+1}\sum_{j=m+1}^{+\infty}\lambda_j|(\psi,e_j)|^2=\lambda_{i+1}\|\Q_i\psi\|_{\V}^2,
\end{align*}
and
\begin{align*}
	\|\A\mathrm{P}_i\psi\|_{\H}^2=\sum_{j=1}^{i}\lambda_j^2|(\psi,e_j)|^2\leq \lambda_{i}\sum_{j=1}^{i}\lambda_j|(\psi,e_j)|^2=\lambda_{i}\|\P_i\psi\|_{\V}^2.
\end{align*}
That is, we get \begin{align}\label{4.9}\|\A\Q_i\psi\|_{\H}\geq \sqrt{\lambda_{i+1}}\|\Q_i\psi\|_{\V}\ \text{ and }\ \|\A\P_i\psi\|_{\H}\leq \sqrt{\lambda_i}\|\P_i\psi\|_{\V}.\end{align}
\begin{lemma}\label{Flattening}
	Suppose that $\f\in\mathrm{L}^2_{\mathrm{loc}}(\R;\H)$ and $(\tau,\omega,D)\in\R\times\Omega\times\mathfrak{D}$. Then
	\begin{align}\label{FL-Property}
		\lim_{i,t\to+\infty}\sup_{s\leq \tau}\sup_{\v_{0}\in D(s-t,\theta_{-t}\omega)}\|(\I-\P_{i})\v(s,s-t,\theta_{-s},\v_{\tau,2})\|^2_{\H}=0,
	\end{align}
	where $\v_{\tau,2}=(\I-\P_{i})\v_{\tau}$.
\end{lemma}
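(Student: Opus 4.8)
The plan is to test the equation for the high‑frequency part $\v_2:=(\I-\P_i)\v$ against $\v_2$ on the fixed window $[s-1,s]$ and to close the resulting estimate by a uniform Gronwall argument in which the spectral gap $\lambda_{i+1}\to+\infty$ supplies the decay. Write $\v(\cdot)=\v(\cdot,s-t,\theta_{-s}\omega,\v_{\tau,2})$ with $\v_{\tau,2}=(\I-\P_i)\v_0$, $\v_0\in D(s-t,\theta_{-t}\omega)$, and $\v=\v_1+\v_2$, $\v_1=\P_i\v$. Since $\|\v_{\tau,2}\|_{\H}^2\le\|\v_0\|_{\H}^2\le\|D(s-t,\theta_{-t}\omega)\|_{\H}^2$ and $D\in\mathfrak{D}$, inequality \eqref{v_0} holds verbatim with $\v_{\tau,2}$ in place of $\v_0$, so the conclusions \eqref{AB11}, \eqref{AB111} of Lemma \ref{Absorbing} and \eqref{AB1-V} of Lemma \ref{AbsorbingV} are valid for this trajectory with the \emph{same} (and $i$-independent) time $\mathcal{T}=\mathcal{T}(\tau,\omega,D)\ge2$, uniformly in $s\le\tau$ and $\v_0$. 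I would therefore restrict from the start to $t\ge\mathcal{T}$ and to $\zeta\in[s-1,s]\subset[s-t,s]$, recording $\|\v(\zeta)\|_{\H}^2\le C_1(\tau,\omega):=e^{2\nu\lambda_1}[1+\tfrac{2}{\nu\lambda_1}K(\tau,\omega)]$, $\sup_{\zeta\in[s-1,s]}\|\v(\zeta)\|_{\V}^2\le\widehat K(\tau,\omega)$ and $\int_{s-1}^s\|\v(\zeta)\|_{\V}^2\,\d\zeta\le\widetilde K(\tau,\omega)$.

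Testing the first equation of \eqref{CNSE-M} (with $\omega$ replaced by $\theta_{-s}\omega$) against $\v_2=\Q_i\v$, and using that $\Q_i:=\I-\P_i$ is the orthogonal projection in $\H$ onto the high modes and commutes with $\A$, gives, with $z_\zeta:=z(\theta_{\zeta-s}\omega)$, the energy identity $\tfrac12\tfrac{\d}{\d\zeta}\|\v_2(\zeta)\|_{\H}^2+\nu\|\v_2(\zeta)\|_{\V}^2=-e^{z_\zeta}b(\v,\v,\v_2)+e^{-z_\zeta}(\f(\zeta),\v_2)+\sigma z_\zeta\|\v_2(\zeta)\|_{\H}^2$. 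The decisive ingredient is the spectral gap $\|\v_2\|_{\H}\le\lambda_{i+1}^{-1/2}\|\v_2\|_{\V}$ (obtained from the eigen-decomposition exactly as in the computations preceding \eqref{4.9}). Applying \eqref{b1} with $(\u,\v,\w)=(\v,\v,\v_2)$, then this gap, then Young's inequality and finally the absorbing bounds above,
\begin{align*}
\big|e^{z_\zeta}b(\v,\v,\v_2)\big|&\le Ce^{z_\zeta}\|\v\|_{\H}^{1/2}\|\v\|_{\V}^{3/2}\|\v_2\|_{\H}^{1/2}\|\v_2\|_{\V}^{1/2}\le Ce^{z_\zeta}\lambda_{i+1}^{-1/4}\|\v\|_{\H}^{1/2}\|\v\|_{\V}^{3/2}\|\v_2\|_{\V}\\
&\le\frac\nu4\|\v_2\|_{\V}^2+C\lambda_{i+1}^{-1/2}e^{2|z_\zeta|}\|\v\|_{\H}\|\v\|_{\V}^3\le\frac\nu4\|\v_2\|_{\V}^2+C\lambda_{i+1}^{-1/2}e^{2|z_\zeta|}C_1^{1/2}\widehat K^{1/2}\|\v\|_{\V}^2 ,
\end{align*}
and $|e^{-z_\zeta}(\f(\zeta),\v_2)|\le e^{-z_\zeta}\|\f(\zeta)\|_{\H}\lambda_{i+1}^{-1/2}\|\v_2\|_{\V}\le\frac\nu4\|\v_2\|_{\V}^2+C\lambda_{i+1}^{-1}e^{2|z_\zeta|}\|\f(\zeta)\|_{\H}^2$. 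Substituting and discarding the leftover $\tfrac\nu2\|\v_2\|_{\V}^2\ge0$, I obtain on $[s-1,s]$, for $i$ so large that $\lambda_{i+1}\ge1$,
\begin{align*}
\frac{\d}{\d\zeta}\|\v_2(\zeta)\|_{\H}^2\le 2\sigma|z_\zeta|\,\|\v_2(\zeta)\|_{\H}^2+\lambda_{i+1}^{-1/2}\mathcal{H}_i(\zeta),\qquad \mathcal{H}_i(\zeta):=Ce^{2|z_\zeta|}\big(C_1^{1/2}\widehat K^{1/2}\|\v(\zeta)\|_{\V}^2+\|\f(\zeta)\|_{\H}^2\big).
\end{align*}

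Finally I would apply the uniform Gronwall lemma (Lemma 1.1, \cite{R.Temam}) on $[s-1,s]$ (length one). The coefficient integral is $\int_{s-1}^s2\sigma|z_\zeta|\,\d\zeta=2\sigma\int_{-1}^0|z(\theta_\eta\omega)|\,\d\eta=:a_1(\omega)<+\infty$, independent of $s$; the inhomogeneity satisfies $\int_{s-1}^s\lambda_{i+1}^{-1/2}\mathcal{H}_i\le\lambda_{i+1}^{-1/2}a_2(\tau,\omega)$ uniformly in $s\le\tau$, using \eqref{AB111} for the $\|\v\|_{\V}^2$-integral and \eqref{f2-N} (which yields $\int_{s-1}^s\|\f(\zeta)\|_{\H}^2\,\d\zeta\le e\,\F(1,\tau)$) for the forcing; and $\int_{s-1}^s\|\v_2(\zeta)\|_{\H}^2\,\d\zeta\le\lambda_{i+1}^{-1}\int_{s-1}^s\|\v(\zeta)\|_{\V}^2\,\d\zeta\le\lambda_{i+1}^{-1}\widetilde K(\tau,\omega)$. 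Therefore, for all $t\ge\mathcal{T}$, $s\le\tau$ and $\v_0\in D(s-t,\theta_{-t}\omega)$,
\begin{align*}
\|(\I-\P_i)\v(s,s-t,\theta_{-s}\omega,\v_{\tau,2})\|_{\H}^2\le\big(\lambda_{i+1}^{-1}\widetilde K(\tau,\omega)+\lambda_{i+1}^{-1/2}a_2(\tau,\omega)\big)e^{a_1(\omega)},
\end{align*}
and since $\lambda_{i+1}\to+\infty$ the right-hand side tends to $0$; taking the supremum over $s\le\tau$ and $\v_0$, then letting $t\to+\infty$ (only to guarantee $t\ge\mathcal{T}$) and $i\to+\infty$ gives \eqref{FL-Property}.

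I expect the nonlinear term $e^{z_\zeta}b(\v,\v,\v_2)$ to be the one delicate point: handled by Young's inequality in the obvious way it leaves a remainder whose integral over the window is bounded (by the absorbing bounds) but does \emph{not} decay in $i$, so the estimate would not close. The device is to spend the whole factor $\|\v_2\|_{\H}^{1/2}\|\v_2\|_{\V}^{1/2}$ furnished by \eqref{b1} on the spectral gap $\|\v_2\|_{\H}\le\lambda_{i+1}^{-1/2}\|\v_2\|_{\V}$, which forces a global $\lambda_{i+1}^{-1/2}$ into the remainder; the cost is a cubic factor $\|\v\|_{\V}^3$, and this is exactly why the $\V$-absorbing estimate \eqref{AB1-V} of Lemma \ref{AbsorbingV} (not merely the $\H$-level bounds) must already be in hand before the estimate can be closed.
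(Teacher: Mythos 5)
Your argument is correct as a proof of \eqref{FL-Property} exactly as displayed (the $\H$-norm statement), and the bookkeeping is sound: the observation that $\|\v_{\tau,2}\|_{\H}\le\|\v_0\|_{\H}$ lets you reuse \eqref{v_0} and hence Lemmas \ref{Absorbing} and \ref{AbsorbingV} with an $i$-independent absorption time, the energy identity for $\v_2=\Q_i\v$ is right, and spending the full factor $\|\v_2\|_{\H}^{1/2}\|\v_2\|_{\V}^{1/2}$ from \eqref{b1} on the spectral gap is a clean way to force $\lambda_{i+1}^{-1/2}$ into the remainder. But your route is genuinely different from the paper's, and the difference matters. The paper tests the equation with $\A\v_2$ rather than $\v_2$, works one derivative higher (using \eqref{b2}, the bound $\|\A\v_2\|_{\H}^2\ge\lambda_{i+1}\|\v_2\|_{\V}^2$ from \eqref{4.9}, the integrating factor $e^{\nu\lambda_{i+1}t}$, and the $\D(\A)$-level bound \eqref{AB1-D(A)}), and thereby obtains the flattening of $\|(\I-\P_i)\v\|_{\V}$, not merely of $\|(\I-\P_i)\v\|_{\H}$. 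That $\V$-level flattening is what is actually invoked in the verification of assumption $(a_2)$ in Theorem \ref{MT1-V}, where the Cauchy property of $\{\v(s_{n^*},\dots)\}$ is established in the $\V$-norm; the $\H$-norm in the displayed statement of Lemma \ref{Flattening} appears to be a slip in the paper. Your estimate, which discards $\nu\|\v_2\|_{\V}^2$ after absorbing the error terms, cannot be upgraded to control $\|\v_2(s)\|_{\V}$ and so would not support that application. What your approach buys is a more elementary derivation with an explicit rate $O(\lambda_{i+1}^{-1/2})$ at the $\H$-level; to serve the paper's purpose you would need to redo it at the $\A\v_2$-level, replacing \eqref{b1} by \eqref{b2} and replacing the inputs \eqref{AB111}, \eqref{f2-N} by \eqref{AB1-V} and \eqref{AB1-D(A)} to control $\int_{s-1}^{s}\|\A\v\|_{\H}^2\,\d\zeta$.
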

\begin{proof}
	Let $\tau\in\R$ be fixed, and $s\leq\tau$. We multiply the first equation of system \eqref{CNSE-M} by $\A\v_2$ and integrating over $\mathcal{O}$, we get
	\begin{align*}
	&	\frac{1}{2}\frac{\d}{\d t}\|\v_2\|^2_{\V}+\|\A\v_2\|^2_{\H}=-e^{z(\theta_{t}\omega)}b(\v,\v,\A\v_2)+e^{-z(\theta_{t}\omega)}(\f,\A\v_2)+\sigma z(\theta_{t}\omega)(\v,\A\v_2)\\  &\implies \frac{1}{2}\frac{\d}{\d t}\|\v_2\|^2_{\V}+\frac{\nu}{2}\|\A\v_2\|^2_{\H}\leq Ce^{4z(\theta_{t}\omega)}\|\v\|^6_{\V}+C\|\A\v\|^2_{\H}+C\big|z(\theta_{t}\omega)\big|^2\|\v\|^2_{\V} \\ &\implies \frac{1}{2}\frac{\d}{\d t}\|\v_2\|^2_{\V}+\frac{\nu\lambda_{i+1}}{2}\|\v_2\|^2_{\V}\leq Ce^{4z(\theta_{t}\omega)}\|\v\|^6_{\V}+C\|\A\v\|^2_{\H}+C\big|z(\theta_{t}\omega)\big|^2\|\v\|^2_{\V}\nonumber\\ &\implies  \frac{\d}{\d t}\big[e^{\nu\lambda_{i+1} t}\|\v_2\|^2_{\V}\big]\leq Ce^{\nu\lambda_{i+1} t}\big[e^{4z(\theta_{t}\omega)}\|\v\|^6_{\V}+\|\A\v\|^2_{\H}+\big|z(\theta_{t}\omega)\big|^2\|\v\|^2_{\V}\big],
	\end{align*}
where we have used \eqref{b2}, \eqref{poin}, Young's inequality and \eqref{4.9}. In view of \eqref{AB1-V} and \eqref{AB1-D(A)}, we apply the uniform Gronwall lemma and obtain the required convergence \eqref{FL-Property}, which completes the proof.
\end{proof}
\subsubsection{Backward compact random attractors and their asymptotic autonomy}
In this subsection, we demonstrate the second result of this section, that is, the existence of a backward compact random attractor and asymptotic autonomy of random attractors in $\V$ for the solution of the system \eqref{SNSE}. For the existence of unique random attractor in $\V$ for autonomous SNSE driven by multiplicative noise on bounded domains, see \cite{CF}. Finally, we prove the main result of this section.

\begin{theorem}\label{MT1-V}
	Suppose that Hypothesis \ref{Hypo_f-N} is satisfied. Then, the NRDS $\Phi$ generated by the system \eqref{SNSE} with $S(\u)=\u$ has a backward compact random attractor $\widetilde{\mathcal{A}}$ in $\V$ such that $\widetilde{\mathcal{A}}$ is a backward tempered set. Furthermore, this backward compact random attractor $\widetilde{\mathcal{A}}$ backward converges to $\widetilde{\mathcal{A}}_{\infty}$, that is,
	\begin{align}\label{MT2-V}
		\lim_{\tau\to -\infty}\mathrm{dist}_{\V}(\widetilde{\mathcal{A}}(\tau,\omega),\widetilde{\mathcal{A}}_{\infty}(\omega))=0\  \emph{ for all } \ \omega\in\Omega,
	\end{align}
	where $\widetilde{\mathcal{A}}_{\infty}$ is the random attractor for the autonomous system \eqref{A-CNSE-M}. For any sequence $\tau_{n}\to-\infty$ as $n\to+\infty$, there is a subsequence $\{\tau_{n_k}\}\subseteq\{\tau_{n}\}$  such that
	\begin{align}\label{MT3-V}
		\lim_{k\to +\infty}\mathrm{dist}_{\V}(\widetilde{\mathcal{A}}(\tau_{n_k},\theta_{\tau_{n_k}}\omega),\widetilde{\mathcal{A}}_{\infty}(\theta_{\tau_{n_k}}\omega))=0\   \emph{ for all } \ \omega\in\Omega.
	\end{align}
\end{theorem}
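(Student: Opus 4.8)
The plan is to apply the abstract result Theorem \ref{Abstract-result} with $\mathbb{U}=\V$, with $\mathfrak{D}$ the universe \eqref{D-NSE} and $\mathfrak{D}_\infty$ the tempered subsets of $\H$, by verifying its four hypotheses $(a_1)$, $(a_2)$, $(b_1)$, $(b_2)$ for the cocycle $\Phi$ regarded as an NRDS on $\V$; this is legitimate because Lemma \ref{Soln} produces strong solutions in $\mathrm{C}([\tau,+\infty);\V)$ and Proposition \ref{LusinC-V} gives the Lusin continuity, hence the $\mathscr{F}$-measurability, on $\V$. Condition $(a_1)$ is immediate from Proposition \ref{IRAS-V}: $\mathcal{K}_\V$ is a closed, increasing, backward-tempered random absorbing set, so $\mathcal{K}_\V\in\mathfrak{D}$ (its measurability being settled via Egoroff and Lusin as in Proposition \ref{IRAS}). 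Condition $(b_1)$ follows from Proposition \ref{Back_conver-V}, the backward convergence of $\Phi$ to the autonomous cocycle $\Phi_\infty$ in $\V$, whose random attractor $\widetilde{\mathcal{A}}_\infty$ exists by \cite{CF}. Thus only $(a_2)$ and $(b_2)$ require genuine work.

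For $(a_2)$, i.e. $\mathfrak{D}$-backward asymptotic compactness in $\V$, observe that since no $\mathfrak{D}$-absorbing set in $\D(\A^s)$, $s>1/2$, is available for the 2D SNSE, the compact-embedding argument of Theorem \ref{MT1} is unavailable in the critical space $\V$; instead I combine finite-dimensional compactness with the backward flattening estimate of Lemma \ref{Flattening}. Fix $(\tau,\omega,D)\in\R\times\Omega\times\mathfrak{D}$ and sequences $s_n\le\tau$, $t_n\to+\infty$, $\v_{0,n}\in D(s_n-t_n,\theta_{-t_n}\omega)$, and put $\v_n:=\v(s_n,s_n-t_n,\theta_{-s_n}\omega,\v_{0,n})$. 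For $n$ large one has $t_n\ge\mathcal{T}(\tau,\omega,D)$, so Lemma \ref{AbsorbingV} (equivalently Proposition \ref{IRAS-V}) makes $\{\v_n\}$ bounded in $\V$. Writing $\v_n=\P_i\v_n+\Q_i\v_n$, given $\varepsilon>0$ Lemma \ref{Flattening} furnishes $i\in\N$ and $n_0$ with $\sup_{n\ge n_0}\|\Q_i\v_n\|_\V<\varepsilon$, while $\{\P_i\v_n\}$ stays bounded in the finite-dimensional space $\H_i=\mathrm{span}\{e_1,\dots,e_i\}$, on which all norms are equivalent, hence precompact in $\V$. A standard $\varepsilon/2$-argument together with a diagonal extraction then produces a subsequence of $\{\v_n\}$ that is Cauchy, hence convergent, in $\V$. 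This gives $(a_2)$, and with $(a_1)$ Theorem \ref{Abstract-result} yields a unique backward compact random attractor $\widetilde{\mathcal{A}}\in\mathfrak{D}$ in $\V$.

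For $(b_2)$ one checks $\mathcal{K}_{\V,\tau_0}\in\mathfrak{D}_\infty$ for some $\tau_0<0$. Since $\tau\mapsto\widehat{K}(\tau,\omega)$ is increasing, $\mathcal{K}_{\V,-1}(\omega)=\mathcal{K}_\V(-1,\omega)$, and by the Poincaré inequality \eqref{poin} it suffices to show $e^{-\frac{\nu\lambda_1}{3}t}e^{z(\theta_{-t}\omega)}\widehat{K}(-1,\theta_{-t}\omega)\to0$ as $t\to+\infty$. Expanding $\widehat{K}$ through \eqref{AB1-V}, \eqref{AB22} and \eqref{AB2}, every $z$-dependent factor — $e^{z(\theta_{-t}\omega)}$, the suprema $\sup_{\zeta\in[-2,0]}|z(\theta_{\zeta-t}\omega)|$, and the shifted time-integrals of $z$ — grows subexponentially by \eqref{Z3}, \eqref{Z4} and \eqref{Z5}, while the $\f$-integrals are controlled by the backward temperedness of $\f$ (Lemma \ref{Hypo_f1-N}) as in \eqref{Absorbing-R}; the genuine exponential decay $e^{-\frac{\nu\lambda_1}{3}t}$ then absorbs all of these, exactly as in the computation \eqref{K-T} carried out for $K$ in Theorem \ref{MT1}, now performed for the more intricate radius $\widehat{K}$. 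Hence $\mathcal{K}_{\V,-1}\in\mathfrak{D}_\infty$, and Theorem \ref{Abstract-result} delivers the convergence \eqref{MT2-V} together with, along any sequence $\tau_n\to-\infty$, the subsequential convergence \eqref{MT3-V}.

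The crux is the verification of $(a_2)$: in the absence of any higher-regularity absorbing set, the only compactness mechanism available in $\V$ is the backward flattening of Lemma \ref{Flattening}, whose proof rests on testing the equation against $\A\Q_i\v$ and on the $\V$- and $\D(\A)$-estimates of Lemma \ref{AbsorbingV}, and one must be careful that this flattening is uniform both over the initial times $s\le\tau$ and over the entire backward-tempered family $D$, which is precisely where the backward-uniform absorbing estimates of Lemma \ref{AbsorbingV} are indispensable. A lesser, purely technical point is the measurability of the increasing absorbing set, handled as in Proposition \ref{IRAS}.
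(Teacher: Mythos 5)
Your proposal is correct and follows essentially the same route as the paper: verification of $(a_1)$ via Proposition \ref{IRAS-V}, of $(b_1)$ via Proposition \ref{Back_conver-V}, of $(a_2)$ by combining the $\V$-boundedness from Lemma \ref{AbsorbingV} with the spectral decomposition $\v=\P_i\v+\Q_i\v$ and the backward flattening estimate of Lemma \ref{Flattening} to produce a Cauchy subsequence in $\V$, and of $(b_2)$ by showing $e^{-\frac{\nu\lambda_1}{3}t}\widehat{K}(-1,\theta_{-t}\omega)\to0$ exactly as in the computation \eqref{K-T}. The only cosmetic difference is your explicit diagonal extraction where the paper fixes a single sufficiently large $i_0$ and extracts once; both are standard and equivalent here.
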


\begin{proof}
	In order to complete the proof, we use an abstract result proved in \cite{YR} (see Theorem \ref{Abstract-result}). For that purpose, we confirm all the assumptions $(a_1)$, $(a_2)$, $(b_1)$ and $(b_2)$ of Theorem \ref{Abstract-result}.
	\vskip 2mm
	\noindent
	\textit{Verification of $(a_1)$ assumption}: It implies from Proposition \ref{IRAS-V} that NRDS $\Phi$ generated by the system \eqref{SNSE} with $S(\u)=\u$ has  an increasing random absorbing set $\mathcal{K}_{\V}\subset\V$, which is backward tempered, that is, $\mathcal{K}_{\V}\in\mathfrak{D}$. Hence, the condition $(a_1)$  of Theorem \ref{Abstract-result} is verified.
	\vskip 2mm
	\noindent
	\textit{Verification of $(a_2)$ assumption}: Next, we claim that $\Phi$ is $\mathfrak{D}$-backward asymptotically compact in $\V$.
	
	For this purpose, let us fix $(\tau,\omega,D)\in\R\times\Omega\times\mathfrak{D}$ and take arbitrary sequences $s_n\leq\tau$, $\tau_{n}\to+\infty$ and $\v_{0,n}\in D(s_n-t_n,\theta_{-t_n}\omega)$. We prove  that the sequence $$\{\v(s_n,s_n-t_n,\theta_{-s_n}\omega,\v_{0,n})\}_{n\in\N}$$ is precompact. Lemma \ref{AbsorbingV} gives that $\{\P_{i_0}\v(s_n,s_n-t_n,\theta_{-s_n}\omega,\v_{0,n})\}_{n\in\N}$ is bounded in $\V$ which implies its pre-compactness in the $i_0$-dimensional subspace $\V_{i_0}$ of $\V$. Then, there is an indexed  subsequence $n^*$ of  $n$ such that $\{\P_{i_0}\v(s_{n^*},s_{n^*}-t_{n^*},\theta_{-s_{n^*}}\omega,\v_{0,n^*})\}_{n^*\in\N}$ is a Cauchy sequence in $\V_{i_0}$. On the other hand, for each $\varepsilon>0$, let $n^*, m^*$ and $i_0$ be sufficiently large so that
	\begin{align*}
		&\|\v(s_{n^*},s_{n^*}-t_{n^*},\theta_{-s_{n^*}}\omega,\v_{0,n^*})-\v(s_{m^*},s_{m^*}-t_{m^*},\theta_{-s_{m^*}}\omega,\v_{0,m^*})\|_{\V}\nonumber\\&\leq \|\P_{i_0}\v(s_{n^*},s_{n^*}-t_{n^*},\theta_{-s_{n^*}}\omega,\v_{0,n^*})-\P_{i_0}\v(s_{m^*},s_{m^*}-t_{m^*},\theta_{-s_{m^*}}\omega,\v_{0,m^*})\|_{\V}\nonumber\\&\qquad+\|(\I-\P_{i_0})\v(s_{n^*},s_{n^*}-t_{n^*},\theta_{-s_{n^*}}\omega,\v_{0,n^*})\|_{\V}\nonumber\\&\qquad+\|(\I-\P_{i_0})\v(s_{m^*},s_{m^*}-t_{m^*},\theta_{-s_{m^*}}\omega,\v_{0,m^*})\|_{\V}\leq\varepsilon,
	\end{align*}
	where we have used Lemma \ref{Flattening} and the fact that $\{\P_{i_0}\v(s_{n^*},s_{n^*}-t_{n^*},\theta_{-s_{n^*}}\omega,\v_{0,n^*})\}_{n^*\in\N}$ is a Cauchy sequence in $\V_{i_0}$. Therefore, we obtain that $\{\v(s_{n^*},s_{n^*}-t_{n^*},\theta_{-s_{n^*}}\omega,\v_{0,n^*})\}_{n\in\N}$ is a Cauchy sequence in $\V$ which implies that the sequence $\{\v(s_{n^*},s_{n^*}-t_{n^*},\theta_{-s_{n^*}}\omega,\v_{0,n^*})\}_{n\in\N}$ is convergent. Hence, $\Phi$ is $\mathfrak{D}$-backward asymptotically compact in $\V$.
	
	Up to this point, we have shown that the assumptions $(a_1)$ and $(a_2)$ of Theorem \ref{Abstract-result} are satisfied. Therefore, the NRDS $\Phi$ generated by SNSE \eqref{SNSE} has a backward compact random attractor $\widetilde{\mathcal{A}}\in\mathfrak{D}$.
	
	\vskip 2mm
	\noindent
	\textit{Verification of $(b_1)$ assumption}: Proposition \ref{Back_conver-V} gives the backward convergence of the NRDS $\Phi$ to the RDS $\Phi_{\infty}$, that is, the assumption $(b_2)$ of Theorem \ref{Abstract-result} is verified.
	\vskip 2mm
	\noindent
	\textit{Verification of $(b_2)$ assumption}: Now, it only remains to verify the assumption $(b_2)$ of Theorem \ref{Abstract-result}, that is, $\mathcal{K}^{-1}_{\V}\in\mathfrak{D}_{\infty}$, where $\mathcal{K}^{-1}_{\V}(\omega)=\cup_{\tau\leq-1}\mathcal{K}_{\V}(\tau,\omega)$. Since, $\mathcal{K}_{\V}(\tau,\omega)$ is increasing in the $\tau$, which implies that $\mathcal{K}^{-1}_{\V}(\omega)=\mathcal{K}_{\V}(-1,\omega)$. From the definition \eqref{IRAS1-V} of $\mathcal{K}_{\V}$ , we obtain
	\begin{align}\label{K-T1}
		e^{-\frac{\nu\lambda_{1}}{3}t}\|\mathcal{K}^{-1}_{\V}(\theta_{-t}\omega)\|^2_{\H}\leq e^{-\frac{\nu\lambda_{1}}{3}t}\|\mathcal{K}^{-1}_{\V}(\theta_{-t}\omega)\|^2_{\V}\leq e^{-\frac{\nu\lambda_{1}}{3}t}\widehat{K}(-1,\theta_{-t}\omega).
	\end{align}
	It follows from \eqref{K-T} that $e^{-\frac{\nu\lambda_{1}}{3}t}K(-1,\theta_{-t}\omega)\to0$ as $t\to+\infty$. By the definition of $\widehat{K}(\tau,\omega)$ (see \eqref{AB1-V}), we conclude that $e^{-\frac{\nu\lambda_{1}}{3}t}\widehat{K}(-1,\theta_{-t}\omega)\to0$ as $t\to+\infty$ which  along with \eqref{K-T1} implies $\mathcal{K}^{-1}_{\V}\in\mathfrak{D}_{\infty}$. Hence, the convergences \eqref{MT2} and \eqref{MT3} follow from Theorem \ref{Abstract-result} immediately.
\end{proof}

\section{2D SNSE: Additive noise}\label{sec4}\setcounter{equation}{0}
In this section, we consider 2D SNSE driven by additive white noise, that is, $S(\u)$ is independent of $\u$ and establish the asymptotic autonomy of random attractors. Let us consider the 2D SNSE \eqref{SNSE} perturbed by additive white noise for $t\geq \tau,$ $\tau\in\mathbb{R}$ and $S(\u)=\h\in\D(\A)$ as
\begin{equation}\label{SNSE-A}
	\left\{
	\begin{aligned}
		\frac{\d\u(t)}{\d t}+\nu \A\u(t)+\B(\u(t))&=\f(t) +\h(x)\frac{\d \W(t)}{\d t} , \\
		\u(x,\tau)&=\u_{0}(x),	\ \ \ x\in \mathcal{O},
	\end{aligned}
	\right.
\end{equation}
where $\W(t,\omega)$ is the standard scalar Wiener process on the probability space $(\Omega, \mathscr{F}, \mathbb{P})$ (see Section \ref{2.5} above).

Let us define $\v(t,\tau,\omega,\v_{\tau})=\u(t,\tau,\omega,\u_{\tau})-\h(x)z(\theta_{t}\omega)$, where $z$ is defined by \eqref{OU1} and satisfies \eqref{OU2}, and $\u$ is the solution of \eqref{SNSE-A}. Then $\v$ satisfies:
\begin{equation}\label{CNSE-A}
	\left\{
	\begin{aligned}
		\frac{\d\v}{\d t} +\nu \A\v+ \B(\v+\h z)&= \boldsymbol{f} + \sigma\h z -\nu z\A\h , \quad t> \tau,\ \tau\in\R ,\\
		\v(x,\tau)&=\v_{0}(x)=\u_{0}(x)-\h(x)z(\theta_{\tau}\omega), \ \ x\in\mathcal{O},
	\end{aligned}
	\right.
\end{equation}
in $\V'$ (in weak sense).  The following lemma demonstrates  that the system \eqref{CNSE-A} has unique weak and strong solutions.
\begin{lemma}\label{Soln-N}
	Suppose that $\f\in\mathrm{L}^2_{\mathrm{loc}}(\R;\H)$. For each $(\tau,\omega,\v_{\tau})\in\R\times\Omega\times\H$, the system \eqref{CNSE-A} has a unique solution $\v(\cdot,\tau,\omega,\v_{\tau})\in\mathrm{C}([\tau,+\infty);\H)\cap\mathrm{L}^2_{\mathrm{loc}}(\tau,+\infty;\V)$ such that $\v$ is continuous with respect to the  initial data. In addition, for $\v_{\tau}\in \V$, there exists a unique strong solution $\v(\cdot,\tau,\omega,\v_{\tau})\in \mathrm{C}([0, +\infty); \V) \cap \mathrm{L}^{2}_{\mathrm{loc}}(0, +\infty; \D(\A))$ such that $\v$ is continuous with respect to the initial data.
\end{lemma}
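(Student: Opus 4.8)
The plan is to run the classical Faedo--Galerkin construction, exactly as in the proof of Lemma~\ref{Soln}; the only genuinely new bookkeeping is that the quadratic term of \eqref{CNSE-A} is evaluated at the shifted field $\u=\v+\h z(\theta_t\omega)$ and that the right-hand side carries the extra term $-\nu z(\theta_t\omega)\A\h$, which lies in $\H$ precisely because $\h\in\D(\A)$ (so in particular also $\h\in\V$). First I would fix $(\tau,\omega,\v_\tau)$ and let $\v_n(t)=\sum_{j=1}^{n}a^n_j(t)e_j\in\H_n$ solve the system obtained by applying $\P_n$ to \eqref{CNSE-A} with $\v_n(\tau)=\P_n\v_\tau$; since $t\mapsto z(\theta_t\omega)$ is continuous and the nonlinearity is locally Lipschitz in $\v_n$, a unique maximal solution exists on some interval $[\tau,T_n)$.

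For the a priori bound I would test with $\v_n$ and write $\u_n=\v_n+\h z(\theta_t\omega)$: by \eqref{b0} the nonlinear contribution reduces to $\langle\B(\u_n),\v_n\rangle = z(\theta_t\omega)\,b(\u_n,\h,\u_n)$, and by \eqref{b1} this is bounded by $C|z(\theta_t\omega)|\,\|\h\|_{\V}\|\u_n\|_{\H}\|\u_n\|_{\V}$, hence, after Young's inequality and $\|\u_n\|_{\V}\le\|\v_n\|_{\V}+|z(\theta_t\omega)|\,\|\h\|_{\V}$, is absorbed into $\tfrac{\nu}{2}\|\v_n\|_{\V}^2$ up to lower-order terms involving only $\|\v_n\|_{\H}^2$, powers of $|z(\theta_t\omega)|$ and $\|\h\|_{\V}$. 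Using \eqref{poin}, the local boundedness of $z(\theta_\cdot\omega)$ and $\f\in\mathrm{L}^2_{\mathrm{loc}}(\R;\H)$, Gronwall's inequality then yields bounds for $\v_n$ in $\mathrm{C}([\tau,T];\H)\cap\mathrm{L}^2(\tau,T;\V)$ uniform in $n$ on each $[\tau,T]$, so that $T_n=+\infty$. The equation shows $\tfrac{\d}{\d t}\v_n$ is bounded in $\mathrm{L}^2(\tau,T;\V')$ (the bilinear term lands in $\V'$ by \eqref{b1}), whence the Aubin--Lions--Simon lemma gives a subsequence converging weakly in $\mathrm{L}^2(\tau,T;\V)$, weakly-$*$ in $\mathrm{L}^\infty(\tau,T;\H)$ and strongly in $\mathrm{L}^2(\tau,T;\H)$ --- enough to pass to the limit in $\B(\u_n)$. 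The limit $\v$ is a weak solution, $\tfrac{\d}{\d t}\v\in\mathrm{L}^2_{\mathrm{loc}}(\tau,+\infty;\V')$, hence $\v\in\mathrm{C}([\tau,+\infty);\H)$ by the Lions--Magenes interpolation theorem.

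For uniqueness and continuous dependence I would take two weak solutions $\v_1,\v_2$ with data $\v_{\tau,1},\v_{\tau,2}$ and set $\mathscr{V}:=\v_1-\v_2$, which equals $\u_1-\u_2$ since the $\h z$ shift cancels; then $\mathscr{V}$ solves $\tfrac{\d}{\d t}\mathscr{V}+\nu\A\mathscr{V}+\B(\u_1)-\B(\u_2)=0$, and testing with $\mathscr{V}$ and using the identity \eqref{441} (with $\u_1,\u_2$ in the roles of $\u,\v$) gives $\langle\B(\u_1)-\B(\u_2),\mathscr{V}\rangle=\langle\B(\mathscr{V},\u_2),\mathscr{V}\rangle$, which by \eqref{b1} is $\le\tfrac{\nu}{2}\|\mathscr{V}\|_{\V}^2+C\|\u_2\|_{\V}^2\|\mathscr{V}\|_{\H}^2$. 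Gronwall's inequality then yields $\|\mathscr{V}(t)\|_{\H}^2\le\|\mathscr{V}(\tau)\|_{\H}^2\exp\!\bigl(C\!\int_\tau^t\|\u_2\|_{\V}^2\,\d\zeta\bigr)$, the exponent being finite since $\u_2\in\mathrm{L}^2_{\mathrm{loc}}(\tau,+\infty;\V)$; taking $\v_{\tau,1}=\v_{\tau,2}$ gives uniqueness, and in general the estimate is exactly the asserted continuity in the initial datum.

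Finally, for the strong solution when $\v_\tau\in\V$, I would test the Galerkin system with $\A\v_n$ and use $\A\v_n=\A\u_n-z(\theta_t\omega)\A\h$: the nonlinear terms $b(\u_n,\u_n,\A\u_n)$ and $z(\theta_t\omega)\,b(\u_n,\u_n,\A\h)$ are controlled by the two-dimensional estimate \eqref{b2} (this is where $\h\in\D(\A)$ is used) and, after Young's inequality, are absorbed into $\tfrac{\nu}{4}\|\A\u_n\|_{\H}^2$ at the cost of lower-order terms that are integrable on every $[\tau,T]$ once the already established $\mathrm{L}^\infty(\H)$ and $\mathrm{L}^2(\V)$ bounds are inserted; Gronwall's inequality then produces bounds for $\v_n$ in $\mathrm{C}([\tau,T];\V)\cap\mathrm{L}^2(\tau,T;\D(\A))$ depending on $\|\v_\tau\|_{\V}$. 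Passing to the limit as before and using uniqueness identifies the strong solution with the weak one; $\tfrac{\d}{\d t}\v\in\mathrm{L}^2_{\mathrm{loc}}(\tau,+\infty;\H)$ gives $\v\in\mathrm{C}([\tau,+\infty);\V)$ again by Lions--Magenes, and the difference estimate above (now carried out in $\V$) gives the continuity in the initial datum. I do not expect a serious obstacle here: every step is classical (cf.\ \cite{R.Temam,Temam1}), and the only items deserving a little care are the bookkeeping around the shift $\u=\v+\h z(\theta_t\omega)$ in the nonlinearity and the role of the hypothesis $\h\in\D(\A)$, which is exactly what makes the source term $-\nu z(\theta_t\omega)\A\h$ and the $\A\v_n$-estimate meaningful.
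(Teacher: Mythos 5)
Your proposal is correct and follows exactly the route the paper itself takes: the paper's proof of this lemma is simply an appeal to the standard Faedo--Galerkin method (citing Theorems 4.5 and 4.6 of \cite{BL}), and your argument is a faithful, correctly bookkept execution of that method, including the right handling of the shift $\u=\v+\h z(\theta_t\omega)$ and of the source term $-\nu z(\theta_t\omega)\A\h$ via $\h\in\D(\A)$. No gaps to report.
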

\begin{proof}
	One can prove the existence and uniqueness of solution by a standard Faedo-Galerkin approximation method (cf. Theorem 4.5 in \cite{BL}). For continuity with respect to the initial data $\v_{\tau}$, see the proof of Theorem 4.6 in \cite{BL}.
\end{proof}
In \cite{LXY}, authors have established the existence of backward compact random attractors for  stochastic $g$-Navier-Stokes equations driven by small additive noise which covers 2D SNSE \eqref{SNSE-A} also. Therefore, we are not proving the existence of backward compact random attractors for \eqref{SNSE-A} here. If we do not consider the small noise intensity as in \cite{LXY}, we need the following extra assumption on $\h(\cdot)$ to obtain the existence of backward compact random attractors for \eqref{SNSE-A}.
\begin{hypothesis}\label{AonH-N}
	The function \emph{$\h(\cdot)$} satisfies the following condition: there exists a strictly positive constant ${\aleph}$ such that
	\begin{align*}
		|b(\u,\h,\u)|\leq {\aleph}\|\u\|^2_{\H}, \ \  \text{ for all } \ \u\in\H.
	\end{align*}
\end{hypothesis}

\subsection{Asymptotic autonomy of random attractors in $\H$ as well as in $\V$}
In this section, we prove the asymptotic autonomy of backward compact random attractors in $\H$ as well as in $\V$. First, we obtain the results which help us to verify the assumptions of  the abstract result stated in Theorem \ref{Abstract-result}.
\subsubsection{Backward convergence of NRDS}
In this subsection, we first consider the corresponding autonomous system of the non-autonomous system \eqref{SNSE-A} and prove that the solution to the system \eqref{CNSE-A} converges to the solution of the corresponding autonomous system as $\tau\to-\infty$ in $\H$ as well as in $\V$. Consider the autonomous stochastic 2D NSE subjected to additive white noise:
\begin{equation}\label{A-SNSE}
	\left\{
	\begin{aligned}
		\frac{\d\widetilde{\u}(t)}{\d t}+\nu \A\widetilde{\u}(t)+\B(\widetilde{\u}(t))&=\f_{\infty} +\h(x)\frac{\d \W(t)}{\d t}, \\
		\widetilde{\u}(x,0)&=\widetilde{\u}_{0}(x),\ \	x\in \mathcal{O}.
	\end{aligned}
	\right.
\end{equation}
Let $\widetilde{\v}(t,\omega)=\widetilde{\u}(t,\omega)-\h(x)z(\theta_{t}\omega)$. Then, system \eqref{A-SNSE} can be written in the following pathwise deterministic system:
\begin{eqnarray}\label{A-CNSE}
	\left\{
	\begin{aligned}
		\frac{\d\widetilde{\v}(t)}{\d t} +\nu \A\widetilde{\v}(t)+ \B(\widetilde{\v}(t)+\h z(\theta_{t}\omega)) &= {\boldsymbol{f}}_{\infty} + \sigma\h z(\theta_{t}\omega) -\nu z(\theta_{t}\omega)\A\h , \ t> \tau, \tau\in\R ,\\
		\widetilde{\v}(x,0)&=\widetilde{\v}_{0}(x)=\widetilde{\u}_{0}(x)-\h(x)z(\omega), \ \ x\in\mathcal{O},
	\end{aligned}
	\right.
\end{eqnarray}
in $\V'$ (in weak sense).

\begin{proposition}\label{Back_conver-N}
	Suppose that Hypothesis \ref{Hypo_f-N} is satisfied. Then, the solution $\v$ of the system \eqref{CNSE-A} backward converges to the solution $\widetilde{\v}$ of the system \eqref{A-CNSE}, that is,
	\begin{align}\label{BC-A}
		\lim_{\tau\to -\infty}\|\v(T+\tau,\tau,\theta_{-\tau}\omega,\v_{\tau})-\widetilde{\v}(t,\omega,\widetilde{\v}_0)\|_{\H}=0, \ \ \text{ for all } T>0 \text{ and } \omega\in\Omega,
	\end{align}
	whenever $\|\v_{\tau}-\widetilde{\v}_0\|_{\H}\to0$ as $\tau\to-\infty.$
\end{proposition}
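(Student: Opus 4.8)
The plan is to run the same energy argument as in Proposition \ref{Back_conver}, now comparing the two conjugated equations \eqref{CNSE-A} and \eqref{A-CNSE}. Put $\mathscr{V}^{\tau}(t):=\v(t+\tau,\tau,\theta_{-\tau}\omega,\v_{\tau})-\widetilde{\v}(t,\omega,\widetilde{\v}_0)$ for $t\geq 0$. Since $\theta_{t+\tau}(\theta_{-\tau}\omega)=\theta_{t}\omega$, the map $t\mapsto\v(t+\tau,\tau,\theta_{-\tau}\omega,\v_{\tau})$ solves \eqref{CNSE-A} with the same Ornstein--Uhlenbeck factor $z(\theta_{t}\omega)$ as the autonomous equation \eqref{A-CNSE}, but with $\f$ replaced by $\f(\cdot+\tau)$; hence the driving terms $\sigma\h z(\theta_{t}\omega)$ and $-\nu z(\theta_{t}\omega)\A\h$ cancel upon subtraction, and in $\V'$,
\[
\frac{\d\mathscr{V}^{\tau}}{\d t}=-\nu\A\mathscr{V}^{\tau}-\big[\B(\v+\h z)-\B(\widetilde{\v}+\h z)\big]+\big[\f(t+\tau)-\f_{\infty}\big].
\]
Taking the $\H$-inner product with $\mathscr{V}^{\tau}$ and using \eqref{2.7a} together with \eqref{b0} and \eqref{441} (applied with $\u=\v+\h z$, $\v=\widetilde{\v}+\h z$, so that $\u-\v=\mathscr{V}^{\tau}$), the nonlinear term collapses to $-b(\mathscr{V}^{\tau},\widetilde{\v}+\h z,\mathscr{V}^{\tau})$, and
\[
\frac12\frac{\d}{\d t}\|\mathscr{V}^{\tau}\|^2_{\H}+\nu\|\mathscr{V}^{\tau}\|^2_{\V}=-b(\mathscr{V}^{\tau},\widetilde{\v}+\h z,\mathscr{V}^{\tau})+(\f(t+\tau)-\f_{\infty},\mathscr{V}^{\tau}).
\]

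Next I would bound the right-hand side. Splitting $b(\mathscr{V}^{\tau},\widetilde{\v}+\h z,\mathscr{V}^{\tau})=b(\mathscr{V}^{\tau},\widetilde{\v},\mathscr{V}^{\tau})+z(\theta_{t}\omega)\,b(\mathscr{V}^{\tau},\h,\mathscr{V}^{\tau})$ and applying \eqref{b1} (with $\h\in\D(\A)\subset\V$) and Young's inequality yields $|b(\mathscr{V}^{\tau},\widetilde{\v}+\h z,\mathscr{V}^{\tau})|\leq C\big(\|\widetilde{\v}\|^2_{\V}+|z(\theta_{t}\omega)|^2\|\h\|^2_{\V}\big)\|\mathscr{V}^{\tau}\|^2_{\H}+\frac{\nu}{2}\|\mathscr{V}^{\tau}\|^2_{\V}$, while H\"older's and Young's inequalities give $|(\f(t+\tau)-\f_{\infty},\mathscr{V}^{\tau})|\leq\|\f(t+\tau)-\f_{\infty}\|^2_{\H}+\frac14\|\mathscr{V}^{\tau}\|^2_{\H}$. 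Collecting terms produces a differential inequality of exactly the form \eqref{BC6},
\[
\frac{\d}{\d t}\|\mathscr{V}^{\tau}\|^2_{\H}+\nu\|\mathscr{V}^{\tau}\|^2_{\V}\leq C\big[S(t)\|\mathscr{V}^{\tau}\|^2_{\H}+\|\f(t+\tau)-\f_{\infty}\|^2_{\H}\big],\qquad S(t):=\|\widetilde{\v}(t)\|^2_{\V}+|z(\theta_{t}\omega)|^2+1,
\]
where crucially $S$ is independent of $\tau$.

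Then Gronwall's inequality over $(0,T)$ gives
\[
\|\mathscr{V}^{\tau}(T)\|^2_{\H}\leq\Big[\|\v_{\tau}-\widetilde{\v}_0\|^2_{\H}+C\int_{0}^{T}\|\f(t+\tau)-\f_{\infty}\|^2_{\H}\d t\Big]\exp\Big(C\int_{0}^{T}S(t)\d t\Big).
\]
By Lemma \ref{Soln-N}, $\widetilde{\v}\in\mathrm{L}^2(0,T;\V)$, and $z(\theta_{\cdot}\omega)$ is continuous, so $\int_{0}^{T}S(t)\d t<+\infty$ is a fixed constant independent of $\tau$; Hypothesis \ref{Hypo_f-N} gives $\int_{0}^{T}\|\f(t+\tau)-\f_{\infty}\|^2_{\H}\d t\leq\int_{-\infty}^{\tau+T}\|\f(t)-\f_{\infty}\|^2_{\H}\d t\to0$ as $\tau\to-\infty$; and by hypothesis $\|\v_{\tau}-\widetilde{\v}_0\|_{\H}\to0$. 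Letting $\tau\to-\infty$ yields \eqref{BC-A}. Integrating the differential inequality over $(0,T)$ once more and using what has just been proved also gives $\int_{0}^{T}\|\mathscr{V}^{\tau}(t)\|^2_{\V}\d t\to0$, which will be convenient for the corresponding statement in $\V$.

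The only delicate point, exactly as in Proposition \ref{Back_conver}, is the handling of the nonlinear term: one must apply the cancellation identity \eqref{441} to the \emph{shifted} fields $\v+\h z$ and $\widetilde{\v}+\h z$ (whose difference is still $\mathscr{V}^{\tau}$, so that $\h z$ drops out of the first two slots but survives in the last) and then check that the resulting coefficient $|z(\theta_{t}\omega)|^2\|\h\|^2_{\V}$ is locally integrable in $t$, which it is by continuity of the Ornstein--Uhlenbeck process and $\h\in\D(\A)$. Everything else is a routine energy estimate; if one prefers, the term $z(\theta_{t}\omega)b(\mathscr{V}^{\tau},\h,\mathscr{V}^{\tau})$ can alternatively be controlled by $\aleph|z(\theta_{t}\omega)|\|\mathscr{V}^{\tau}\|^2_{\H}$ via Hypothesis \ref{AonH-N}.
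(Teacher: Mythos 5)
Your proposal is correct and follows essentially the same route as the paper's proof: the same difference variable $\mathscr{V}^{\tau}$, the same cancellation of the $\sigma\h z$ and $-\nu z\A\h$ terms, the same application of \eqref{441} to the shifted fields $\v+\h z$ and $\widetilde{\v}+\h z$ reducing the nonlinearity to a single trilinear term, and the same Gronwall argument with Hypothesis \ref{Hypo_f-N}. The only cosmetic differences are that the paper estimates $b(\mathscr{V}^{\tau},\mathscr{V}^{\tau},\widetilde{\v}+\h z)$ directly via \eqref{b1} with coefficient $\|\widetilde{\v}+\h z\|^2_{\V}$ rather than splitting off the $\h z$ part, and absorbs the forcing term through the Poincar\'e inequality instead of into the Gronwall coefficient.
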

\begin{proof}
	Let $\mathscr{V}^{\tau}(t):=\v(t+\tau,\tau,\theta_{-\tau}\omega,\v_{\tau})-\widetilde{\v}(t,\omega,\widetilde{\v}_0)$ for $t\geq0$. From \eqref{CNSE-A} and \eqref{A-CNSE}, we have
	\begin{align}\label{BC1-A}
		\frac{\d\mathscr{V}^{\tau}}{\d t}&=-\nu \A\mathscr{V}^{\tau}-\left[\B\big(\v+\h z(\theta_{t}\omega)\big)-\B\big(\widetilde{\v}+\h z(\theta_{t}\omega)\big)\right]+\left[\f(t+\tau)-\f_{\infty}\right],
	\end{align}
	in $\V'$ (in weak sense). In view of \eqref{BC1-A}, we obtain
	\begin{align}\label{BC2-A}
		\frac{1}{2}\frac{\d }{\d t}\|\mathscr{V}^{\tau}\|^2_{\H}&=-\nu\|\mathscr{V}^{\tau}\|^2_{\V}-\left\langle\B\big(\v+\h z(\theta_{t}\omega)\big)-\B\big(\widetilde{\v}+\h z(\theta_{t}\omega)\big),\v-\widetilde{\v}\right\rangle\nonumber\\&\quad+(\f(t+\tau)-\f_{\infty},\mathscr{V}^{\tau}).
	\end{align}
	Applying \eqref{b0}, \eqref{441}, H\"older's and Young's inequalities, we infer
	\begin{align}\label{BC4-A}
		&\left|\left\langle\B\big(\v+\h z(\theta_{t}\omega)\big)-\B\big(\widetilde{\v}+\h z(\theta_{t}\omega)\big),\v-\widetilde{\v}\right\rangle\right|\nonumber\\&=\left|\left\langle\B\big(\v+\h z(\theta_{t}\omega)\big)-\B\big(\widetilde{\v}+\h z(\theta_{t}\omega)\big),(\v+\h z(\theta_{t}\omega))-(\widetilde{\v}+\h z(\theta_{t}\omega))\right\rangle\right|\nonumber\\&=\left|b(\mathscr{V}^{\tau},\mathscr{V}^{\tau},\widetilde{\v}+\h z(\theta_{t}\omega))\right|\nonumber\\&\leq C\|\widetilde{\v}+\h z(\theta_{t}\omega)\|^2_{\V}\|\mathscr{V}^{\tau}\|^2_{\H}+\frac{\nu}{4}\|\mathscr{V}^{\tau}\|^2_{\V},
	\end{align}
	and
	\begin{align}\label{BC5-A}
		\left|(\f(t+\tau)-\f_{\infty},\mathscr{V}^{\tau})\right|&\leq\frac{1}{\nu\lambda_{1}}\|\f(t+\tau)-\f_{\infty}\|^2_{\H}+\frac{\nu\lambda_{1}}{4}\|\mathscr{V}^{\tau}\|^2_{\H}\nonumber\\&\leq\frac{1}{\nu\lambda_{1}}\|\f(t+\tau)-\f_{\infty}\|^2_{\H}+\frac{\nu}{4}\|\mathscr{V}^{\tau}\|^2_{\V},
	\end{align}
	where we have used \eqref{poin} in the final inequality. Combining \eqref{BC2-A}-\eqref{BC5-A}, we arrive at
	\begin{align}\label{BC6-A}
		\frac{\d }{\d t}\|\mathscr{V}^{\tau}\|^2_{\H}+\nu\|\mathscr{V}^{\tau}\|^2_{\V}\leq C\|\widetilde{\v}+\h z(\theta_{t}\omega)\|^2_{\V}\|\mathscr{V}^{\tau}\|^2_{\H}+\frac{1}{\nu\lambda_{1}}\|\f(t+\tau)-\f_{\infty}\|^2_{\H}.
	\end{align}
	Making use of Gronwall's inequality in \eqref{BC6-A} over $(0,T)$, we obtain
	\begin{align*}
		\|\mathscr{V}^{\tau}(T)\|^2_{\H}\leq \left[\|\mathscr{V}^{\tau}(0)\|^2_{\H}+\frac{1}{\nu\lambda_{1}}\int_{0}^{T}\|\f(t+\tau)-\f_{\infty}\|^2_{\H} \d t\right]e^{C\int_{0}^{T}\|\widetilde{\v}+\h z(\theta_{t}\omega)\|^2_{\V}\d t}.
	\end{align*}
	Since $\h\in\D(\A)$, $z$ is continuous and $\widetilde{\v}\in\mathrm{L}^2(0,T;\V)$, it implies that
	\begin{align}\label{BC7-A}
		\int_{0}^{T}C\|\widetilde{\v}+\h z(\theta_{t}\omega)\|^2_{\V}\d t<+\infty.	
	\end{align}
	From Hypothesis \ref{Hypo_f-N}, we deduce that
	\begin{align}\label{BC8-A}
		\int_{0}^{T}\|\f(t+\tau)-\f_{\infty}\|^2_{\H} \d t\leq \int_{-\infty}^{\tau+T}\|\f(t)-\f_{\infty}\|^2_{\H} \d t\to 0 \ \text{ as } \ \tau\to -\infty.
	\end{align}
	Using \eqref{BC7-A}-\eqref{BC8-A} and $\|\mathscr{V}^{\tau}(0)\|^2_{\H}=\|\v_{\tau}-\widetilde{\v}_0\|_{\H}\to0$ as $\tau\to-\infty$, we conclude the proof. Furthermore, integrating \eqref{BC6-A} from $0$ to $T$, we get
	\begin{align*}
		&\int_{0}^{T}\|\mathscr{V}^{\tau}(t)\|^2_{\V}\d t\nonumber\\&\leq \frac{1}{\nu}\|\mathscr{V}^{\tau}(0)\|^2_{\H}+C\sup_{t\in[0,T]}\|\mathscr{V}^{\tau}(t)\|^2_{\H}\int_{0}^{T}\|\widetilde{\v}+\h z(\theta_{t}\omega)\|^2_{\V}\d t+\frac{1}{\nu^2\lambda_{1}}\int_{0}^{T}\|\f(t+\tau)-\f_{\infty}\|^2_{\H}\d t.
	\end{align*}
Using \eqref{BC-A}, \eqref{BC7-A}, \eqref{BC8-A} and the fact that $\|\mathscr{V}^{\tau}(0)\|^2_{\H}=\|\v_{\tau}-\widetilde{\v}_0\|_{\H}\to0$ as $\tau\to-\infty$, we conclude
\begin{align}\label{BC9-A}
	\int_{0}^{T}\|\mathscr{V}^{\tau}(t)\|^2_{\V}\d t \to 0 \text{ as } \tau \to -\infty,
\end{align}
for all $T>0$.
\end{proof}

\begin{proposition}\label{Back_conver-VA}
	Suppose that Hypothesis \ref{Hypo_f-N} is satisfied. Then, the solution $\v$ of the system \eqref{CNSE-A} backward converges to the solution $\widetilde{\v}$ of the system \eqref{A-CNSE} in $\V$, that is,
	\begin{align*}
		\lim_{\tau\to -\infty}\|\v(T+\tau,\tau,\theta_{-\tau}\omega,\v_{\tau})-\widetilde{\v}(t,\omega,\widetilde{\v}_0)\|_{\V}=0, \ \ \text{ for all } T>0 \text{ and } \omega\in\Omega,
	\end{align*}
	whenever $\|\v_{\tau}-\widetilde{\v}_0\|_{\V}\to0$ as $\tau\to-\infty.$
\end{proposition}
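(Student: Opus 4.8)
The plan is to repeat the $\mathbb{V}$-level energy estimate from the multiplicative case (Proposition~\ref{Back_conver-V}), now for the additive system \eqref{CNSE-A}, and to bootstrap on the $\H$-convergence already obtained in Proposition~\ref{Back_conver-N}. Write $\mathscr{V}^{\tau}(t):=\v(t+\tau,\tau,\theta_{-\tau}\omega,\v_{\tau})-\widetilde{\v}(t,\omega,\widetilde{\v}_0)$ and $\w(t):=\widetilde{\v}(t)+\h\,z(\theta_{t}\omega)$; by Lemma~\ref{Soln-N} (with $\v_{\tau},\widetilde{\v}_0\in\V$) both $\v$ and $\widetilde{\v}$ are strong solutions, so $\mathscr{V}^{\tau}(t)\in\D(\A)$ for a.e.\ $t$, and $\w\in\mathrm{L}^2_{\mathrm{loc}}(0,+\infty;\D(\A))$ since $\h\in\D(\A)$. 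As $\v+\h z=\w+\mathscr{V}^{\tau}$, the bilinear difference in \eqref{BC1-A} expands as $\B(\v+\h z)-\B(\widetilde{\v}+\h z)=\B(\mathscr{V}^{\tau},\mathscr{V}^{\tau})+\B(\mathscr{V}^{\tau},\w)+\B(\w,\mathscr{V}^{\tau})$, and taking the $\H$-inner product of \eqref{BC1-A} with $\A\mathscr{V}^{\tau}$ (note the $\sigma\h z$ and $-\nu z\A\h$ terms have cancelled in \eqref{BC1-A}) gives
\[
\frac{1}{2}\frac{\d}{\d t}\|\mathscr{V}^{\tau}\|^2_{\V}+\nu\|\A\mathscr{V}^{\tau}\|^2_{\H}=-b(\mathscr{V}^{\tau},\mathscr{V}^{\tau},\A\mathscr{V}^{\tau})-b(\mathscr{V}^{\tau},\w,\A\mathscr{V}^{\tau})-b(\w,\mathscr{V}^{\tau},\A\mathscr{V}^{\tau})+(\f(t+\tau)-\f_{\infty},\A\mathscr{V}^{\tau}).
\]

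Next I would estimate the four terms on the right by \eqref{b2}, \eqref{poin} and Young's inequality, exactly as in \eqref{BC4-V}--\eqref{BC7-V}: one obtains $|b(\mathscr{V}^{\tau},\mathscr{V}^{\tau},\A\mathscr{V}^{\tau})|\le C\|\mathscr{V}^{\tau}\|^2_{\H}\|\mathscr{V}^{\tau}\|^4_{\V}+\frac{\nu}{8}\|\A\mathscr{V}^{\tau}\|^2_{\H}$, $|b(\w,\mathscr{V}^{\tau},\A\mathscr{V}^{\tau})|\le C\|\w\|^4_{\V}\|\mathscr{V}^{\tau}\|^2_{\V}+\frac{\nu}{8}\|\A\mathscr{V}^{\tau}\|^2_{\H}$, $|b(\mathscr{V}^{\tau},\w,\A\mathscr{V}^{\tau})|\le C\|\w\|_{\V}\|\A\w\|_{\H}\|\mathscr{V}^{\tau}\|^2_{\V}+\frac{\nu}{8}\|\A\mathscr{V}^{\tau}\|^2_{\H}$ (here $\w$ is the middle argument, which must lie in $\D(\A)$), and $|(\f(t+\tau)-\f_{\infty},\A\mathscr{V}^{\tau})|\le C\|\f(t+\tau)-\f_{\infty}\|^2_{\H}+\frac{\nu}{8}\|\A\mathscr{V}^{\tau}\|^2_{\H}$. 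Absorbing the four $\frac{\nu}{8}$-terms into the left-hand side, I arrive at
\[
\frac{\d}{\d t}\|\mathscr{V}^{\tau}\|^2_{\V}\le C\big[\widehat{S}(t)\,\|\mathscr{V}^{\tau}\|^2_{\V}+\|\f(t+\tau)-\f_{\infty}\|^2_{\H}\big],\qquad \widehat{S}(t):=\|\mathscr{V}^{\tau}\|^2_{\H}\|\mathscr{V}^{\tau}\|^2_{\V}+\|\w\|^4_{\V}+\|\w\|_{\V}\|\A\w\|_{\H},
\]
and Gronwall's lemma over $(0,T)$ then yields $\|\mathscr{V}^{\tau}(T)\|^2_{\V}\le\big[\|\v_{\tau}-\widetilde{\v}_0\|^2_{\V}+C\int_0^T\|\f(t+\tau)-\f_{\infty}\|^2_{\H}\,\d t\big]\exp\big(C\int_0^T\widehat{S}(t)\,\d t\big)$.

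To conclude, I would send $\tau\to-\infty$: the bracketed factor tends to $0$ because $\|\v_{\tau}-\widetilde{\v}_0\|^2_{\V}\to0$ by assumption and $\int_0^T\|\f(t+\tau)-\f_{\infty}\|^2_{\H}\,\d t\le\int_{-\infty}^{\tau+T}\|\f(t)-\f_{\infty}\|^2_{\H}\,\d t\to0$ by Hypothesis~\ref{Hypo_f-N}, while the exponential stays bounded. This last fact is the only delicate point, and it is exactly where Proposition~\ref{Back_conver-N} is needed. The contributions $\|\w\|^4_{\V}$ and $\|\w\|_{\V}\|\A\w\|_{\H}$ are integrable over $(0,T)$ because $\widetilde{\v}\in\mathrm{C}([0,T];\V)\cap\mathrm{L}^2(0,T;\D(\A))$, $\h\in\D(\A)$ and $z(\theta_{\cdot}\omega)$ is continuous; for the genuinely nonlinear contribution I would use $\int_0^T\|\mathscr{V}^{\tau}(t)\|^2_{\H}\|\mathscr{V}^{\tau}(t)\|^2_{\V}\,\d t\le\big(\sup_{t\in[0,T]}\|\mathscr{V}^{\tau}(t)\|^2_{\H}\big)\int_0^T\|\mathscr{V}^{\tau}(t)\|^2_{\V}\,\d t$, which is bounded (indeed tends to $0$) since the $\H$-norm of $\mathscr{V}^{\tau}$ is bounded on $[0,T]$ uniformly in $\tau\to-\infty$ (from the Gronwall estimate inside the proof of Proposition~\ref{Back_conver-N}) and $\int_0^T\|\mathscr{V}^{\tau}(t)\|^2_{\V}\,\d t\to0$ by \eqref{BC9-A}. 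Hence $\int_0^T\widehat{S}(t)\,\d t$ is uniformly bounded as $\tau\to-\infty$, and the claim follows.

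The step I expect to be the \emph{main obstacle} is precisely this coupling: the $\mathbb{V}$-differential inequality does not close on its own, since the Gronwall coefficient $\widehat{S}$ carries the factor $\|\mathscr{V}^{\tau}\|^2_{\H}\|\mathscr{V}^{\tau}\|^2_{\V}$, for which there is no a priori control; one must feed in the already-proven $\H$-convergence, namely both the uniform $\H$-bound of $\mathscr{V}^{\tau}$ on $[0,T]$ and its $\mathrm{L}^2(0,T;\V)$-decay. Everything else is a routine transcription of \eqref{BC4-V}--\eqref{BC9-V} with $\widetilde{\v}$ replaced by $\w=\widetilde{\v}+\h z(\theta_{t}\omega)$; in particular Hypothesis~\ref{AonH-N} plays no role here, as it is needed only for the absorbing-set (backward temperedness) estimates and not for the backward convergence.
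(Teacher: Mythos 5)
Your proposal is correct and follows essentially the same route as the paper's proof: the same decomposition $\B(\v+\h z)-\B(\widetilde{\v}+\h z)=\B(\mathscr{V}^{\tau},\mathscr{V}^{\tau})+\B(\mathscr{V}^{\tau},\w)+\B(\w,\mathscr{V}^{\tau})$, the same $\eqref{b2}$-based estimates after pairing with $\A\mathscr{V}^{\tau}$, and the same Gronwall argument in which the coefficient involving $\|\mathscr{V}^{\tau}\|^2_{\H}\|\mathscr{V}^{\tau}\|^2_{\V}$ is controlled by feeding in the $\H$-convergence \eqref{BC-A} and the $\mathrm{L}^2(0,T;\V)$-decay \eqref{BC9-A} from Proposition \ref{Back_conver-N}. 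Your observation that Hypothesis \ref{AonH-N} is not used here is also consistent with the paper.
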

\begin{proof}
	Taking the inner product with $\A\mathscr{V}^{\tau}(\cdot)$ in \eqref{BC1-A}, we get
	\begin{align}\label{BC2-VA}
		\frac{1}{2}\frac{\d }{\d t}\|\mathscr{V}^{\tau}\|^2_{\V}&=-\nu\|\A\mathscr{V}^{\tau}\|^2_{\H}-b(\mathscr{V}^{\tau},\mathscr{V}^{\tau},\A\mathscr{V}^{\tau})-b(\mathscr{V}^{\tau},\widetilde{\v}+\h z(\theta_t\omega),\A\mathscr{V}^{\tau})\nonumber\\&\quad-b(\widetilde{\v}+\h z(\theta_t\omega),\mathscr{V}^{\tau},\A\mathscr{V}^{\tau})+(\f(t+\tau)-\f_{\infty},\A\mathscr{V}^{\tau}).
	\end{align}
	Using \eqref{b2}, H\"older's and Young's inequalities, we achieve
	\begin{align}
		\left|b(\mathscr{V}^{\tau},\mathscr{V}^{\tau},\A\mathscr{V}^{\tau})\right|&\leq \|\mathscr{V}^{\tau}\|^{\frac{1}{2}}_{\H}\|\mathscr{V}^{\tau}\|_{\V}\|\A\mathscr{V}^{\tau}\|^{\frac{3}{2}}_{\H}\nonumber\\&\leq C\|\mathscr{V}^{\tau}\|^2_{\H}\|\mathscr{V}^{\tau}\|^4_{\V}+\frac{\nu}{8}\|\A\mathscr{V}^{\tau}\|^2_{\V},\label{BC4-VA}\\
		\left|b(\widetilde{\v}+\h z(\theta_t\omega),\mathscr{V}^{\tau},\A\mathscr{V}^{\tau})\right|&\leq C\|\widetilde{\v}+\h z(\theta_t\omega)\|^{\frac{1}{2}}_{\H}\|\widetilde{\v}+\h z(\theta_t\omega)\|^{\frac{1}{2}}_{\V}\|\mathscr{V}^{\tau}\|^{\frac{1}{2}}_{\V}\|\A\mathscr{V}^{\tau}\|^{3/2}_{\H}\nonumber\\&\leq C \|\widetilde{\v}+\h z(\theta_t\omega)\|^{4}_{\V}\|\mathscr{V}^{\tau}\|^{2}_{\V}+\frac{\nu}{8}\|\A\mathscr{V}^{\tau}\|^{2}_{\H},\label{BC5-VA}\\
		\left|b(\mathscr{V}^{\tau},\widetilde{\v}+\h z(\theta_t\omega),\A\mathscr{V}^{\tau})\right|&\leq C\|\mathscr{V}^{\tau}\|^{\frac{1}{2}}_{\H}\|\mathscr{V}^{\tau}\|^{\frac{1}{2}}_{\V}\|\widetilde{\v}+\h z(\theta_t\omega)\|^{\frac{1}{2}}_{\V}\|\A(\widetilde{\v}+\h z(\theta_t\omega))\|^{\frac{1}{2}}_{\H}\|\A\mathscr{V}^{\tau}\|_{\H}\nonumber\\&\leq C \|\A(\widetilde{\v}+\h z(\theta_t\omega))\|^{2}_{\H}\|\mathscr{V}^{\tau}\|^{2}_{\V}+\frac{\nu}{8}\|\A\mathscr{V}^{\tau}\|^{2}_{\H},\label{BC6-VA}
	\end{align}
	and
	\begin{align}\label{BC7-VA}
	\hspace{-40mm}	\left|(\f(t+\tau)-\f_{\infty},\A\mathscr{V}^{\tau})\right|\leq C\|\f(t+\tau)-\f_{\infty}\|^2_{\H}+\frac{\nu}{8}\|\A\mathscr{V}^{\tau}\|^2_{\H}.
	\end{align}
	Combining \eqref{BC2-VA}-\eqref{BC7-VA}, we obtain
	\begin{align*}
		\frac{\d }{\d t}\|\mathscr{V}^{\tau}\|^2_{\H}\leq C\big[
		\widehat{S}_2(t)\|\mathscr{V}^{\tau}\|^2_{\H}+\|\f(t+\tau)-\f_{\infty}\|^2_{\H}\big],
	\end{align*}
	where
	\begin{align*}
		\widehat{S}_2(t)=C\|\mathscr{V}^{\tau}\|^2_{\H}\|\mathscr{V}^{\tau}\|^2_{\V}+\|\widetilde{\v}+\h z(\theta_{t}\omega)\|^4_{\V}+\|\A(\widetilde{\v}+\h z(\theta_{t}\omega))\|^2_{\V}.
	\end{align*}
	Making use of Gronwall's inequality in \eqref{BC6} over $(0,T)$, we get
	\begin{align}\label{BC9-VA}
		\|\mathscr{V}^{\tau}(T)\|^2_{\V}\leq \left[\|\mathscr{V}^{\tau}(0)\|^2_{\V}+C\int_{0}^{T}\|\f(t+\tau)-\f_{\infty}\|^2_{\H} \d t\right]e^{C\int_{0}^{T}\widehat{S}_2(t)\d t}.
	\end{align}
	It implies from $\h\in\D(\A)$, continuity of $z$, $\widetilde{\v}\in\mathrm{C}([0,T];\V)\cap\mathrm{L}^2(0,T;\D(\A))$, convergences \eqref{BC-A} and \eqref{BC9-A} that
	\begin{align}\label{BC10-VA}
		\int_{0}^{T}\widehat{S}_{2}(t)\d t<+\infty \ \text{ as } \ \tau\to-\infty.
	\end{align}	
	Using \eqref{BC10-VA}, \eqref{BC9} and $\|\mathscr{V}^{\tau}(0)\|^2_{\V}=\|\v_{\tau}-\widetilde{\v}_0\|_{\V}\to0$ as $\tau\to-\infty$ in \eqref{BC9-VA}, we complete the proof.
\end{proof}

\subsubsection{Increasing random absorbing sets}
This subsection provides the existence of increasing ${\mathfrak{D}}$-random absorbing set for non-autonomous stochastic NSE. We are only stating the following two propositions which show the existence of increasing $\mathfrak{D}$-random absorbing sets in $\H$ as well as in $\V$. For  proofs of the following two propositions, we refer readers to \cite{LXY}.

\begin{proposition}\label{IRAS-N}
	Suppose that $\f\in\mathrm{L}^2_{\mathrm{loc}}(\R;\H)$ and Hypothesis \ref{AonH-N} is satisfied. Then, there is an increasing random absorbing set $\mathcal{R}_{\H}\in\mathfrak{D}$ given by
	\begin{align}\label{IRAS1-N}
		\mathcal{R}_{\H}(\tau,\omega)=\left\{\u\in\H:\|\u\|^2_{\H}\leq R_{\H}(\tau,\omega)\right\}, \text{ for all } \tau\in\R,
	\end{align}
where
	\begin{align}\label{R1}
	R_{\H}(\tau,\omega)&= 2+ C\sup_{s\leq \tau}\int_{-\infty}^{0}e^{\nu\lambda\uprho-4{\aleph}\int^{\uprho}_{0}\left|z(\theta_{\upeta}\omega)\right|\d\upeta}\|\f(\uprho+s)\|^2_{\H}\d\uprho\nonumber\\&\quad\quad+C\int_{-\infty}^{0}e^{\nu\lambda\uprho-4{\aleph}\int^{\uprho}_{0}\left|z(\theta_{\upeta}\omega)\right|\d\upeta}\bigg\{\left|z(\theta_{\uprho}\omega)\right|^2+\left|z(\theta_{\uprho}\omega)\right|^{3}\bigg\}\d\uprho+2\|\h\|^2_{\H}\left|z(\omega)\right|^2\nonumber\\&\quad=: 2+\rho_1(\tau,\omega)+2\|\h\|^2_{\H}\left|z(\omega)\right|^2.
\end{align}
\end{proposition}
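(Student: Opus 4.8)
The plan is to follow the scheme of Lemma~\ref{Absorbing} and Proposition~\ref{IRAS}, but now for the transformed additive system \eqref{CNSE-A}, with Hypothesis~\ref{AonH-N} playing the role that \eqref{b1} played in the multiplicative case. First I would take the $\H$-inner product of the first equation in \eqref{CNSE-A} with $\v(\cdot)$ and rewrite the quadratic term: since $\langle\B(\v+\h z(\theta_{t}\omega)),\v+\h z(\theta_{t}\omega)\rangle=0$, the antisymmetry \eqref{b0} gives
\[
\langle\B(\v+\h z(\theta_{t}\omega)),\v\rangle=-z(\theta_{t}\omega)\,b\big(\v+\h z(\theta_{t}\omega),\v+\h z(\theta_{t}\omega),\h\big)=z(\theta_{t}\omega)\,b\big(\v+\h z(\theta_{t}\omega),\h,\v+\h z(\theta_{t}\omega)\big),
\]
so Hypothesis~\ref{AonH-N} yields $|\langle\B(\v+\h z(\theta_{t}\omega)),\v\rangle|\leq\aleph|z(\theta_{t}\omega)|\,\|\v+\h z(\theta_{t}\omega)\|^{2}_{\H}\leq 2\aleph|z(\theta_{t}\omega)|\,\|\v\|^{2}_{\H}+2\aleph\|\h\|^{2}_{\H}|z(\theta_{t}\omega)|^{3}$. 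The terms $(\f,\v)_{\H}$, $\sigma z(\theta_{t}\omega)(\h,\v)_{\H}$ and $-\nu z(\theta_{t}\omega)(\A\h,\v)_{\H}$ (the last well-defined since $\h\in\D(\A)$) I would bound by Cauchy--Schwarz, the Poincar\'e inequality \eqref{poin} and Young's inequality, producing $C\|\f(t)\|^{2}_{\H}$, $C|z(\theta_{t}\omega)|^{2}$, and absorbable multiples of $\|\v\|^{2}_{\H}$. Collecting everything gives an energy inequality of the form
\[
\frac{\d}{\d t}\|\v(t)\|^{2}_{\H}+\big(\nu\lambda_{1}-4\aleph|z(\theta_{t}\omega)|\big)\|\v(t)\|^{2}_{\H}\leq C\big[\|\f(t)\|^{2}_{\H}+|z(\theta_{t}\omega)|^{2}+|z(\theta_{t}\omega)|^{3}\big].
\]

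Then I would write this along $\v(\zeta)=\v(\zeta,s-t,\theta_{-s}\omega,\v_{0})$, apply the variation-of-constants formula on $(s-t,s)$ and set $\zeta=s$, obtaining for every $s\leq\tau$
\[
\|\v(s,s-t,\theta_{-s}\omega,\v_{0})\|^{2}_{\H}\leq e^{-\nu\lambda_{1}t+4\aleph\int_{-t}^{0}|z(\theta_{\eta}\omega)|\d\eta}\|\v_{0}\|^{2}_{\H}+C\int_{-\infty}^{0}e^{\nu\lambda_{1}\rho-4\aleph\int_{0}^{\rho}|z(\theta_{\eta}\omega)|\d\eta}\Big[\|\f(\rho+s)\|^{2}_{\H}+|z(\theta_{\rho}\omega)|^{2}+|z(\theta_{\rho}\omega)|^{3}\Big]\d\rho.
\]
Taking $\sup_{s\leq\tau}$ bounds the second summand by $\rho_{1}(\tau,\omega)$ of \eqref{R1} (the supremum is needed only in front of the $\f$-integral, since the $z$-integrals are $s$-free). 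For the first summand, $\v_{0}=\u_{0}-\h z(\theta_{-t}\omega)$ with $\u_{0}\in D(s-t,\theta_{-t}\omega)$ and $D\in\mathfrak{D}$ backward tempered, so using \eqref{Z3}--\eqref{Z5} to control $|z(\theta_{-t}\omega)|$ and the growth of $\int_{-t}^{0}|z|$ there is $\mathcal{T}=\mathcal{T}(\tau,\omega,D)\geq2$ with $e^{-\nu\lambda_{1}t+4\aleph\int_{-t}^{0}|z(\theta_{\eta}\omega)|\d\eta}\sup_{s\leq\tau}\|\v_{0}\|^{2}_{\H}\leq1$ for $t\geq\mathcal{T}$. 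Undoing the transformation, $\Phi(t,s-t,\theta_{-t}\omega,\u_{0})=\u(s,s-t,\theta_{-s}\omega,\u_{0})=\v(s,s-t,\theta_{-s}\omega,\v_{0})+\h z(\omega)$, hence $\|\Phi(t,s-t,\theta_{-t}\omega,\u_{0})\|^{2}_{\H}\leq2\|\v(s,\cdots)\|^{2}_{\H}+2\|\h\|^{2}_{\H}|z(\omega)|^{2}\leq R_{\H}(\tau,\omega)$ for all $s\leq\tau$ and $t\geq\mathcal{T}$, which is exactly the absorption of $D$ into $\mathcal{R}_{\H}$.

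Finally I would check $R_{\H}(\tau,\omega)<+\infty$ and the structural properties: the $\f$-integral is finite by the backward temperedness of $\f$ (Lemma~\ref{Hypo_f1-N}) and the $|z|^{2}+|z|^{3}$-integral by the subexponential growth \eqref{Z3} of $|z(\theta_{\rho}\omega)|$, both needing the weight $e^{\nu\lambda_{1}\rho-4\aleph\int_{0}^{\rho}|z(\theta_{\eta}\omega)|\d\eta}$ to decay as $\rho\to-\infty$ — which holds once $\sigma$ is fixed large enough (since $\tfrac{1}{|\rho|}\int_{\rho}^{0}|z(\theta_{\eta}\omega)|\d\eta\to(\pi\sigma)^{-1/2}$ by the ergodic theorem and \eqref{Z2}, the condition $\nu\lambda_{1}>4\aleph(\pi\sigma)^{-1/2}$ suffices), or else the small-noise regime of \cite{LXY} is invoked. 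Since $\tau\mapsto\sup_{s\leq\tau}(\cdots)$ is nondecreasing, $\mathcal{R}_{\H}(\tau,\omega)$ increases in $\tau$; being an increasing tempered random set it is backward tempered, so $\mathcal{R}_{\H}\in\mathfrak{D}$. The $\mathscr{F}$-measurability of $\mathcal{R}_{\H}$ I would settle exactly as in Proposition~\ref{IRAS}, via the Egoroff and Lusin theorems following Proposition~3.1 of \cite{YR}. The hard part is really this last bundle of issues — securing the decay of the exponential weight (hence the size restriction on $\sigma$ in place of the small-noise assumption of \cite{LXY}) and the measurability of the sup-type absorbing radius; everything else is routine and parallels Lemma~\ref{Absorbing}.
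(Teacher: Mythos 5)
Your derivation is correct, but note that the paper does not actually prove this proposition: its ``proof'' is a one-line citation to Lemmas 4.1 and 5.1 of \cite{LXY}. What you have written is the natural self-contained argument that the citation hides, obtained by transplanting the scheme of Lemma \ref{Absorbing} and Proposition \ref{IRAS} to the additive system \eqref{CNSE-A}. You correctly isolate the two points where the additive case genuinely differs from the multiplicative one: (i) the nonlinear term no longer cancels in the energy identity, and your rewriting $\langle\B(\v+\h z),\v\rangle=z\,b(\v+\h z,\h,\v+\h z)$ followed by Hypothesis \ref{AonH-N} is exactly the substitute for the cancellation \eqref{b0}, and is the reason that hypothesis appears in the paper at all; and (ii) the weight $e^{\nu\lambda_1\uprho+4\aleph\int_{\uprho}^{0}|z(\theta_{\upeta}\omega)|\d\upeta}$ must decay as $\uprho\to-\infty$, which forces either the small-noise regime of \cite{LXY} or a largeness condition on $\sigma$; your condition $\nu\lambda_1>4\aleph(\pi\sigma)^{-1/2}$ is consistent with the condition $\sigma\geq 36\aleph^2/(\pi\nu^2\lambda_1^2)$, equivalently $4\aleph\mathbb{E}(|z|)\leq\tfrac{2}{3}\nu\lambda_1$, which the paper only imposes later, in \eqref{Z6} inside the proof of Theorem \ref{MT1-N}. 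One small sharpening: for $\mathcal{R}_{\H}\in\mathfrak{D}$ you need not merely finiteness of $\rho_1(\tau,\omega)$ but the decay $e^{-\frac{\nu\lambda_1}{3}t}\rho_1(\tau,\theta_{-t}\omega)\to0$ as $t\to+\infty$, which is precisely the computation \eqref{AV1}; your appeal to ``increasing tempered implies backward tempered'' should be understood as resting on that estimate, and it goes through under the same restriction on $\sigma$. The remaining steps --- variation of constants, the splitting $\|\u\|^2_{\H}\leq2\|\v\|^2_{\H}+2\|\h\|^2_{\H}|z(\omega)|^2$ that produces the three summands of \eqref{R1}, and the Egoroff--Lusin treatment of measurability of the $\sup_{s\leq\tau}$ radius --- parallel the multiplicative-noise case exactly as you say.
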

\begin{proof}
See the proof of Lemmas 4.1 and 5.1 in \cite{LXY}.
\end{proof}

\begin{proposition}\label{IRAS-NV}
	Suppose that $\f\in\mathrm{L}^2_{\mathrm{loc}}(\R;\H)$ and Hypothesis \ref{AonH-N} is satisfied. Then, there is an increasing random absorbing set $\mathcal{R}_{\V}$ given by
	\begin{align}\label{IRAS1-NV}
		\mathcal{R}_{\V}(\tau,\omega)=\left\{\u\in\H:\|\u\|^2_{\H}\leq R_{\V}(\tau,\omega)\right\}, \text{ for all } \tau\in\R,
	\end{align}
where
\begin{align}\label{R2}
	R_{\V}(\tau,\omega)&=2+Ce^{\sup\limits_{\xi\in[-2,0]}\left[\rho_1(\tau,\omega)+|z(\theta_{\xi}\omega)|^2\right]\rho_2(\tau,\omega)}\bigg[\rho_2(\tau,\omega)+C\sup_{\xi\in[-2,0]}\big\{\rho_1(\tau,\omega)\left|z(\theta_{\xi}\omega)\right|^4\nonumber\\&\quad\quad+\left|z(\theta_{\xi}\omega)\right|^6+\rho_1(\tau,\omega)+1\big\}\bigg]+2\|\h\|^2_{\V}\left|z(\omega)\right|^2,
\end{align}
and
\begin{align}\label{R3}
	\rho_2(\tau,\omega)=C\rho_1(\tau,\omega)+C\rho_1(\tau,\omega)\int_{-2}^{0}\left|z(\theta_{\xi}\omega)\right|\d\xi+C\int_{-2}^{0}\left\{\left|z(\theta_{\xi}\omega)\right|^2+\left|z(\theta_{\xi}\omega)\right|^4\right\}\d\xi.
\end{align}
\end{proposition}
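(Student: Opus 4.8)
The proof, carried out in detail in \cite{LXY}, rests on a $\V$-energy estimate for the transformed system \eqref{CNSE-A} bootstrapped from the $\H$-estimates underlying Proposition \ref{IRAS-N}. The plan is to take the $\H$-inner product of the first equation of \eqref{CNSE-A} with $\A\v$, which gives
\[
\frac{1}{2}\frac{\d}{\d t}\|\v\|^2_{\V}+\nu\|\A\v\|^2_{\H}=-\langle\B(\v+\h z(\theta_t\omega)),\A\v\rangle+(\f,\A\v)+\sigma z(\theta_t\omega)(\h,\A\v)-\nu z(\theta_t\omega)(\A\h,\A\v).
\]
The last three terms are controlled by Cauchy--Schwarz and Young's inequality, using $\h\in\D(\A)$, so that a multiple of $\|\A\v\|^2_{\H}$ is absorbed on the left and there remain contributions of the form $Ce^{2|z(\theta_t\omega)|}\|\f\|^2_{\H}$ and $C(|z(\theta_t\omega)|^2+|z(\theta_t\omega)|^4+1)$. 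For the nonlinear term I would expand $\B(\v+\h z)=\B(\v,\v)+z\,\B(\v,\h)+z\,\B(\h,\v)+z^2\,\B(\h,\h)$ and apply \eqref{b2} to each piece, e.g.
\[
|b(\v,\v,\A\v)|\le C\|\v\|^{1/2}_{\H}\|\v\|_{\V}\|\A\v\|^{3/2}_{\H}\le C\|\v\|^2_{\H}\|\v\|^4_{\V}+\frac{\nu}{8}\|\A\v\|^2_{\H},
\]
the $\h$-terms being treated analogously and producing only finitely many powers of $|z(\theta_t\omega)|$. Collecting everything yields
\[
\frac{\d}{\d t}\|\v\|^2_{\V}\le\big(C\|\v\|^2_{\H}\|\v\|^2_{\V}+C\|\v\|^2_{\V}+C|z(\theta_t\omega)|^4+C\big)\|\v\|^2_{\V}+Ce^{2|z(\theta_t\omega)|}\|\f\|^2_{\H}+C(|z(\theta_t\omega)|^2+|z(\theta_t\omega)|^6).
\]

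Next I would apply the uniform Gronwall lemma (Lemma 1.1 in \cite{R.Temam}) on the window $[s-2,s]$. The coefficient multiplying $\|\v\|^2_{\V}$ is integrable over such a window, backward-uniformly in $s\le\tau$: indeed $\int_{s-2}^{s}\|\v(\ell)\|^2_{\V}\,\d\ell$ and $\sup_{\ell\in[s-2,s]}\|\v(\ell)\|^2_{\H}$ are precisely the quantities $\rho_2(\tau,\omega)$ and (a multiple of) $\rho_1(\tau,\omega)+\sup_{\xi\in[-2,0]}|z(\theta_\xi\omega)|^2$ that arise from the $\H$-analysis of Proposition \ref{IRAS-N} and \eqref{R1}--\eqref{R3}, while the same $\int_{s-2}^{s}\|\v(\ell)\|^2_{\V}\,\d\ell$ supplies the integral ``$h$''-term required by the uniform Gronwall lemma. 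Tracking constants, the exponential factor becomes $e^{\sup_{\xi\in[-2,0]}[\rho_1(\tau,\omega)+|z(\theta_\xi\omega)|^2]\rho_2(\tau,\omega)}$ and the additive part assembles into the bracket of \eqref{R2}. Undoing the Ornstein--Uhlenbeck substitution via $\|\u\|^2_{\V}\le2\|\v\|^2_{\V}+2\|\h\|^2_{\V}|z(\omega)|^2$ produces the claimed radius $R_\V(\tau,\omega)$, and the absorption property then follows in the usual way: for $D\in\mathfrak{D}$ and $t\ge\mathcal T(\tau,\omega,D)$ the solution issued from $D(s-t,\theta_{-t}\omega)$ lies in $\mathcal R_\V(\tau,\omega)$ at time $s$ for all $s\le\tau$.

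It remains to check $\mathcal R_\V\in\mathfrak{D}$ and the measurability. Finiteness of $R_\V(\tau,\omega)$ follows from that of $\rho_1(\tau,\omega)$ --- which uses Hypothesis \ref{AonH-N} (to produce the weight $e^{\nu\lambda\uprho-4\aleph\int^{\uprho}_{0}|z(\theta_\upeta\omega)|\d\upeta}$), the backward temperedness of $\f$ in Lemma \ref{Hypo_f1-N}, and \eqref{Z3}--\eqref{Z5} so that this weight dominates the polynomial-in-$|z|$ factors --- together with the continuity of $z$ on $[-2,0]$; monotonicity of $\tau\mapsto R_\V(\tau,\omega)$ is inherited from that of $\tau\mapsto\rho_1(\tau,\omega)$, and an increasing tempered set is automatically backward tempered. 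The measurability of $\omega\mapsto\mathcal R_\V(\tau,\omega)$ is subtle only because of the supremum over the uncountable index set $\{s\le\tau\}$ hidden inside $\rho_1$, and is handled by the Egoroff and Lusin argument of \cite[Proposition 3.1]{YR}. The main obstacle is the cubic coupling $\|\v\|^2_{\H}\|\v\|^4_{\V}$ generated by \eqref{b2}: it cannot be absorbed pointwise, which is exactly why one needs the two-stage scheme (first an $\H$-bound together with an $\mathrm{L}^2$-in-time $\V$-bound, then a uniform Gronwall argument in $\V$), and one must verify that each $\h z(\theta_t\omega)$ cross term contributes only bounded powers of $|z|$ whose integrals against the exponential weights remain finite.
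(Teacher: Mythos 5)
Your proposal is correct and follows essentially the route the paper intends: the paper itself does not reprove this proposition but simply cites Lemmas 4.2 and 5.1 of \cite{LXY}, and your two-stage scheme (the $\H$-bound plus $\mathrm{L}^2$-in-time $\V$-bound feeding a uniform Gronwall estimate obtained from the $\A\v$-energy identity, followed by undoing the Ornstein--Uhlenbeck substitution and the Egoroff--Lusin measurability argument) is exactly the argument carried out there and mirrors the paper's own treatment of the multiplicative case in Lemma \ref{AbsorbingV}. The only small inaccuracy is the weight $e^{2|z(\theta_t\omega)|}$ you attach to $\|\f\|^2_{\H}$, which belongs to the multiplicative-noise computation and is absent in the additive case where the forcing term is simply $(\f,\A\v)$; since $e^{2|z|}\geq 1$ this is still a valid upper bound and does not affect the conclusion.
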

\begin{proof}
	See the proof of Lemmas 4.2 and 5.1 in \cite{LXY}.
\end{proof}

\subsubsection{Asymptotic autonomy of random attractors}
In this subsection, we demonstrate the main result of this section, that is, asymptotic autonomy of backward compact random attractors for the solution of the system \eqref{SNSE-A}. For the existence of a unique random attractor for autonomous and non-autonomous SNSE driven by additive noise on bounded domains in $\H$ as well as $\V$, see \cite{FY} and \cite{LXY}, respectively. Finally, we prove the main result of this section.
\begin{theorem}\label{MT1-N}
	Consider $\X=\H$ or $\V$. Suppose that Hypotheses \ref{Hypo_f-N} and \ref{AonH-N} are satisfied. Then, the backward compact random attractor $\mathscr{A}$ backward converges to $\mathscr{A}_{\infty}$, that is,
	\begin{align}\label{MT2-N}
		\lim_{\tau\to -\infty}\mathrm{dist}_{\X}(\mathscr{A}(\tau,\omega),\mathscr{A}_{\infty}(\omega))=0, \ \emph{ for all }\  \omega\in\Omega,
	\end{align}
	where $\mathscr{A}$ and $\mathscr{A}_{\infty}$ are the random attractor for the systems \eqref{SNSE} and \eqref{A-SNSE}, respectively. Moreover, for any sequence $\tau_{n}\to-\infty$ as $n\to+\infty$, there is a subsequence $\{\tau_{n_k}\}\subseteq\{\tau_{n}\}$  such that
	\begin{align}\label{MT3-N}
		\lim_{k\to +\infty}\mathrm{dist}_{\X}(\mathscr{A}(\tau_{n_k},\theta_{\tau_{n_k}}\omega),\mathscr{A}_{\infty}(\theta_{\tau_{n_k}}\omega))=0\ \emph{ for all }\  \omega\in\Omega.
	\end{align}
\end{theorem}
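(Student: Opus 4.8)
The plan is to apply the abstract Theorem \ref{Abstract-result} of \cite{YR} separately in $\X=\H$ and in $\X=\V$, the two cases being handled in parallel since the structural ingredients are the same. First I would recall that, by Propositions \ref{IRAS-N} and \ref{IRAS-NV}, the NRDS $\varphi$ generated by \eqref{SNSE-A} possesses increasing (hence backward tempered) closed random absorbing sets $\mathcal{R}_\H\in\mathfrak{D}$ and $\mathcal{R}_\V\in\mathfrak{D}$; this verifies hypothesis $(a_1)$ in each space. For hypothesis $(a_2)$, the $\mathfrak{D}$-backward asymptotic compactness in $\H$ follows from the $\V$-absorbing set together with the compact embedding $\V\hookrightarrow\H$ on the bounded domain $\mathcal{O}$, exactly as in the proof of Theorem \ref{MT1}, while in $\V$ one argues via a backward flattening estimate for \eqref{CNSE-A} completely analogous to Lemma \ref{Flattening}. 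Since \cite{LXY} already establishes the existence of the backward compact random attractors $\mathscr{A}$ in both $\H$ and $\V$, one may alternatively invoke that result directly; either way, hypotheses $(a_1)$ and $(a_2)$ are in force, so the genuinely new content of the theorem is the backward convergence \eqref{MT2-N}–\eqref{MT3-N}.

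For the convergence part I would verify hypotheses $(b_1)$ and $(b_2)$. Hypothesis $(b_1)$ — that $\varphi$ backward converges to the autonomous cocycle $\varphi_\infty$ generated by \eqref{A-SNSE} — is precisely the content of Proposition \ref{Back_conver-N} in $\H$ and of Proposition \ref{Back_conver-VA} in $\V$, so nothing further is needed there. It remains to check $(b_2)$: for some $\tau_0<0$ the set $\mathcal{R}_{\X,\tau_0}(\omega):=\cup_{\tau\le\tau_0}\mathcal{R}_\X(\tau,\omega)$ must lie in $\mathfrak{D}_\infty$. Since $\tau\mapsto R_\X(\tau,\omega)$ is increasing, $\mathcal{R}_{\X,\tau_0}(\omega)=\mathcal{R}_\X(\tau_0,\omega)$, so I would estimate $e^{-\frac{\nu\lambda_1}{3}t}R_\X(\tau_0,\theta_{-t}\omega)$ and show it tends to $0$ as $t\to+\infty$. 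Writing out $R_\H(\tau_0,\theta_{-t}\omega)$ from \eqref{R1}, one shifts the Ornstein--Uhlenbeck integrals as in the computation \eqref{K-T} of Theorem \ref{MT1}: the exponential weights $e^{\nu\lambda\uprho-4\aleph\int_0^\uprho|z(\theta_{\upeta-t}\omega)|\d\upeta}$ are controlled using \eqref{Z3}--\eqref{Z4} (which force sub-linear growth of $\int_0^{\cdot}|z|$), the polynomial factors $|z(\theta_{\uprho}\omega)|^2+|z(\theta_{\uprho}\omega)|^3$ and the term $2\|\h\|_\H^2|z(\theta_{-t}\omega)|^2$ are absorbed by $e^{-\frac{\nu\lambda_1}{3}t}$ thanks to \eqref{Z3} (or \eqref{Z5}), and the forcing integral is handled by the backward temperedness of $\f$ from Lemma \ref{Hypo_f1-N}, exactly as in \eqref{K-T}. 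The same reasoning applied to \eqref{R2}--\eqref{R3} gives $\mathcal{R}_{\V,\tau_0}\in\mathfrak{D}_\infty$.

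With $(a_1)$, $(a_2)$, $(b_1)$ and $(b_2)$ all verified, Theorem \ref{Abstract-result} yields that the backward compact random attractor $\mathscr{A}$ in $\X$ backward converges to the autonomous attractor $\mathscr{A}_\infty$, giving \eqref{MT2-N}, and that for any sequence $\tau_n\to-\infty$ there is a subsequence along which \eqref{MT3-N} holds. I expect the main obstacle to be the verification of $(b_2)$: the radii $R_\H$ and $R_\V$ contain nested exponentials of $|z(\theta_\xi\omega)|$ and of the auxiliary quantities $\rho_1,\rho_2$, and one must check carefully that, after the time shift $\omega\mapsto\theta_{-t}\omega$, every such term remains dominated by $e^{\frac{\nu\lambda_1}{3}t}$; this is where the precise sub-exponential estimates \eqref{Z3}, \eqref{Z4}, \eqref{Z5} for the Ornstein--Uhlenbeck process, together with the backward temperedness \eqref{f2-N} of $\f$, are essential.
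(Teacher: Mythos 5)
Your overall architecture is the same as the paper's: invoke Theorem \ref{Abstract-result}, take the existence of the backward compact attractor from \cite{LXY} (so that $(a_1)$--$(a_2)$ are already in force), get $(b_1)$ from Propositions \ref{Back_conver-N} and \ref{Back_conver-VA}, and reduce $(b_2)$ to showing $e^{-\frac{\nu\lambda_1}{3}t}R_{\X}(-1,\theta_{-t}\omega)\to0$ using the monotonicity of $\tau\mapsto R_{\X}(\tau,\omega)$. Up to that point your proposal matches the paper.

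There is, however, a genuine gap in your verification of $(b_2)$, precisely at the step you yourself flag as the main obstacle. You claim that the weights $e^{\nu\lambda\rho-4\aleph\int_{0}^{\rho}|z(\theta_{\eta-t}\omega)|\d\eta}$ in \eqref{R1} are controlled by \eqref{Z3}--\eqref{Z4}, ``which force sub-linear growth of $\int_{0}^{\cdot}|z|$.'' This is false: \eqref{Z4} asserts sublinear growth of $\int_{0}^{t}z(\theta_{\eta}\omega)\d\eta$ (possible only because $z$ is mean-zero), whereas $\int_{\rho}^{0}|z(\theta_{\eta}\omega)|\d\eta$ grows \emph{linearly}, with slope $\mathbb{E}(|z|)=1/\sqrt{\pi\sigma}>0$ by Birkhoff's ergodic theorem and \eqref{Z2}. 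Since for $\rho<0$ one has $-4\aleph\int_{0}^{\rho}|z|\d\eta=+4\aleph\int_{\rho}^{0}|z|\d\eta$, the exponent in $\rho_1$ is $\nu\lambda_1\rho+4\aleph\int_{\rho}^{0}|z|\d\eta\sim(\nu\lambda_1-4\aleph\,\mathbb{E}|z|)\rho$ as $\rho\to-\infty$; without a further restriction this need not tend to $-\infty$, so the integrals defining $\rho_1$ need not converge and the decay of $e^{-\frac{\nu\lambda_1}{3}t}\rho_1(-1,\theta_{-t}\omega)$ cannot be extracted. The paper closes this by choosing the Ornstein--Uhlenbeck parameter $\sigma$ large relative to the constant $\aleph$ of Hypothesis \ref{AonH-N}, namely $\sigma\geq\frac{36\aleph^2}{\pi\nu^2\lambda_1^2}$, which by \eqref{Z2} yields $4\aleph\,\mathbb{E}(|z(\cdot)|)\leq\frac{2\nu\lambda_1}{3}$ (see \eqref{Z6}); only then does the computation \eqref{AV1} produce the margin $e^{\frac{\nu\lambda_1}{3}(\rho-s)}$ inside the integral and the surviving prefactor $e^{-\frac{\nu\lambda_1}{3}t}$. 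Your analogy with \eqref{K-T} is misleading here: in the multiplicative case the exponent involves $2\sigma\int_{\rho}^{0}z\,\d\eta$ without absolute values, so \eqref{Z4} genuinely applies, but in the additive case the absolute value destroys that cancellation and the large-$\sigma$ condition is indispensable. The rest of your argument (the polynomial factors in $z$, the term $2\|\h\|^2_{\H}|z(\omega)|^2$, the forcing integral via Lemma \ref{Hypo_f1-N}, and the reduction of the $\V$-case to the decay of $\rho_1$ through \eqref{R2}--\eqref{R3}) is in line with the paper.
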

\begin{proof}
	In order to complete the proof, we use an abstract result proved in \cite{YR} (see Theorem \ref{Abstract-result}). For this, we establish that the assumptions $(b_1)$ and $(b_2)$ of Theorem \ref{Abstract-result} are verified.
	\vskip 2mm
	\noindent
	\textit{Verification of $(b_1)$ assumption}: It is immediate from Propositions \ref{Back_conver-N} and \ref{Back_conver-VA}  that the assumption $(b_1)$  is satisfied for $\X=\H$ and $\X=\V$, respectively.
	\vskip 2mm
	\noindent
	\textit{Verification of $(b_2)$ assumption}: Finally, it only remains to prove that the assumption $(b_2)$  of Theorem \ref{Abstract-result} is satisfied, that is, $\mathcal{R}^{-1}_{\X}\in\mathfrak{D}_{\infty}$, where $\mathcal{R}^{-1}_{\X}(\omega)=\cup_{\tau\leq-1}\mathcal{R}_{\X}(\tau,\omega)$. Note that  $\mathcal{R}_{\X}(\tau,\omega)$ is increasing in the $\tau$  implies that $\mathcal{R}^{-1}_{\X}(\omega)=\mathcal{R}_{\X}(-1,\omega)$. From the definition \eqref{IRAS1-N} of $\mathcal{R}_{\H}$, we obtain
	\begin{align}\label{AV}
		e^{-\frac{\nu\lambda_{1}}{3}t}\|\mathcal{R}^{-1}_{\H}(\theta_{-t}\omega)\|^2_{\H}&\leq e^{-\frac{\nu\lambda_{1}}{3}t}R_{\H}(-1,\theta_{-t}\omega)\nonumber\\&\leq 2e^{-\frac{\nu\lambda_{1}}{3}t}+ e^{-\frac{\nu\lambda_{1}}{3}t}\rho_1(-1,\theta_{-t}\omega)+2e^{-\frac{\nu\lambda_{1}}{3}t}\|\h\|^2_{\H}|z(\theta_{-t}\omega)|.
	\end{align}
Now, we consider $\sigma$ large enough $\left(\sigma\geq\frac{36\aleph^2}{\pi\nu^2\lambda_{1}^2}\right)$ such that from \eqref{Z2}, we have
\begin{align}\label{Z6}
	4\aleph\mathbb{E}(|\z(\cdot)|)\leq\frac{2\nu\lambda_{1}}{3}.
\end{align}
From \eqref{Z6}, we have
\begin{align}\label{AV1}
&e^{-\frac{\nu\lambda_{1}}{3}t}\rho_1(-1,\theta_{-t}\omega)\nonumber\\&=Ce^{-\frac{\nu\lambda_{1}}{3}t}\sup_{s\leq-1}\int_{-\infty}^{0}e^{\nu\lambda_{1}\uprho-4{\aleph}\int^{\uprho}_{0}\left|z(\theta_{\upeta-t}\omega)\right|\d\upeta}\|\f(\uprho+s)\|^2_{\H}\d\uprho\nonumber\\&\quad+Ce^{-\frac{\nu\lambda_{1}}{3}t}\int_{-\infty}^{0}e^{\nu\lambda\uprho-4{\aleph}\int^{\uprho}_{0}\left|z(\theta_{\upeta-t}\omega)\right|\d\upeta}\bigg\{\left|z(\theta_{\uprho-t}\omega)\right|^2+\left|z(\theta_{\uprho-t}\omega)\right|^{3}\bigg\}\d\uprho\nonumber\\&=Ce^{-\frac{\nu\lambda_{1}}{3}t}e^{\nu\lambda_{1} t-4\aleph\int_{-t}^{0}|z(\theta_{\upeta}\omega)|\d\upeta}\sup_{s\leq-1}\int_{-\infty}^{s}e^{\nu\lambda_{1}(\uprho-s-t)-4{\aleph}\int_{\uprho-s-t}^{0}\left|z(\theta_{\upeta}\omega)\right|\d\upeta}\|\f(\uprho)\|^2_{\H}\d\uprho\nonumber\\&\quad+Ce^{-\frac{\nu\lambda_{1}}{3}t}e^{\nu\lambda_{1} t-4\aleph\int_{-t}^{0}|z(\theta_{\upeta}\omega)|\d\upeta}\int_{-\infty}^{-t}e^{\nu\lambda_{1}\uprho-4{\aleph}\int^{0}_{\uprho}\left|z(\theta_{\upeta}\omega)\right|\d\upeta}\bigg\{\left|z(\theta_{\uprho}\omega)\right|^2+\left|z(\theta_{\uprho}\omega)\right|^{3}\bigg\}\d\uprho\nonumber\\&\leq Ce^{-\frac{\nu\lambda_{1}}{3}t}\sup_{s\leq-1}\int_{-\infty}^{s}e^{\frac{\nu\lambda_{1}}{3}(\uprho-s)}\|\f(\uprho)\|^2_{\H}\d\uprho+C\int_{-\infty}^{-t}e^{\frac{\nu\lambda_{1}}{3}\uprho}\bigg\{\left|z(\theta_{\uprho}\omega)\right|^2+\left|z(\theta_{\uprho}\omega)\right|^{3}\bigg\}\d\uprho\nonumber\\&\to 0\text{ as } t\to+\infty,
\end{align}
	where we have used \eqref{Z5} and \eqref{f2-N}. From \eqref{Z5}, \eqref{AV} and \eqref{AV1}, we get
	\begin{align*}
		e^{-\frac{\nu\lambda_{1}}{3}t}\|\mathcal{R}^{-1}_{\H}(\theta_{-t}\omega)\|^2_{\H}\to 0 \text{ as } t\to +\infty,
	\end{align*}
	which conclude that $\mathcal{R}^{-1}_{\H}\in\mathfrak{D}_{\infty}$. Hence, the convergences \eqref{MT2-N} and \eqref{MT3-N} follow for $\X=\H$ from Theorem \ref{Abstract-result} immediately.
	\vskip2mm
	
	Next, from the definition \eqref{IRAS1-NV} of $\mathcal{R}_{\V}$, we obtain
	\begin{align}\label{K-T1A}
		e^{-\frac{\nu\lambda_{1}}{3}t}\|\mathcal{R}^{-1}_{\V}(\theta_{-t}\omega)\|^2_{\H}\leq e^{-\frac{\nu\lambda_{1}}{3}t}\|\mathcal{R}^{-1}_{\V}(\theta_{-t}\omega)\|^2_{\V}\leq e^{-\frac{\nu\lambda_{1}}{3}t}R_{\V}(-1,\theta_{-t}\omega).
	\end{align}
	It follows from \eqref{AV1} that $e^{-\frac{\nu\lambda_{1}}{3}t}\rho_1(-1,\theta_{-t}\omega)\to0$ as $t\to+\infty$. By the definition of $R_{\V}(\tau,\omega)$ (see \eqref{R2}), we conclude that $e^{-\frac{\nu\lambda_{1}}{3}t}R_{\V}(-1,\theta_{-t}\omega)\to0$ as $t\to+\infty,$ which along with \eqref{K-T1A} implies $\mathcal{R}^{-1}_{\V}\in\mathfrak{D}_{\infty}$. Hence, the convergences \eqref{MT2} and \eqref{MT3} follow for $\X=\V$ from Theorem \ref{Abstract-result} immediately.
\end{proof}

	\medskip\noindent
{\bf Acknowledgments:}    The first author would like to thank the Council of Scientific $\&$ Industrial Research (CSIR), India for financial assistance (File No. 09/143(0938)/2019-EMR-I).  M. T. Mohan would  like to thank the Department of Science and Technology (DST), Govt of India for Innovation in Science Pursuit for Inspired Research (INSPIRE) Faculty Award (IFA17-MA110).

\end{document}